%% file: main.tex
\documentclass[11pt,reqno]{amsart}

\usepackage{
  amssymb,
  cancel, 
  float, 
  graphicx, 
  hyperref,
  mathabx
}

\usepackage[normalem]{ulem}

\usepackage[bottom=3.5cm,left=2.6cm,right=2.6cm,top=3cm]{geometry}

\newtheorem{theorem}{Theorem}
\newtheorem{corollary}[theorem]{Corollary}
\newtheorem{lemma}[theorem]{Lemma}
\newtheorem{proposition}[theorem]{Proposition}
\numberwithin{theorem}{section}

\theoremstyle{definition}
\newtheorem{conjecture}[theorem]{Conjecture}
\newtheorem{definition}[theorem]{Definition}

\newtheorem{remark}[theorem]{Remark}

\def\red{\color{red}} 
\def\blue{\color{blue}}

\newcommand{\CC}{\mathbb{C}}

\newcommand{\Pa}{\operatorname{P}}
\newcommand{\PPa}{\operatorname{PP}}

\newcommand{\Cfr}{\mathfrak{C}}
\newcommand{\Jfr}{\mathfrak{J}}

\newcommand{\Sfr}{\mathfrak{S}}
\newcommand{\Xfr}{\mathfrak{X}}

\newcommand{\Brfr}{\mathfrak{Br}}

\newcommand{\RC}{\mathcal{R}\mathfrak{C}}
\newcommand{\dbrack}[1]{\ldbrack #1\rdbrack}
\newcommand{\dbrace}[1]{\{\kern-0.25em\{#1\}\kern-0.25em\}}

\newcommand{\D}{\mathcal{D}}
\newcommand{\E}{\mathcal{E}}
\newcommand{\G}{\mathcal{G}}
\newcommand{\J}{\mathcal{J}}
\renewcommand{\L}{\mathcal{L}}
\newcommand{\R}{\mathcal{R}}

\newcommand{\BP}{\mathcal{BP}}
\newcommand{\EP}{\mathcal{EP}}

\renewcommand{\H}{\mathcal{H}}
\renewcommand{\P}{\mathcal{P}}

\newcommand{\wF}{\widetilde{F}}
\newcommand{\wG}{\widetilde{G}}

\newcommand{\Par}{\operatorname{Par}}

\newcommand{\End}{\operatorname{End}}

\input{pics/pics.tex}

\begin{document}

\title{Party-Hecke algebras}

\author[Arcis]{Diego Arcis}
\address{Departamento de Matem\'aticas, Universidad de La Serena, Cisternas 1200, 1700000 La Serena, Chile}
\email{diego.arcis@userena.cl}

\author[Juyumaya]{Jes\'us Juyumaya}
\address{Instituto de Matem\'aticas, Universidad de Valpara\'iso, Gran Bretaña 1111, 2340000 Valparaíso, Chile}
\email{juyumaya@gmail.com}

\date{}\keywords{}\subjclass{15A72, 20C08, 20M05, 20M20; 47A67}
\thanks{The first named author acknowledges the financial support of DIDULS/ULS, through
the project PR2553853.}
\setcounter{tocdepth}{1}

\begin{abstract}
Party-Hecke algebras are introduced as a two-parameter deformation of party algebras, where one parameter deforms the  party generators and the other deforms the elementary transpositions. We construct a basis for this algebra and show that it can be realized as a quotient of the algebra of braids and ties. Furthermore, we study the party monoid and its relationship with the tied symmetric monoid and their associated algebras.
\end{abstract}

\maketitle
\tableofcontents

\section{Introduction}

Deformation algebras are central to the classification of algebraic structures, as well as to quantum physics and knot theory, among other fields. One class of deformations consists of those obtained by deforming the centralizer algebras of representations, such as the Hecke and BMW algebras, from which the HOMFLYPT and Kauffman polynomials are derived, respectively. These centralizers are cornerstones of the so-called Schur--Weyl duality, the framework from which the deformed algebra studied here also arises. This duality is a central topic in invariant theory and originates from the duality between the general linear group $G=\operatorname{GL}_n(\CC)$ and the symmetric group $\Sfr_d$ acting on the tensor space $V^{\otimes d}$, where $V=\CC^n$. In this setting, the former acts diagonally and the latter by permuting the tensor factors. These actions commute, and the Schur--Weyl duality theorem (see, for instance, \cite[Theorem 3.4]{CoPr2017}) establishes deep connections between the irreducible representations of $G$ and the representation theory of the symmetric group. In particular, for $n\geq d$, the centralizer $\End_G(V^{\otimes d})$ is the group algebra $\CC[\Sfr_d]$. Later, in \cite{Brauer1937}, Brauer studied the analogous situation where $G$ is replaced by the orthogonal subgroup $O_n$. Specifically, when $n\geq d$, the centralizer $\End_{O_n}(V^{\otimes d})$, denoted by $\Brfr_n(d)$ and called the Brauer algebra, is completely described by transpositions and certain elements which, in diagrammatic language are known now as tangles. In \cite{Jones94}, Jones studied the centralizer $\End_{\Sfr_n}(V^{\otimes d})$, where $\Sfr_n$ is regarded as a subgroup of $G$, and introduced the partition algebra. This algebra was also introduced independently by Martin in \cite{Martin1994}. Regarding $\Sfr_n$ as the complex reflection group $G(1,1,n)$, it is natural to investigate the centralizers of the complex reflection group $G(d,r,n)$. Tanabe studied the centralizer and provided a presentation for it, in which the generators consist of the elementary transpositions and certain party elements (see \cite[Theorem 3.1]{Ta1997}). Subsequently, building on the aforementioned work by Tanabe, Kosuda introduced, for any $\delta\in\CC^\times$, the party algebra $\P_k(\delta)$ in \cite[Definition 1.1]{KoOJM2006}, defining it through a presentation by generators and relations. Moreover, he proved that $\P_k(\delta)$ can be realized as the centralizer $\End_{G(d,1,n)}(V^{\otimes k})$ for $n\geq k$ and $d>k$, see \cite[Propositions 1.2 and 1.3]{KoOJM2006}; in fact, $\P_k{(1)}=\CC[\P_k]$, where $\P_k$ denotes the party monoid, also known as the monoid of uniform block transpositions. Recall that deformations of $\CC[\Sfr_n]$ and $\Brfr_n(d)$ yield the Hecke and BMW algebras, respectively. The key point of these deformations is that the elementary transpositions are deformed in both algebras. For instance, in the Hecke algebra setting, the relation $s^2=1$ is $u$-deformed into $s^2=u+(u-1)s$, leading to non-trivial braid group representations used in the construction of knot invariants. This article introduces and studies a two-parameter deformation of the party algebra, which we call the \emph{Party-Hecke algebra}. In short, the Party-Hecke algebra is a two-parameter deformation in which one parameter deforms the party generators while the other deforms the elementary permutation generators. In what follows, we outline the structure and main results of the article.

Section~\ref{SecPreliminaries} provides the tools used in this paper. In particular, Subsections~\ref{SecSetPartitions}--\ref{SubSecSymmetricBraidGroups} cover the standard background on the monoids used here, namely: the monoid of set partitions $P_n$, the partition monoid $\Cfr_n$, the symmetric group $\Sfr_n$, and the braid group $B_n$. In Subsection~\ref{SubSecRamifiedMon}, we provide the definition of ramified monoids \cite[Section 4]{AiArJu23} \cite{AiArJu24}, and we recall the usual presentation of the ramified monoid $T\Sfr_n=P_n\rtimes\Sfr_n$ of the group $\Sfr_n$ (Subsubsection~\ref{RamifiedSymmetricMonoid}). Subsequently, we recall the definition of the so-called tied braid monoid $TB_n$ (Subsubsection~\ref{SubSubSecTiedBraidMon}) \cite[Section 3]{AiJu16}. Finally, we recall the definition of the Brauer monoid $\Brfr_n$ and its ramified version $\R\Brfr_n$ (Subsubsections~\ref{BrauerSSS}--\ref{SubSubSec244}). Subsection~\ref{SubSecTwiMonAlg} focuses on twisted monoid algebras \cite[Section 3]{Wilcox2007}, noting that the partition algebra can be regarded as a twisted monoid algebra \cite[Section 7]{Wilcox2007}. Finally, in Subsection~\ref{SecIwahoriHeckeBT}, we recall the definition of the two-parameter algebra of braids and ties \cite{AiJu20}.

Section \ref{SecParMonAlg} begins by recalling the definition of the party monoid $\P_n$ (Subsubsection~\ref{subPartyMon}), and in particular, we establish two key results: Proposition~\ref{TSn/R}, which characterizes $\P_n$ as a quotient of the tied symmetric monoid $T\Sfr_n$, and Proposition~\ref{partyNF}, which provides a normal form for the elements of $\P_n$. In Subsubsection~\ref{SubSubSec312}, we recall the definition of the party algebra $\P_n(\delta)$, where $\delta\in\CC^\times$ (\cite[Theorem 1.1]{Kos00}). Next, in Subsubsection \ref{twistedPartyAlgebraS}, we explain how $\P_n(\delta)$ can be regarded as a twisted monoid algebra of $\P_n$, and show how Theorem \ref{TwistedPresentation} recovers its defining presentation. Subsection~\ref{SubSecMaxCell} is devoted to recalling the maximal subgroups of $\P_n$ (Subsubsection \ref{MaxGroupsParty}) and compute those of $T\Sfr_n$ (Proposition \ref{ProMaxSubTSn}), as well as proving the cellularity of $\P_n(\delta)$ (Theorem \ref{Pn(d)cellular}). This cellularity, originally proven in \cite{Kos08}, is obtained here in a different way. More precisely, we utilize the fact that the party algebra is a twisted monoid algebra, as well as \cite[Corollary~6]{Wilcox2007}. The section concludes with two additional subsections, where we introduce two natural algebraic structures that emerge from the previous discussions: the twisted monoid algebra of braids and ties (Subsection~\ref{SubSecTwisBT}) and the party-Brauer-like monoid (Subsection~\ref{SubSecParBrMon}).

Section~\ref{SecParty-Hecke} consists of four subsections. In Subsection~\ref{SubsecParty-Hecke}, we introduce the party braid monoid $\BP_n$, which is a natural \lq party extension\rq\ of the braid group. We define the Party-Hecke algebra in a manner analogous to how the Iwahori--Hecke algebra is defined as a quotient of the group algebra $\CC[B_n]$. Specifically, given $p,q\in\CC^\times$, the Party-Hecke algebra $\Pa_n(p,q)$ is defined as a quotient of $\CC[\BP_n]$ by the ideal generated by the elements\[\sigma_i^2-pq^2-p(p-1)\bar{f}_i,\qquad\sigma_i\bar{f}_i-pq \bar{f}_i,\qquad\bar{f}_i^2-q^2\bar{f}_i,\]where $\sigma_i$ is the elementary braid and $\bar{f}_i$ is the elementary party generator (Definition \ref{Pn(p,q)}). Note that, after a simple rescaling of the generators, it follows that the algebras $\Pa_n(1,q)$ and $\P_n(q)$ coincide. In Subsection~\ref{SubsecParty-HeckeQuot}, we prove that the Party-Hecke algebra can be realized as a quotient of a twisted monoid algebra of the tied braid monoid (Proposition \ref{TBnQuotient}). Subsection~\ref{SubsecBasisParty-Hecke} is dedicated to proving Theorem~\ref{Gnbasis}, which provides a linear basis for $\Pa_n(p,q)$ parametrized by elements of $\P_n$. Moreover, the elements of this linear basis can be factorized into the form $F_IG_s$, where $(I,s)\in P_n\times\Sfr_n$ is a coprime pair (Proposition \ref{partyNF}). The proof of Theorem~\ref{Gnbasis} relies on Proposition~\ref{partyNF}, together with a Jimbo-type tensorial representation for $\Pa_n(p,q)$. Finally, in Subsection \ref{SubsecBasisGenRep}, we prove that the Party-Hecke algebra is generically semisimple.

In Section \ref{SecParty-HeckeQuo}, we prove that the Party-Hecke algebra can be realized as a quotient of the algebra of braids and ties (Proposition~\ref{P-H/bt}). This realization is obtained  via an alternative presentation introduced at the beginning of Subsection~\ref{P'asQuot}. Subsection~\ref{TwoTL} introduces a new presentation for the Party-Hecke algebra, where the braid generators are replaced by idempotents. Motivated by the classical realization of the Temperley--Lieb algebra  as a quotient of the Iwahori--Hecke algebra, we are led to consider two quotients of Temperley--Lieb type which are detailed  in Propositions~\ref{HP/I} and \ref{HP/J}. Furthermore, we define a third quotient of the Party-Hecke algebra of Temperley--Lieb type in the sense of \cite{Juyumaya13,Ry-HaJPAA22}, which appears to be a novel algebraic structure.

Finally, in Section \ref{Future}, we discuss ongoing work in two directions:  the representation theory of $T\Sfr_n$ (Subsection~\ref{RepTSn}) and the  study of a Jones-type invariant for virtual knots (Subsection \ref{Virtual}).

\section{Preliminaries}\label{SecPreliminaries}

Throughout this paper, $d$ and $n$ denote two positive integers. We denote by $\dbrack{a,b}$ the set of integers $x$ such that $a\leq x\leq b$. Furthermore, if a generator's subscript is omitted, it is assumed that this subscript takes any value appropriate for that generator. The remainder of this section provides the preliminaries, which are divided into Subsections \ref{SecSetPartitions} to \ref{SecIwahoriHeckeBT}.

\subsection{Set partitions}\label{SecSetPartitions}
Recall that a \emph{set partition} of a set $A$ is a collection of disjoint nonempty subsets, called \emph{blocks}, such that their union is $A$. The collection of set partitions of $A$ is denoted by $P(A)$. For each nonempty $B\subset A$ we write $f_B$ to denote the set partition whose unique possibly nontrivial block is $B$.

Given $I,J\in P(A)$, we say that $J$ is \emph{coarser} than $I$, or that $I$ is \emph{finer} than $J$, denoted by $I\preceq J$, if each block of $J$ is a union of blocks of $I$. This relation gives $P(A)$ the structure of a lattice with \emph{join operation} $\vee$. Hence, the pair $(P(A),\vee)$ defines an idempotent commutative monoid with identity $(\{a\}\mid a\in A)$, called the \emph{monoid of set partitions} of $A$. For $I,J\in P(A)$, it is usual to write $IJ$ instead of $I\vee J$.

For a subset $B\subset A$ and a set partition $I\in P(A)$, we write $I|_B$ to denote the set partition of $B$ consisting of the nonempty intersections of the blocks of $I$ with $B$. Conversely, if $I\in P(B)$, then it can be regarded as a set partition of $A$ by completing $I$ with the singleton blocks formed by the elements in $A\setminus B$. Accordingly, if $A,B$ are any two sets, and $(I,J)\in P(A)\times P(B)$, we simply write $I\vee J$ to denote the join of $I$ with $J$ regarded as set partitions of $A\cup B$.

As shown in \cite[Theorem 2]{Fi2003}, the monoid of set partitions $P_n:=P(\dbrack{1,n})$ is presented by generators $f_{i,j}:=f_{\{i,j\}}$ with $i<j$, subject to the relations:\begin{gather}f_{i,j}^2=f_{i,j},\qquad f_{i,j}f_{h,k}=f_{h,k}f_{i,j},\qquad f_{i,j}f_{i,k}=f_{i,j}f_{j,k}=f_{i,k}f_{j,k}.\label{PnRels}\end{gather}It is known that $|P_n|$ is the $n$th \emph{Bell number} \cite[\href{https://oeis.org/A000110}{A000110}]{OEIS}. Moreover, the elements of $P_n$ admit the following normal form.
\begin{proposition}[{\cite[Section 3.1]{ArJu2021}}]\label{normalP_n}
Let $I=(I_1,\ldots,I_k)$ be a set partition of $\dbrack{1,n}$. Then $I=f_{I_1}\cdots f_{I_k}$. More precisely, for $B=\{i_1<\cdots<i_k\}\subseteq\dbrack{1,n}$ with $k=|B|>1$, we have:\[f_B=f_{i_1,i_2}\cdots f_{i_{k-1},i_k}.\]
\end{proposition}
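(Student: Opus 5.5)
The plan is to read everything in the monoid $(P_n,\vee)$ and argue with joins of set partitions directly; the presentation \eqref{PnRels} is not needed. Recall that $f_B$ is the set partition whose only possibly nontrivial block is $B$, with all other points singletons.

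\textbf{First claim: $I=f_{I_1}\cdots f_{I_k}$.} The product is the join $f_{I_1}\vee\cdots\vee f_{I_k}$. Each $f_{I_j}$ is finer than $I$, since its only nontrivial block $I_j$ is a block of $I$ and all its other blocks are singletons. Hence the join $J:=f_{I_1}\vee\cdots\vee f_{I_k}$ satisfies $J\preceq I$.

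For the reverse inequality, note that each $I_j$ lies inside a single block of $J$, because $J$ is coarser than $f_{I_j}$. So every block of $I$ is contained in a block of $J$, which means $I\preceq J$. Therefore $J=I$. Singleton blocks $I_j$ give $f_{I_j}=1$, the identity, so they do no harm; if one prefers, they can simply be dropped from the product.

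\textbf{Second claim: $f_B=f_{i_1,i_2}\cdots f_{i_{k-1},i_k}$.} Let $J$ denote the right-hand side. Each factor $f_{i_r,i_{r+1}}$ is finer than $f_B$, so $J\preceq f_B$.

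For $f_B\preceq J$, recall the standard description of the join: two elements lie in the same block of a join exactly when they are connected by a chain in which consecutive elements share a block of one of the factors. The pairs $\{i_r,i_{r+1}\}$ link $i_1,\dots,i_k$ into a path, so all of $B$ lies in one block of $J$. Every element outside $B$ is a singleton in each factor, hence also a singleton in $J$. Thus $J=f_B$.

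\textbf{Main obstacle.} The only point needing care is the chain description of the join in the second claim. I would justify it briefly. The equivalence relation generated by the factor partitions is the finest partition coarser than all of them, so it is exactly their join. Its classes are the equivalence classes of the transitive closure of the union of the factor relations, which gives the chain description. Everything else is unwinding definitions.
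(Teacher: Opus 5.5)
Your proof is correct and complete. For each equality you show that the join is finer than the target partition, because every factor is finer than it. You then show that it is also coarser, because each block of the target is forced into a single block of the join.

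The paper gives no proof of its own here; it only quotes the result from \cite[Section 3.1]{ArJu2021}, so there is no competing route to compare. Your direct verification from the lattice definition of $\vee$ is self-contained. It also does not depend on the presentation \eqref{PnRels}.

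One small simplification in the second claim: you do not need the full chain description of the join. Since $f_{i_r,i_{r+1}}\preceq J$, the points $i_r$ and $i_{r+1}$ lie in a common block of $J$. Transitivity of ``lying in the same block of $J$'' then puts all of $B$ in one block. Points outside $B$ are singletons of $J$ because you already have $J\preceq f_B$.
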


The submonoid $C_n$ of $P_n$, generated by the elements $f_i:=f_{i,i+1}$ for all $i\in\dbrack{1,n-1}$, is called the \emph{monoid of compositions} \cite[Subsection 2.1]{ArEs2024}, and is isomorphic to the free idempotent commutative monoid of rank $n-1$, that is, $C_n$ is presented by generators $f_1,\ldots,f_{n-1}$, subject to the relations:
\begin{gather}
f_i^2=f_i,\qquad f_if_j=f_jf_i.\label{Par1}
\end{gather}

\subsection{The partition monoid}
Write $\Cfr_n:=P(\dbrack{1,2n})$, and let $X=\{x_1,\ldots,x_n\}$ be a set of cardinality $n$ which is disjoint with $\dbrack{1,2n}$. A set partition $I\in\Cfr_n$ is usually represented by a so-called \emph{strands diagram}. This diagram is obtained by placing $n$ top points, representing the elements of $\dbrack{1,n}$, and $n$ bottom points, representing the elements of $\dbrack{n+1,2n}$, which are also labeled from $1$ to $n$ for convenience. The points are connected transitively according to the blocks of $I$. See Figure~\ref{str_diag}.\begin{figure}[H]\figfiv\caption{ Strand diagram of a set partition.}\label{str_diag}\end{figure}

The \emph{concatenation} of two set partitions $I,J\in\Cfr_n$ is the set partition\[I*J:=(I_X\vee J^X)|_{\dbrack{1,2n}},\]where $I_X$ is the set partition obtained from $I$ by replacing each $i\in\dbrack{1,n}$ with $x_i$, and $J^X$ is the set partition obtained from $J$ by replacing each $n+i\in\dbrack{n+1,2n}$ with $x_i$. In terms of diagrams, this concatenation is obtained by identifying the bottom points of $I$ with the top points of $J$. This product gives to $\Cfr_n$ the structure of a monoid with identity $(\{i,n+i\}\mid i\in\dbrack{1,n})$, called the \emph{partition monoid} \cite{Martin1994,Jones94}.

The monoid $P_n$ can be regarded as a submonoid of $\Cfr_n$ by identifying each $f_{i,j}$
with the set partition whose blocks are $\{i,j,n+i,n+j\}$ and $\{k,n+k\}$ for all $k\neq i,j$. See Figure~\ref{diagP_3}.
\begin{figure}[H]
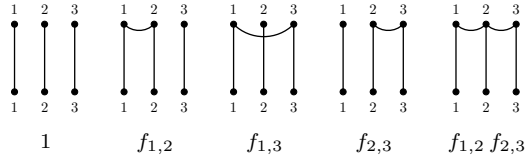

\figone\caption{The 5 elements of $P_3$.}\label{diagP_3}
\end{figure}

\subsection{The symmetric group and the braid group}\label{SubSecSymmetricBraidGroups}

\subsubsection{}As shown in \cite{Moo896}, the {\it symmetric group} $\Sfr_n$, of permutations of $\dbrack{1,n}$, is presented by generators $s_1,\ldots,s_{n-1}$, subject to the following relations:
\begin{gather}
s_i^2=1,\qquad s_is_js_i=s_js_js_i\quad\text{if }|i-j|=1,\qquad s_is_j=s_js_i\quad\text{if }|i-j|>1.\label{TSn1}
\end{gather}
Here, each $s_i$ corresponds to the elementary transposition $(i\,\,i+1)$. It is well known that $\Sfr_n$ can be regarded as a submonoid of $\Cfr_n$ by identifying each $s_i$ with the set partition whose blocks are $\{i,n+i+1\}$, $\{i+1,n+i\}$ and $\{k,n+k\}$ for all $k\neq i,i+1$. Moreover, $\Sfr_n$ is the group of units of the partition monoid $\Cfr_n$. See Figure~\ref{diagS_3}.
\begin{figure}[H]
\figzer\caption{The 6 elements of $\Sfr_3$.}\label{diagS_3}
\end{figure}

By applying \emph{Tietze transformations} \cite[Chapter 3]{Rus1995}, we observe that the symmetric group $\Sfr_n$ can also be presented by generators $s_{i,j}$ with $i,j\in\dbrack{1,n}$ and $i<j$, subject to the following relations:
\begin{gather}
s_{i,j}^2=1,\qquad s_{i,j}s_{j,k}=s_{j,k}s_{i,k}=s_{i,k}s_{i,j}.\label{DualSn1}
\end{gather}
Here, each generator $s_{i,j}$ represents the transposition swapping $i$ with $j$, defined by\[s_{i,j}=s_{j-1}\cdots s_{i+1}s_is_{i+1}\cdots s_{j-1}\]for all $i<j$. One can deduce from \eqref{DualSn1} the following superfluous relations:\begin{gather}
s_{i,j}s_{i,k}=s_{j,k}s_{i,j}=s_{i,k}s_{j,k},\qquad s_{i,j}s_{r,s}=s_{r,s}s_{i,j}\quad\text{if }\{i,j\}\cap\{r,s\}=\emptyset.\label{DualSn2}
\end{gather}

\subsubsection{}As usual we denote by $B_n$ the \emph{braid group} on $n$ strands \cite{Artin1925}, that is, the group presented by generators $\sigma_1,\ldots,\sigma_{n-1}$, subject to the \emph{braid relations}:
\begin{gather}
\sigma_i\sigma_j\sigma_i=\sigma_j\sigma_i\sigma_j\quad\text{if }|i-j|=1,\qquad \sigma_i\sigma_j=\sigma_j\sigma_i\quad\text{if }|i-j|>1.\label{braidRels}
\end{gather}
 Observe that $\Sfr_n$ is the quotient of $B_n$ by the relations $\sigma_i^2=1$. As shown in \cite{BiKoLee1998}, the braid group can also be presented by generators
\begin{equation}
\!\sigma_{i,j}:=\sigma_i\cdots\sigma_{j-2}\sigma_{j-1}\sigma_{j-2}^{-1}\cdots\sigma_i^{-1}\quad\text{for all}\quad i,j\in\dbrack{1,n}\quad\text{with}\quad i<j,\label{genSigma}
\end{equation}
subject to the \emph{dual braid relations}:
\begin{gather}
\sigma_{i,j}\sigma_{j,k}=\sigma_{j,k}\sigma_{i,k}=\sigma_{i,k}\sigma_{i,j}\quad\text{if }i<j<k,\label{dualBraidRels1}\\
\sigma_{i,j}\sigma_{r,s}=\sigma_{r,s}\sigma_{i,j}\quad\text{if }i<j<r<s\text{ or }i<r<s<j.\label{dualBraidRels2}
\end{gather}
The \emph{braid monoid} on $n$ strands \cite{Gar1969} is the submonoid $B_n^+$ of $B_n$ generated by $\sigma_1,\ldots,\sigma_{n-1}$, which is also presented by the braid relations in \eqref{braidRels}.

\subsection{Ramified monoids}\label{SubSecRamifiedMon}

Given a submonoid $M$ of $\Cfr_n$, the \emph{ramified monoid} $\R M$ of $M$ \cite{AiArJu23,AiArJu24} is defined as the submonoid of $M\times \Cfr_n$ consisting of the pairs $(I,J)$ such that $I\preceq J$. Each element $(I,J)\in\R M$ can be represented by a strand diagram, which is obtained by adding to the diagram of $I$ wavy lines between the blocks of $I$ that originate the blocks of $J$.

Notice that the monoid $P_n$ can also be regarded as a submonoid of $\RC_n$. Indeed, $P_n$ is isomorphic to $\R\{1\}$ by means of the map $f_{i,j}\mapsto e_{i,j}$ for all $i<j$, where each $e_{i,j}$ is the ramified set partition $(1,f_{i,j})$ \cite[Proposition 2]{AiArJu24}. See Figure~\ref{eijStrand}. Consequently, $P_n$ is a submonoid of $\R M$, for any submonoid $M\subseteq\Cfr_n$.\begin{figure}[H]
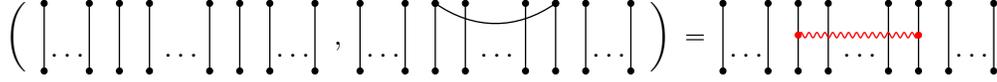
\figele\caption{Generator $(1,f_{i,j})=e_{i,j}.$}\label{eijStrand}\end{figure}

\subsubsection{}\label{RamifiedSymmetricMonoid}The ramified monoid $\R\Sfr_n$ is known to be isomorphic to the so-called \emph{tied symmetric monoid} $T\Sfr_n$ \cite[Theorem 1]{AiArJu23}. As shown in \cite[Subsection 5.1]{AiArJu23}, $T\Sfr_n$ admits a presentation by generators $s_1,\ldots,s_{n-1}$ satisfying \eqref{TSn1}, and generators $e_1,\ldots,e_{n-1}$, subject to the following relations:
\begin{gather}
e_i^2=e_i,\qquad e_ie_j=e_je_i,\label{TSn2}\\
 s_is_je_i=e_js_is_j\quad\!\text{and}\!\quad s_ie_je_i=e_js_ie_j=e_ie_js_i\quad\text{if }|i-j|=1,\qquad\!\!s_ie_j=e_js_i\quad\text{if }|i-j|\neq1.\label{TSn3}
\end{gather}
Note that the relations in \eqref{TSn1} form a closed set of relations between the $s_i$'s, which in fact define the symmetric group $\Sfr_n$ as a subgroup of $T\Sfr_n$. Similarly, the relations in \eqref{TSn2} form a closed set of relations that define the monoid of compositions $C_n$ as a submonoid of $T\Sfr_n$. It is known that $T\Sfr_n=P_n\rtimes\Sfr_n$ \cite[Remark 5]{AiArJu24}, where $\Sfr_n$ acts on $P_n$ by $s(I):=(s(I_1),\ldots,s(I_k))$. Here, $P_n$ is embedded inside $T\Sfr_n$ by means of the map $f_{i,j}\mapsto e_{i,j}$ for all $i,j\in\dbrack{1,n}$ with $i<j$, where:\[e_{i,j}=s_i\cdots s_{j-2}e_{j-1}s_{j-2}\cdots s_i=s_{j-1}\cdots s_{i+1}e_is_{i+1}\cdots s_{j-1}.\]Indeed, applying \cite[Corollary 2]{Lavers1998}, the monoid $T\Sfr_n$ can also be presented by generators $s_1,\ldots,s_{n-1}$, satisfying \eqref{TSn1}, and generators $e_{i,j}$ with $i<j$, subject to the following relations:
\begin{gather}
e_{i,j}^2=e_{i,j},\qquad e_{i,j}e_{r,s}=e_{r,s}e_{i,j},\qquad e_{i,j}e_{i,k}=e_{i,j}e_{j,k}=e_{i,k}e_{j,k},
\label{TSn5}\\
s_ie_{j,k}=e_{s_i(j),s_i(k)}s_i,\quad\text{where}\quad e_{j,i}=e_{i,j}.\label{TSn6}
\end{gather}

\subsubsection{}\label{SubSubSecTiedBraidMon}The \emph{tied braid monoid} \cite[Section 3]{AiJu16} is the monoid $TB_n$ presented by generators $\sigma_1,\ldots,\sigma_{n-1}$ satisfying \eqref{braidRels}, generators $\sigma_1^{-1},\ldots,\sigma_{n-1}^{-1}$, and generators $e_1,\ldots,e_{n-1}$ satisfying \eqref{TSn2}, subject to the following relations:
\begin{gather}
\sigma_i\sigma_i^{-1}=\sigma_i^{-1}\sigma_i=1,\label{TBM1}\\
\sigma_ie_j=e_j\sigma_i\quad\text{if }|i-j|=1,\qquad\sigma_i\sigma_j^{\pm1}e_i=e_j\sigma_i\sigma_j^{\pm1}\quad\text{if }|i-j|=1,\label{TBM2}\\
\sigma_ie_je_i=e_j\sigma_ie_j=e_ie_j\sigma_i\quad\text{if }|i-j|=1.\label{TBM3}
\end{gather}
Observe that $T\Sfr_n$ is the quotient of $TB_n$ by the relations $\sigma_i^2=1$. It is known that $TB_n=P_n\rtimes B_n$ \cite[Theorem 3]{AiJu21}. This implies that each element of $TB_n$ can be uniquely written as a product $e\beta$, where $e\in P_n$ and $\beta\in B_n$. In this context, $P_n$ is embedded inside $TB_n$ by means of the map $f_{i,j}\mapsto e_{i,j}$ for all $i,j\in\dbrack{1,n}$, where
\begin{gather}
e_{i,j}=\sigma_i\cdots\sigma_{j-2}e_{j-1}\sigma_{j-2}^{-1}\cdots\sigma_i^{-1}=\sigma_{j-1}^{-1}\cdots\sigma_{i+1}^{-1}e_i\sigma_{i+1}\cdots\sigma_{j-1}.\label{eijTBn}
\end{gather}
See \cite[Subsection 3.2]{AiJu21} for more details. Indeed, by applying \cite[Corollary 2]{Lavers1998}, the monoid $TB_n$ can also be presented by generators $\sigma_1,\ldots,\sigma_{n-1}$ satisfying \eqref{braidRels}, generators $\sigma_1^{-1},\ldots,\sigma_{n-1}^{-1}$ satisfying \eqref{TBM1}, and generators $e_{i,j}$ with $i<j$ satisfying \eqref{TSn5}, subject to the following relations:
\begin{gather}
\label{actionRelTBn}\sigma_ie_{j,k}=e_{s_i(j),s_i(k)}\sigma_i,\quad\text{where}\quad e_{j,i}=e_{i,j}.
\end{gather}

\subsubsection{}\label{BrauerSSS}
The \emph{Brauer monoid} $\Brfr_n$ is the submonoid of $\Cfr_n$ consisting of the set partitions whose blocks have exactly two elements \cite{Brauer1937}. According to \cite[Theorem 3.1]{KuMaCEJM2006}, the monoid $\Brfr_n$ admits a presentation by generators $s_1,\ldots,s_{n-1}$, $t_1,\ldots,t_{n-1}$ satisfying \eqref{TSn1}, together with the relations:
\begin{gather}
t_i^2=t_i,\qquad t_it_jt_i=t_i\quad \text{if }|i-j|=1,\qquad t_it_j=t_jt_i\quad \text{if }|i-j|>1,\label{Br1}\\
s_it_i=t_i,\qquad\!\!s_jt_i=t_is_j\quad\text{if }|i-j|\neq1,\qquad\!\!s_it_jt_i=s_jt_i\quad\text{and}\quad t_it_js_i=t_is_j\quad \text{if }|i-j|=1.\label{Br2}
\end{gather}
In this context, each $t_i$ corresponds to the set partition whose blocks are $\{i,i+1\}$, $\{n+i,n+i+1\}$ and $\{k,n+k\}$ for all $k\neq i,i+1$. See Figure~\ref{diagTangleGens}.\begin{figure}[H]
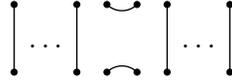
\figsix\caption{Generator $t_i$.}\label{diagTangleGens}\end{figure} The submonoid of $\Brfr_n$ generated by $t_1,\ldots,t_{n-1}$ subject to the relations in \eqref{Br1} is the well-known \emph{Jones monoid} $\Jfr_n$ \cite{TL1971,Jones83}.

\subsubsection{}\label{SubSubSec244}
It was shown in \cite[Theorem 42]{AiArJu23} that the ramified monoid $\R\Brfr_n$ is presented by generators $s_1,\ldots,s_{n-1}$, $e_1,\ldots,e_{n-1}$, $t_1,\ldots, t_{n-1}$ and $d_1,\ldots,d_{n-1}$ satisfying \eqref{TSn1}, \eqref{Br1}--\eqref{Br2}, \eqref{TSn2}--\eqref{TSn3}, and the following relations:
\begin{gather}
d_i^2=d_i,\qquad d_id_j=d_jd_i\quad\text{if }|i-j|>1,\label{RelsRBrn1}\\
e_id_i=d_i,\qquad e_id_j=d_je_i,\qquad e_jd_ie_j=d_id_jd_i\quad\text{if }|i-j|=1,\label{RelsRBrn2}\\
s_id_i=d_i,\qquad s_id_j=d_js_i\quad\text{if }|i-j|\neq1,\qquad s_is_jd_i=d_js_is_j\quad\text{if }|i-j|=1,\label{RelsRBrn3}\\
t_ie_i=t_i,\qquad t_ie_j=e_jt_i\quad\text{if }|i-j|\neq1,\qquad t_id_i=t_i,\qquad t_id_j=d_jt_i\quad\text{if }|i-j|\neq1,\label{RelsRBrn4}\\
e_jt_ie_j=e_jd_i\quad\text{if }|i-j|=1.\label{RelsRBrn5}
\end{gather} Here, each $d_i$ corresponds to the ramified set partition $(t_i,f_i)$. See Figure~\ref{diagTiedTangleGens}.\begin{figure}[H]
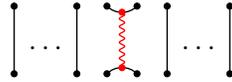
\figttn\caption{Generator $d_i$.}\label{diagTiedTangleGens}\end{figure}
\begin{remark}
A presentation for the ramified monoid $\R\Jfr_n$ is unknown, and finding one remains an open problem.
\end{remark}
\subsection{Twisted monoid algebras and the partition algebra}\label{SubSecTwiMonAlg}

\subsubsection{}A \emph{twisting} from a monoid $M$ into $\CC$ is a map $\tau:M\times M\to\CC$ satisfying $\tau(x,y)\tau(xy,z)=\tau(x,yz)\tau(y,z)$ for all $x,y,z$. The \emph{twisted monoid algebra} of $M$, denoted by $\CC^\tau[M]$, is the $\CC$-vector space spanned by $M$ together with the product $x\cdot y:=\tau(x,y)xy$ for all $x,y\in M$ \cite[Section 3]{Wilcox2007}. As mentioned in \cite{Wilcox2007}, if $N\subset M$ is submonoid, then $\tau$ restricted to $N\times N$ is a twisting from $N$ into $\CC$, and the twisted monoid algebra of $N$ with respect to this twisting is precisely the subalgebra of $\CC^\tau[M]$ spanned by $N$, which we simply denote by $\CC^\tau[N]$.

\begin{theorem}[{\cite[Theorem 44]{East2011}}]\label{TwistedPresentation}
Assume that $M=\langle X\mid R\rangle$. Then, $\CC^\tau[M]=\langle X\mid\hat{R}\rangle$, where $\hat{R}=\{(\tau([v])u,\tau([u])v)\mid(u,v)\in R\}$.
\end{theorem}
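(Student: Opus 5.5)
We prove the statement by building mutually inverse algebra homomorphisms between $\langle X\mid\hat{R}\rangle$ and $\CC^\tau[M]$. Here $[w]$ denotes the element of $M$ represented by a word $w$ over $X$. For a word $w=x_1\cdots x_k$ we set
\[\tau([w]):=\tau(x_1,x_2)\,\tau(x_1x_2,x_3)\cdots\tau(x_1\cdots x_{k-1},x_k),\]
with the convention $\tau([w])=1$ when $k\le 1$. The first step is to check that this scalar depends only on $[w]$ in the sense needed, and more precisely that the product $x_1\cdot x_2\cdots x_k$ in $\CC^\tau[M]$ equals $\tau([w])\,[w]$. This follows by induction on $k$, using only the definition $x\cdot y=\tau(x,y)xy$. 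The cocycle identity $\tau(x,y)\tau(xy,z)=\tau(x,yz)\tau(y,z)$ shows that any bracketing gives the same scalar, so $\tau([w])$ is well defined as a function of the word. We will also use the identity $\tau([uv])=\tau([u])\tau([v])\tau([u],[v])$.

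Next we define $\Phi:\langle X\mid\hat{R}\rangle\to\CC^\tau[M]$ by $x\mapsto x$. For $(u,v)\in R$ we have $[u]=[v]$, so the images of $\tau([v])u$ and $\tau([u])v$ are $\tau([v])\tau([u])[u]$ and $\tau([u])\tau([v])[v]$, which coincide. Hence $\Phi$ is a well-defined homomorphism. It is surjective because every $m\in M$ is some $[w]$, and then $\Phi(w)=\tau([w])m$. This step requires $\tau([w])\neq0$, so we assume that $\tau$ takes nonzero values; that is the setting of East's theorem, and for the party twisting the values are powers of $\delta\in\CC^\times$.

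For injectivity we show that $\langle X\mid\hat{R}\rangle$ is spanned by one element per monoid element and then compare dimensions. Let $w\sim w'$ mean $[w]=[w']$. Two such words are connected by a chain of elementary moves $aub\to avb$ with $(u,v)\in R$ or $(v,u)\in R$. In the quotient algebra the relation $\tau([v])u=\tau([u])v$ gives
\[\tau([v])\,aub=\tau([u])\,avb.\]
Using the multiplicativity identity for $\tau$ along $a\cdot u\cdot b$, we deduce that $w/\tau([w])$ is invariant under each elementary move, hence under $\sim$. This is where nonvanishing is used again, to divide by $\tau([w])$. Consequently the elements $\tau([w])^{-1}w$, one for each $m\in M$, span the presented algebra. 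On the other hand, $\Phi$ sends these elements to the basis $M$ of $\CC^\tau[M]$, so they are linearly independent and $\Phi$ is an isomorphism.

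The main obstacle is this invariance computation. One must verify that
\[\tau([aub])=\tau([a])\,\tau([u])\,\tau([b])\,\tau([a],[u])\,\tau([a][u],[b]),\]
and the analogous expansion for $avb$, match after the substitution allowed by the relation. This is an iterated application of the cocycle identity, and the point is that the factors involving $a$ and $b$ depend only on $[u]=[v]$. If $\tau$ may vanish, the argument breaks down, and we would instead restrict to the setting of \cite{East2011}, which is all we need for the party algebra.
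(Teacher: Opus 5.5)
The paper does not prove this statement; it imports it from \cite[Theorem 44]{East2011}. Your argument is a correct, self-contained proof. It runs as follows: check that $x\mapsto x$ respects $\hat R$; show that the normalized words $\tau([w])^{-1}w$ depend only on $[w]$, because $\tau([aub])/\tau([avb])=\tau([u])/\tau([v])$ whenever $[u]=[v]$, so one of them per element of $M$ spans the presented algebra; finally note that $\Phi$ sends this spanning set bijectively onto the basis $M$.

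Compared with the bare citation, this makes visible exactly which hypotheses are used. It also works verbatim for infinite monoids such as $TB_n$, to which the paper applies the theorem as well. There, ``compare dimensions'' must be read as ``a spanning set is mapped bijectively onto a basis'', which is what you actually do.

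Two things should be tightened.

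First, $\tau([w])$ does \emph{not} depend only on $[w]$. In $\P_n$ one has $[f_if_i]=[f_i]$ but $\tau_\beta([f_if_i])=\delta\neq1=\tau_\beta([f_i])$, and this is exactly why $\hat R$ differs from $R$. Drop the phrase ``depends only on $[w]$'' and keep only ``function of the word''.

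Second, you also need a normalization hypothesis alongside nonvanishing. Nonvanishing is genuinely necessary and not just an artifact of your method: for $M=\langle a\mid a^3=a^2\rangle$ with $\tau(x,y)=0$ whenever $x,y\neq1$, $\hat R$ is vacuous while $\CC^\tau[M]$ is $3$-dimensional. Beyond that, you silently use $\tau(1,m)=\tau(m,1)=1$. This is what justifies:
\begin{itemize}
\item the convention $\tau([\varepsilon])=1$ for the empty word $\varepsilon$ (the empty product in $\CC^\tau[M]$ is the identity, which is $\tau(1,1)^{-1}1_M$);
\item your product formula when $u$ or $v$ is empty;
\item hence relations with an empty side, such as $s_i^2=1$ or $\sigma_i\sigma_i^{-1}=1$.
\end{itemize}
The normalization cannot be dropped. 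Take $\Sfr_2=\langle s\mid s^2=1\rangle$ with the constant twisting $\tau\equiv\lambda\neq1$. Then $\hat R$ reads $s^2=\lambda$, whereas in $\CC^\tau[\Sfr_2]$ one has $s\cdot s=\lambda\,1_M=\lambda^2\cdot(\lambda^{-1}1_M)$, i.e.\ $s^2$ is $\lambda^2$ times the identity.

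Both hypotheses hold for $\tau_\alpha$ and $\tau_\beta$ with $\delta\in\CC^\times$, so once they are stated explicitly your proof covers every use of the theorem in the paper.
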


\subsubsection{}Let $\delta\in\CC^\times$. The \emph{partition algebra} $\Cfr_n(\delta)$ \cite{Jones94,Martin1994} is the twisted monoid algebra of the monoid $\Cfr_n$ with respect to the twisting $\tau_\alpha(I,J):=\delta^{\,\alpha(I,J)}$, where $\alpha(I,J)$ is the number of blocks in $I_X\vee J^X$ that are contained in $X$ \cite[Section 7]{Wilcox2007}.

\subsubsection{}The \emph{Temperley--Lieb algebra} $TL_n(\delta)$ \cite{TL1971,Jones83} is defined as the twisted monoid algebra associated to the Jones monoid $\Jfr_n$ with respect to $\tau_\alpha$. By Theorem~\ref{TwistedPresentation}, $TL_n(\delta)$ is the subalgebra of $\Cfr_n(\delta)$ presented by generators $t_1,\ldots,t_{n-1}$, known as \emph{tangles}, subject to the following relations:\begin{gather}
t_i^2=\delta t_i,\qquad t_it_jt_i=t_i\quad\text{if }|i-j|=1,\qquad t_it_j=t_jt_i\quad\text{if }|i-j|>1.
\end{gather} Recall that each $t_i$ corresponds to the set partition depicted in Figure~\ref{diagTangleGens}. 

\subsection{The Iwahori--Hecke algebra and the algebra of braids and ties}\label{SecIwahoriHeckeBT}

Let $u,v$ be two indeterminates in $\CC^\times.$

\subsubsection{}The \emph{Iwahori--Hecke algebra} $\H_n(u)$ \cite{Iwahori64} is defined as the quotient of the group algebra $\CC[B_n]$ by the two-sided ideal generated by the elements:\begin{equation}
\sigma_i^2-u-(u-1)\sigma_i.
\end{equation}
If we let $g_i$ be the image of the generator $\sigma_i$ under this quotient, the algebra $\H_n(u)$ admits a presentation by generators $g_1,\ldots,g_{n-1}$, subject to the following relations:\begin{gather}
g_ig_jg_i=g_jg_ig_j\quad\text{for }|i-j|=1,\quad g_ig_j=g_jg_i\quad\text{for }|i-j|>1,\label{heckeRel1}\\ g_i^2=u+(u-1)g_i\label{heckeRel2}
\end{gather}
It is known that $TL_n(\delta)$ with $\delta^2=(u+1)^2u^{-1}$ can be realized as a quotient of $\H_n(u)$. Indeed, by defining $h_i=(u+1)^{-1}(g_i+1)$, the algebra $\H_n(u)$ can be presented by generators $h_1,\ldots,h_{n-1}$, subject to the following relations:\begin{gather}
h_i^2=h_i,\quad h_ih_j=h_jh_i\quad\text{for }|i-j|>1,\label{heckeRel3}\\
h_ih_jh_i-\delta^{-1}h_i=h_jh_ih_j-\delta^{-1}h_j\quad\text{for }|i-j|=1.\label{heckeRel4}
\end{gather}Taking the quotient by the two-sided ideal generated by the elements $h_ih_jh_i-\delta^{-1}h_i$, and defining $t_i=\delta h_i$, we recover the Temperley--Lieb algebra $TL_n(\delta)$.

\subsubsection{}The \emph{algebra of braids and ties} $\E_n(u,v)$ \cite[Section 1]{AiJu20}, or simply the \emph{bt-algebra}, is defined as the quotient of the monoid algebra $\CC[TB_n]$ by the two-sided ideal generated by the elements:\begin{equation}\sigma_i^2-1-(u-1)e_i-(v-1)e_i\sigma_i.\end{equation}Consequently, if we denote by $g_i$ the image of the generator $\sigma_i$ under this quotient, the algebra $\E_n(u,v)$ admits a presentation by generators $g_1,\ldots,g_{n-1}$ satisfying \eqref{heckeRel1}, and generators $e_1,\ldots,e_{n-1}$ satisfying \eqref{TSn2}, subject to the following relations: 
\begin{gather}
g_ie_i=e_ig_i,\quad g_ie_j=e_jg_i\quad\text{for }|i-j|>1,\label{bt3}\\
g_ig_je_i=e_jg_ig_j\quad\text{and}\quad g_ie_je_i=e_jg_ie_j=e_ie_jg_i,\quad\text{for }|i-j|=1,\label{bt4}\\
g_i^2=1+(u-1)e_i+(v-1)e_ig_i.\label{bt5}
\end{gather}
We denote by $\E_n(u)$ the algebra $\E_n(u,u)$, and by $\E_n'(v)$ the algebra $\E_n(u,v)$ with $v^2=u$. Observe that the Iwahori--Hecke algebra $\H_n(u)$ is the quotient of the algebra $\E_n(u)$ by the two-sided ideal generated by the elements $e_i-1$ for all $i\in\dbrack{1,n-1}$.

\section{Party monoids and algebras}\label{SecParMonAlg}

Here, we recall the definitions of the party monoid and the party algebra. We then provide a new proof of the cellularity of the party algebra, by applying a result by Wilcox \cite{Wilcox2007}. This approach relies on viewing the party algebra as a twisted monoid algebra and analyzing its maximal subgroups. Finally, we introduce two new structures: a twisted monoid algebra of the tied braid monoid, and the Party-Brauer-like monoid.

\subsection{The party monoid and the party algebra}

\subsubsection{}\label{subPartyMon}The {\it party monoid} \footnote{Also known as the \emph{monoid of uniform block permutations} \cite[Section 2.2]{AgOr2008}.} $\P_n$ {\cite[Theorem 1.1]{Kos00}} is the submonoid of $\Cfr_n$ presented by generators $s_1,\ldots,s_{n-1}$ satisfying \eqref{TSn1}, and generators $f_1,\ldots,f_{n-1}$ satisfying \eqref{Par1}, both subject to the relations:
\begin{align}
s_if_i=f_is_i=f_i,&\quad s_if_j=f_js_i\quad\text{if $|i-j|>1$},\label{Par2}\\
s_is_jf_i & =f_js_is_j\quad\text{if $|i-j|=1$.}\label{Par3}
\end{align}

As with $T\Sfr_n$, the party monoid $\P_n$ contains a copy of the symmetric group as the submonoid generated by $s_1,\ldots,s_{n-1}$. Moreover, it is known that $\P_n=P_n\Sfr_n=\Sfr_nP_n$ \cite[Subsection~4.2]{AgOr2008} \cite[Subsection~2.5]{OrSaSchZaAl2022}, where $P_n$ coincides with the submonoid of idempotents of $\P_n$. In this context, $P_n$ is regarded as the submonoid generated by $f_{i,j}:=s_{i+1,j}f_is_{i+1,j}$ with $i<j$. More precisely, for every $g\in\P_n$ there exist unique set partitions $f,f'\in P_n$ such that $g=fs=sf'$ for some $s\in\Sfr_n$ \cite[Subsubsection~2.5.3]{OrSaSchZaAl2022}. The cardinality of $\P_n$ corresponds to the sequence \cite[\href{https://oeis.org/A023998}{A023998}]{OEIS}. For instance, it equals $3$, $16$, $131$ and $1496$ for $n=2,3,4$ and $5$, respectively. For further details, we refer the reader to \cite[Subsection 2.3]{OrSaSchZaAl2022}. See Figure~\ref{fig1} and Figure~\ref{diagParty_3}.

\begin{figure}[H]
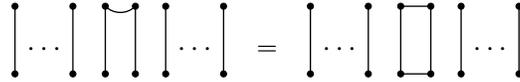
\figfou\caption{Two equivalent diagrammatic representations for $f_i$.}\label{fig1}\end{figure}

\begin{figure}[H]
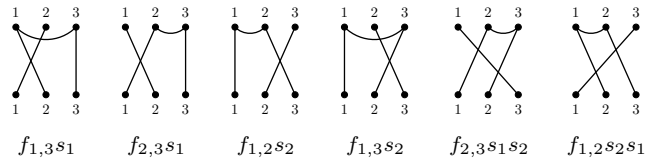
\figtwo\caption{These elements together with those in Figure~\ref{diagS_3} and Figure~\ref{diagP_3} form $\P_3$.}\label{diagParty_3}\end{figure}

\begin{proposition}\label{TSn/R}
$\P_n$ is the quotient $T\Sfr_n/R$, where $R$ is the congruence generated by the relations $s_ie_i=e_i$ for all $i$.
\end{proposition}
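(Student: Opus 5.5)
The comparison is between two presentations. By \eqref{TSn1}--\eqref{TSn3}, $T\Sfr_n$ is generated by $s_i,e_i$. Hence $T\Sfr_n/R$ is presented by the same generators and relations, together with the extra relations $s_ie_i=e_i$. The party monoid $\P_n$ is presented by \eqref{TSn1}, \eqref{Par1}, \eqref{Par2} and \eqref{Par3}. Renaming $e_i\mapsto f_i$, I will show that the two relation sets define the same congruence on the free monoid over $\{s_i,f_i\}$. This is a pair of Tietze-type checks: each relation of one presentation follows from the relations of the other.

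\emph{From $T\Sfr_n/R$ to $\P_n$.} The relations \eqref{TSn1}, \eqref{TSn2}$=$\eqref{Par1} and $s_if_j=f_js_i$ for $|i-j|>1$ are shared. The relation $s_is_jf_i=f_js_is_j$ for $|i-j|=1$ is \eqref{Par3}.

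The case $i=j$ of $s_ie_j=e_js_i$ in \eqref{TSn3} reads $s_if_i=f_is_i$, which follows from \eqref{Par2}.

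It remains to check $s_if_jf_i=f_js_if_j=f_if_js_i$ in $\P_n$ for $|i-j|=1$. By \eqref{Par3} together with $s_i^2=1$, we get $f_js_i=s_is_jf_is_j$. Hence
\[f_js_if_j=s_is_jf_is_jf_j=s_is_jf_if_j,\]
using $s_jf_j=f_j$ from \eqref{Par2}.

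Next I need $s_is_jf_if_j=s_if_jf_i$, that is, $s_jf_if_j=f_jf_i$. Since $f_jf_i=f_if_j$, it is enough that $s_j$ fixes $f_if_j$ from the left. Write $f_if_j=f_jf_i$ and use $s_jf_j=f_j$: then $s_jf_jf_i=f_jf_i$. So $f_js_if_j=s_if_jf_i$.

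Symmetrically, using $f_js_i=s_is_jf_is_j$ and $f_js_j=f_j$, the expression $f_if_js_i$ should reduce to the same element. For this I will use the mirrored form of \eqref{Par3}, namely $f_is_js_i=s_js_if_j$. It follows from \eqref{Par3} by inverting the permutations: with $i,j$ swapped, \eqref{Par3} gives $s_js_if_j=f_is_js_i$.

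\emph{From $\P_n$ to $T\Sfr_n/R$.} The relation $s_if_i=f_i$ is the added relation. The relation $f_is_i=f_i$ follows from $e_is_i=s_ie_i$ in \eqref{TSn3} (case $|i-j|\neq1$ with $j=i$) combined with $s_ie_i=e_i$. The remaining relations \eqref{Par2} and \eqref{Par3} appear verbatim in \eqref{TSn3}.

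Thus the two presentations coincide. Hence the identity on generators induces an isomorphism $T\Sfr_n/R\cong\P_n$, and the quotient map $T\Sfr_n\to\P_n$ is $s_i\mapsto s_i$, $e_i\mapsto f_i$.

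The main obstacle is the middle step: deriving the three-term relation \eqref{TSn3} in $\P_n$. It requires careful bookkeeping with \eqref{Par3}, its mirrored version, and absorption $s_jf_j=f_j$. If the word manipulation becomes awkward, I would instead verify it in the faithful diagram model $\P_n\subset\Cfr_n$. There both sides are equal to the partition joining the strands $i,i+1,i+2$ composed with $s_i$. Since $\P_n$ is given as a submonoid of $\Cfr_n$ with that presentation, this suffices.
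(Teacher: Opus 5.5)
Your proposal is correct and follows the paper's route: the relations of $T\Sfr_n/R$ and of $\P_n$ match up to the three-term relation of \eqref{TSn3}, and that relation is checked to hold in $\P_n$. The paper does this check by showing that all three expressions equal $f_if_j$. The equality you leave as ``should reduce to the same element'' is immediate, since $f_if_js_i=f_jf_is_i=f_jf_i$ by \eqref{Par1} and \eqref{Par2}, so the detour through the mirrored form of \eqref{Par3} is unnecessary.
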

\begin{proof}
Observe that $s_if_if_j=f_if_js_i=f_js_if_j$ holds in $\P_n$ for all $i,j$ with $|i-j|=1$. Indeed, by applying \eqref{Par1} and \eqref{Par2}, for each $|i-j|=1$, we get\[s_if_jf_i=s_if_if_j=f_if_j=f_jf_i=f_jf_is_i=f_if_js_i.\]Now, due to \eqref{Par3} and the equation above, we obtain\[f_js_if_j=f_js_jf_is_js_i=f_jf_is_js_i=f_if_js_js_i=f_if_js_i=f_if_j.\]This shows that $\P_n$ is $T\Sfr_n/R$.
\end{proof}
By applying simple Tietze transformations \cite[Chapter 3]{Rus1995}, we can also present the party monoid $\P_n$ by generators $s_{i,j}$ with $i,j\in\dbrack{1,n-1}$ and $i<j$, satisfying \eqref{DualSn1}, and generators $f_{i,j}$ with $i<j$, satisfying \eqref{PnRels}, both subject to the following relations:\begin{gather}
s_{i,j}f_{h,k}=f_{s_{i,j}(h),s_{i,j}(k)}s_{i,j},\quad\text{where}\quad f_{j,i}=f_{i,j},\label{PartyDual1}\\
f_{i,j}s_{i,j}=f_{i,j}.\label{PartyDual2}
\end{gather}

Since $T\Sfr_n=P_n\rtimes\Sfr_n$, every element of $T\Sfr_n$ can be written uniquely as a product $es$, where $e\in P_n$ and $s\in\Sfr_n$. However, since $f_i=f_is_i$ for all $i\in\dbrack{1,n-1}$, this is not true for $\P_n$, even though $\P_n=P_n\Sfr_n$. Nevertheless, we aim to obtain a normal form similar to the one mentioned above for $T\Sfr_n$.

Recall that for $s,s'\in\Sfr_n$, we say that $s$ is a \emph{left divisor} of $s'$ whenever $s'=st$ with $\ell(s')=\ell(s)+\ell(t)$ for some $t\in\Sfr_n$. Similarly, for $f,f'\in P_n$, we will say that $f$ is a \emph{divisor} of $f'$ if $f'=f'f$. A pair $(f,s')\in P_n\times\Sfr_n$ is called \emph{coprime} if there are no left divisors $s$ of $s'$ such that $fs=f$. 

\begin{proposition}[Normal form]\label{partyNF}
For every $g\in\P_n$ there is a unique coprime pair $(f,s')\in P_n\times\Sfr_n$ such that $fs'=g$.
\end{proposition}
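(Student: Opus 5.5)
We work with the concrete model of $\P_n$ inside $\Cfr_n$: an element $g$ is a uniform block permutation, and by the facts recalled above we may write $g=fs$ with $f\in P_n$ uniquely determined by $g$ (it is the partition of the top points induced by the blocks of $g$), while $s\in\Sfr_n$ is only determined modulo the stabilizer $W_f:=\{t\in\Sfr_n\mid ft=f\}$. The first step is to identify $W_f$. For $t\in\Sfr_n$ we have $ft=f$ in $\P_n$ exactly when $t$ maps each block of $f$ onto itself, so $W_f=\prod_B\Sfr_B$ over the blocks $B$ of $f$. This follows either from the diagram or from \eqref{PartyDual1}--\eqref{PartyDual2}, which give $f_Bs_{i,j}=f_B$ for $i,j\in B$. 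Consequently $fs=fs'$ if and only if $s'\in W_fs$, and the possible second factors for $g$ form exactly one right coset $W_fs$.

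The second step reduces coprimality to minimality in this coset. Suppose $(f,s')$ is coprime and $s'=ut$ with $\ell(s')=\ell(u)+\ell(t)$ and $u\in W_f$. Then $fu=f$, so coprimality forces $u=1$. Thus $s'$ has no nontrivial left divisor lying in $W_f$. Conversely, if $s$ is a left divisor of $s'$ with $fs=f$, then $s\in W_f$. Hence coprime means precisely that no nontrivial element of $W_f$ is a length-additive left divisor of $s'$.

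The third step invokes the standard theory of parabolic cosets. $W_f$ is generated by transpositions $s_{i,j}$ with $i,j$ in a common block. These need not be simple reflections, so $W_f$ is a reflection subgroup rather than a standard parabolic subgroup. The plan is to first conjugate. Choose $w\in\Sfr_n$ such that $wfw^{-1}=f'$ has consecutive blocks, so that $W_{f'}$ is a standard parabolic subgroup; alternatively, avoid conjugation and use Dyer's theorem that every coset of a reflection subgroup has a unique element of minimal length. The cleaner route is direct. In the coset $W_fs$, take $s'$ to be the unique element such that, for each block $B$, the values $s'$ sends to $B$ appear in increasing order, that is, $s'^{-1}$ is increasing on $B$. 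Existence and uniqueness are clear combinatorially, since we are just sorting within blocks. We then show that this $s'$ is exactly the element with no nontrivial left divisor in $W_f$. If $u\in W_f\setminus\{1\}$, then $u$ has an inversion $i<j$ in a common block. For the sorted $s'$, the element $u^{-1}s'$ creates inversions rather than removing them, so $\ell(u^{-1}s')>\ell(s')-\ell(u)$ fails to be additive. For any unsorted element, some $s_{i,j}$ with $i,j$ in a block is a length-reducing left factor, and then $u=s_{i,j}$ is a left divisor in $W_f$.

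The main obstacle is this last point. Left divisibility by $s_{i,j}$ for non-adjacent $i,j$ requires care with length additivity. The approach we would try first is the inversion-set criterion: $u$ is a left divisor of $s'$ iff $\mathrm{Inv}(u^{-1})\subseteq\mathrm{Inv}(s'^{-1})$, working on the appropriate side. Using this, we would check that for $u\in W_f$, every inversion of $u^{-1}$ is a pair within a block. Such a pair is an inversion of $s'^{-1}$ exactly when $s'$ is unsorted on that block. This proves both existence and uniqueness of the coprime representative.
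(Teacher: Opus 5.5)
Your first two steps are sound. $W_f=\{t\in\Sfr_n\mid ft=f\}=\prod_B\Sfr_B$, and the admissible second factors of $g$ form the coset $W_fs$. Existence is also fine under your reading of ``coprime'' (no nontrivial length-additive left divisor in $W_f$). The sorted element $s'$ is the unique element of minimal length in $W_fs$, and a factorization $s'=ut$ with $\ell(s')=\ell(u)+\ell(t)$ and $1\neq u\in W_f$ would make $t=u^{-1}s'\in W_fs$ shorter.

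The gap is the converse, which is exactly the step you flagged. From $\ell(s_{i,j}s)<\ell(s)$ with $i,j$ in a common block, you cannot conclude that $s_{i,j}$ is a length-additive left divisor of $s$ once $s_{i,j}$ is not a simple reflection. The check you propose (that every inversion of $u^{-1}$, for $u\in W_f$, is a pair inside a block) is false. For $f=f_{1,3}\in P_3$ we have $W_f=\{1,s_{1,3}\}$, where $s_{1,3}=s_2s_1s_2$ is the longest element and inverts all three pairs.

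This cannot be repaired, because with the length-additive reading uniqueness is simply false. By \eqref{PartyDual2}, $f_{1,3}s_2s_1s_2=f_{1,3}$, hence $f_{1,3}s_2s_1=f_{1,3}s_2$ (this is Figure~\ref{diag_normal}). The only nontrivial element of $W_{f_{1,3}}$ has length $3$, so it is a left divisor of neither $s_2$ nor $s_2s_1$. Thus both $(f_{1,3},s_2)$ and $(f_{1,3},s_2s_1)$ are coprime in your sense. Conjugating to convex blocks does not help either, because left multiplication by $w$ does not preserve lengths. The two notions coincide when $W_f$ is a standard parabolic subgroup, but not in general.

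What is true, and what the paper actually uses, is a different reading of ``coprime'': no $s_{i,j}$ with $i,j$ in a common block of $f$ satisfies $\ell(s_{i,j}s')<\ell(s')$. The paper relies on this in the Remark following the proposition, which identifies inversions $(i,j)$ of $s$ with left divisors $s_{i,j}$. It relies on it again in the proof of Proposition~\ref{spanP-H}, which removes within-block inversions one at a time.

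With that definition, your sorting argument is already the whole proof. In your conventions, $(f,s')$ is coprime if and only if $s'^{-1}$ is increasing on every block of $f$, since transposing an inverted pair strictly lowers the inversion number. Exactly one element of $W_fs$ has this property, so no Dyer-type or inversion-set machinery is needed.

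The paper's own uniqueness step (writing $s'=s_{i_1,j_1}\cdots s_{i_k,j_k}s$ and declaring a contradiction) is equally terse and rests on the same identification. The right move is to state this definition explicitly rather than try to prove the length-additive version.
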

\begin{proof}
Due to Proposition~\ref{TSn/R} and \eqref{PartyDual2}, there is a unique $f\in P_n$, and there is a permutation $s'\in\Sfr_n$, such that $fs'=g$ with $(f,s')$ coprime. Now, suppose there is $s\in\Sfr_n$ such that $g=fs$ and $(f,s)$ is coprime, then $fs=fs'$. Hence, we get $f=fs's^{-1}$, thus, either $s's^{-1}=1$ or $s's^{-1}=s_{i_1,j_1}\cdots s_{i_k,j_k}$, where $f_{i_1,j_1},\ldots,f_{i_k,j_k}$ are all divisors of $f$. This implies that $s'=s_{i_1,j_1}\cdots s_{i_k,j_k}s$, which is a contradiction because $(f,s)$ is coprime. Therefore $s=s'$.
\end{proof}

\begin{remark}
Recall that the length of a permutation $s$ coincides with the number of its inversions. Thus, if $(i,j)$ is an inversion of $s$, then $\ell(s_{i,j}s)<\ell(s)$. See \cite[Subsection 5.8]{Humph1997} for details. Therefore, $(i,j)$ is an inversion of $s$ if and only if $s_{i,j}$ is a proper left divisor of $s$. If $g=fs\in\P_n$ for some $f\in P_n$, then we can get the normal form of $g$ by applying \eqref{PartyDual2} repeatedly. See Figure~\ref{diag_normal}.
\end{remark}
\begin{figure}[H]
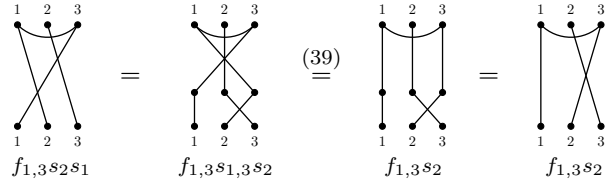
\figthr\caption{ Normal form of $f_{1,3}s_2s_1$.}\label{diag_normal}\end{figure}

\subsubsection{}\label{SubSubSec312}Given $\delta\in\CC^\times$, the \emph{party algebra} $\P_n(\delta)$ \cite{Kos00} is the $\CC$-algebra presented by generators $S_1,\ldots,S_{n-1}$ and $F_1,\ldots,F_{n-1}$ subject to the following relations:
\begin{gather}
S_i^2=1,\qquad S_iS_jS_i=S_jS_jS_i\quad\text{if }|i-j|=1,\qquad S_iS_j=S_jS_i\quad\text{if }|i-j|>1,\label{partyAlgRel1}\\
F_i^2=\delta F_i,\qquad F_iF_j=F_jF_i,\label{partyAlgRel2}\\
S_iF_i=F_iS_i=F_i,\qquad S_iF_j=F_jS_i\quad\text{if }|i-j|>1,\label{partyAlgRel3}\\
S_iS_jF_i=F_jS_iS_j\quad\text{if }|i-j|=1.\label{partyAlgRel4}
\end{gather}
As characterized in \cite[Section 1]{Kos00}, $\P_n(\delta)$ can be regarded as an algebra spanned by the party monoid, where the product is described in terms of set partitions. 
To make this explicit, we introduce some terminology. Following \cite[Section~3]{ArEsFlo2025}, the \emph{arc decomposition} of a finite subset $B=\{q_1<\cdots<q_k\}\subseteq[n]$ is the collection $\hat{B}$ consisting of the arcs $\{q_i,q_{i+1}\}$ for all $i\in[k-1]$. For a set partition $I=(I_1,\ldots,I_k)\in P_n$, we define $\hat{I}=\hat{I}_1\cup\cdots\cup\hat{I}_k$ and call the elements of $\hat{I}$ the \emph{standard arcs} of $I$. Note that $|\hat{B}|=|B|-1$, and therefore\[|\hat{I}|=\sum_{B\in I}\hat{B}=\sum_{B\in I}(|B|-1)=\sum_{B\in I}|B|-|I|=n-|I|.\]Moreover, a pair $\{i,j\}$ with $i<j$ is a standard arc of a set partition $I$ if and only if $e_{i,j}$ is a generator occurring in the normal form of $I$ given in Proposition~\ref{normalP_n}.

\subsubsection{}\label{twistedPartyAlgebraS}
Now, let $g,g'\in\P_n$ be two basis elements of $\P_n(\delta)$. As explained in Subsubsection~\ref{subPartyMon}, there are permutations $s,s'\in\Sfr_n$ and unique set partitions $f,f'\in P_n$ such that $g=sf$ and $g'=f's'$. Then, as described in \cite[Section 1]{Kos00}, their product $gg'$ in $\P_n(\delta)$ is $\delta^{\beta(f,f')}gg'$, where $\beta(f,f')$ denotes the number of common standard arcs of $f$ and $f'$, that is, $\beta(f,f')=|\hat{f}\cap\hat{f}'|$. Equivalently, $\beta(f,f')$ is the number of common generators appearing in the normal forms of $f$ and $f'$. Observe that $\beta(f,f')=\beta(f',f)$ for all $f,f'\in P_n$.

Since the multiplication in $\P_n(\delta)$ is associative, the map $\tau_\beta:\P_n\times\P_n\to\CC$, given by $\tau_\beta(g,g)=\delta^{\beta(f,f')}$, defines a twisting from $\P_n$ into $\CC$. Consequently, the party algebra is precisely the twisted monoid algebra of the party monoid with respect to $\tau_\beta$. By applying Theorem~\ref{TwistedPresentation}, and using the fact that $\beta(f_i,f_i)=1$ for all $i\in[n-1]$, we recover the presentation of $\P_n(\delta)$ given above in terms of the generators of the party monoid.

\begin{remark}
Observe that for any permutations $s,s'\in\Sfr_n\subset\P_n(\delta)$, their set partition components reduce to the identity partition, which consists solely of singleton blocks. Thus, they possess no standard arcs, yielding $\beta(1,1)=0$. Consequently, the twisting restricts to $\tau_\beta(s,s')=1$, which implies that the subalgebra of $\P_n(\delta)$ spanned by $\Sfr_n$ is precisely the group algebra $\CC[\Sfr_n]$.
\end{remark}
\subsection{Maximal subgroups and cellular structure}\label{SubSecMaxCell}
 The main objective of this subsection is to prove that $\P_n(\delta)$ is cellular (Theorem \ref{Pn(d)cellular}). To this end, we first recall the structure of the maximal subgroups of the party monoid $\P_n$ \cite[Subsection 3.1]{OrSaSchZaAl2022} and show how these results extend to determine the maximal subgroups of $T\Sfr_n$. Finally, by leveraging this characterization and applying the framework developed by Wilcox for twisted monoid algebras \cite[Section 5]{Wilcox2007}, we establish the cellularity of the party algebra $\P_n(\delta)$.

Recall that an \emph{inverse monoid} is a monoid $M$ such that for every $g\in M$, there is a unique element $g^*\in M$, called the \emph{inverse} of $g$, satisfying\[gg^*g=g\quad\text{and}\quad g^*gg^*=g^*.\]

If $e$ is an idempotent in $M$, then the subsemigroup $eMe$ forms a monoid with identity $e$. The group of units $G_e:=(eMe)^\times$ is called the \emph{maximal subgroup} of $M$ at $e$. As shown in \cite[Corollary 3.6]{Steinberg2016}, when $M$ is an inverse monoid, one has\[G_e=\{g\in M\mid gg^*=e=g^*g\}.\]

\subsubsection{Maximal subgroups of the party monoid}\label{MaxGroupsParty}

It was shown in \cite[Subsection 4.2]{AgOr2008} and \cite[Proposition 2.8(1)]{OrSaSchZaAl2022} that $\P_n$ is an inverse monoid. Specifically, if $g=fs=sf'\in\P_n$, then its inverse is given by $g^*=s^{-1}f=f's^{-1}$. The mapping $g\mapsto g^*$ defines an anti-involution on $\P_n$; indeed, for any $g=fs$ and $h=f's'$ in $\P_n$, we have $(gh)^*=(fs(f')ss')^*=s'^{-1}s^{-1}fs(f')=s'^{-1}f's^{-1}f=h^*g^*$. This coincides with the anti-involution in \cite[Section 7]{Wilcox2007}, so we have $g^*=(\{k^*\mid k\in B\}\mid B\in g)$, where $k^*:=k+n$ if $k\leq n$ and $k^*:=k-n$ if $k>n$, for all $k\in\dbrack{1,2n}$. See Figure~\ref{inverseElemParty}.
\begin{figure}[H]
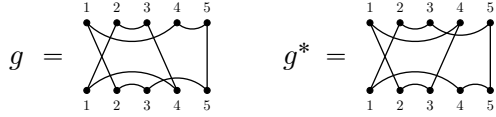
\figsev\caption{An element in $\P_5$ and its inverse.}\label{inverseElemParty}\end{figure}

Given disjoint subsets $A,B\subset\dbrack{1,n}$ with $|A|=|B|$, we denote by $s_{A,B}$ the unique permutation in $\Sfr_n$ that is order-preserving on both $A$ and $B$, and satisfies\begin{equation}s_{A,B}(A)=B,\quad\text{and}\quad s_{A,B}(x)=x\quad\text{for all}\quad x\notin A\cup B.\label{defsAB}\end{equation}Note that, by definition, $s_{A,B}(B)=A$. See Figure~\ref{sABelem}.\begin{figure}[H]
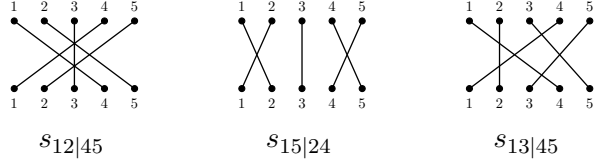
\fignin\caption{Three permutations $s_{A,B}\in\Sfr_5$ with $|A|=|B|=2$.}\label{sABelem}
\end{figure}

Recall that a partition of $n$ is any finite sequence of positive integers, sorted in non-increasing order, such that their sum is $n$. The set of partitions of $n$ is denoted by $\Par_n$.

For a set partition $I\in P_n$, define $\|I\|$ as the partition of block sizes sorted in nonincreasing order, and $\dbrace{I}$ as the set of distinct block sizes. For each $m\in\dbrace{I}$, set $I_{[m]}=\{B\in I\mid|B|=m\}$, and define $\Sfr_{[m]}$ as the subgroup of $\Sfr_n$ generated by the permutations $s_{A,B}$ with $A,B\in I_{[m]}$ satisfying $\min(A)<\min(B)$.

\begin{remark}\label{orbitPartition}
Two set partitions $I,J\in P_n$ belong to the same orbit under the action of $\Sfr_n$ (Subsubsection~\ref{RamifiedSymmetricMonoid}) if and only if $\|I\|=\|J\|$.
\end{remark}

As established in \cite[Proposition 3.2(1) and Corollary 3.3]{OrSaSchZaAl2022}, the maximal subgroup of $\P_n$ at an idempotent $f\in P_n$ is the subgroup $f\Sfr_{[f]}\simeq\Sfr_{[f]}$, where\begin{equation}\label{maxSubParty}\Sfr_{[f]}:=\Sfr_{[m_1]}\times\cdots\times\Sfr_{[m_q]}\simeq\Sfr_{|I_{[m_1]}|}\times\cdots\times\Sfr_{|I_{[m_q]}|},\quad\text{with}\quad\dbrace{f}=\{m_1<\cdots<m_q\}.\end{equation}Moreover, by \cite[Corollary 3.4]{OrSaSchZaAl2022}, the subgroups $\Sfr_{[f]}$ and $\Sfr_{[f']}$ are isomorphic whenever $\|f\|=\|f'\|$, that is, if there is a permutation $s\in\Sfr_n$ such that $\Sfr_{[f]}s=s\Sfr_{[f']}$.

For instance, if $f=(\{1,5\},\{2,6,11\},\{3,4\},\{7,9,12\},\{8,10\})\in P_{12}$, then $\dbrace{I}=\{2,3\}$. Hence $f_{[2]}=\{\{1,5\},\{3,4\},\{8,10\}\}$ and $f_{[3]}=\{\{2,6,11\},\{7,9,12\}\}$, so that $\Sfr_{[f]}\simeq\Sfr_3\times\Sfr_2$.
 
\subsubsection{Maximal subgroups of the tied symmetric monoid}

It was shown in \cite[Corollary 1]{AiArJu24} that $T\Sfr_n$ is an inverse monoid. Specifically, if $g=es$ for some $(e,s)\in P_n\times\Sfr_n$, then the inverse is given by $g^*=s^{-1}e$. See Figure~\ref{inverseElem}. This result, together with Proposition~\ref{TSn/R} and \cite[Corollary 3.5]{Steinberg2016}, provides an alternative proof that $\P_n$ is an inverse monoid.
\begin{figure}[H]\figeig\caption{An element in $T\Sfr_5$ and its inverse.}\label{inverseElem}\end{figure}

\begin{lemma}\label{lemMaxSubTS}
For each $e\in P_n\subset T\Sfr_n$, it holds 
$$G_e=\{es\mid s\in\Sfr_n\text{ and }se=es\}=eC_{\Sfr_n}(e),$$
where $C_{\Sfr_n}(e)$ is the centralizer of $e$ in the subgroup $\Sfr_n$ of $T\Sfr_n$.
\end{lemma}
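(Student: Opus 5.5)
We use the description of maximal subgroups of inverse monoids quoted before the lemma: since $T\Sfr_n$ is inverse \cite[Corollary 1]{AiArJu24}, we have $G_e=\{g\in T\Sfr_n\mid gg^*=e=g^*g\}$, where $(e's)^*=s^{-1}e'$. We also use the semidirect product $T\Sfr_n=P_n\rtimes\Sfr_n$, so that every $g$ is uniquely $g=e's$ with $e'\in P_n$, $s\in\Sfr_n$, and $se''=s(e'')s$ for $e''\in P_n$, where $s(e'')$ is the action of $\Sfr_n$ on $P_n$.

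\emph{Step 1 (the two products).} For $g=e's$ we have
\[gg^*=e'ss^{-1}e'=e'\qquad\text{and}\qquad g^*g=s^{-1}e'e's=s^{-1}e's=s^{-1}(e')\,s^{-1}s=s^{-1}(e'),\]
using idempotency of $e'$ and the action relation. So $g\in G_e$ if and only if $e'=e$ and $s^{-1}(e)=e$, that is, $s(e)=e$.

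\emph{Step 2 (translate to commutation).} The condition $s(e)=e$ is equivalent to $se=s(e)s=es$ in $T\Sfr_n$. Conversely, if $se=es$, then $s(e)s=es$, and uniqueness of the normal form in $P_n\rtimes\Sfr_n$ gives $s(e)=e$. Hence
\[G_e=\{es\mid s\in\Sfr_n,\ se=es\}=eC_{\Sfr_n}(e),\]
which is exactly the statement of Lemma~\ref{lemMaxSubTS}.

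\emph{Main obstacle.} The only delicate point is the uniqueness of the factorization $e's$, which lets us read off $e'=e$ and $s(e)=e$ from equalities in the monoid. In $\P_n$ this uniqueness fails, but in $T\Sfr_n$ it holds by \cite[Remark 5]{AiArJu24}, so we invoke it explicitly.

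As an optional check, one can verify directly that $eC_{\Sfr_n}(e)$ is a group with identity $e$ inside $eT\Sfr_ne$: we have $(es)(et)=e\,s(e)\,st=est$, and the inverse of $es$ is $es^{-1}$. This is consistent with the inverse-monoid characterization but is not needed for the argument.
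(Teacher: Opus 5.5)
Your proof is correct and follows essentially the same route as the paper: write $g=e's$, use $g^*=s^{-1}e'$ to get $gg^*=e'$ and $g^*g=s^{-1}e's$, then read off $e'=e$ and $se=es$. You also spell out the reverse inclusion, which the paper leaves implicit; your appeal to uniqueness of the normal form in Step 2 is harmless but unnecessary, since cancelling the unit $s$ on the right already gives $s(e)=e$.
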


\begin{proof}
Let $g\in T\Sfr_n$ such that $gg^*=e=g^*g$, and write $g=e's$ for some $(e',s)\in P_n\times\Sfr_n$. Then $g^*=s^{-1}e'$, and it follows that $e=gg^*=e'ss^{-1}e'=e'$. Also, $e=g^*g=s^{-1}ees=s^{-1}es$, hence $se=es$. Therefore $G_e=eC_{\Sfr_n}(e)$.
\end{proof}

By Lemma~\ref{lemMaxSubTS}, for any $e\in P_n$, the maximal subgroup at $e$ is given by $G_e=\{es\mid s\in\Sfr_n,\,ses^{-1}=e\}$, that is, $G_e$ consists of elements of the form $es$, where $s$ belongs to the stabilizer of $e$ under the conjugation action of $\Sfr_n$. This action realizes $T\Sfr_n$ as the semidirect product $P_n\rtimes\Sfr_n$.
 
For a subset $X\subseteq\dbrack{1,n}$, we denote by $\Sfr_X$ the subgroup of $\Sfr_n$ generated by the transpositions $s_{i,j}$ with $i,j\in X$ and $i<j$. Note that $\Sfr_X\simeq\Sfr_{|X|}$. More generally, given a set partition $I=(I_1,\ldots,I_k)\in P_n$, define the subgroup\[\Sfr_I=\Sfr_{I_1}\times\cdots\times\Sfr_{I_k}\simeq\Sfr_{|I_1|}\times\cdots\times\Sfr_{|I_k|}.\]
The subgroup $\Sfr_{[I]}$ in \eqref{maxSubParty} acts naturally on the subgroup $\Sfr_I$ by\[s\cdot s_{i,j}=s_{s(i),s(j)}\quad\text{for all}\quad s\in\Sfr_{[I]}.\]More precisely, let $A,B,X\in I$ with $|A|=|B|$ and $\min(A)<\min(B)$, and let $g:A\to B$ be the unique order-preserving bijection between $A$ and $B$. Then, for any $i,j\in X$ with $i<j$,\[s_{A,B}\cdot s_{i,j}=\begin{cases}
s_{g(i),g(j)}&\text{if }X=A\\
s_{g^{-1}(i),g^{-1}(j)}&\text{if }X=B\\
s_{i,j}&\text{otherwise}.
\end{cases}\]Note that the restriction of $s_{A,B}$ to $A$ is $g$, and its restriction to $B$ is $g^{-1}$. See Figure~\ref{maxTSaction}.
\begin{figure}[H]
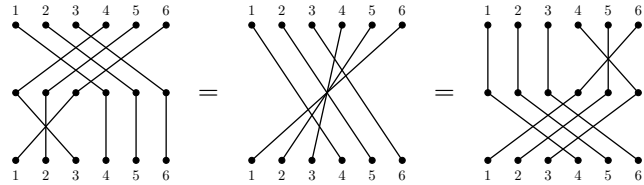
\figten\caption{ $s_{123|456}\cdot s_{1,3}=s_{4,6}\cdot s_{123|456}.$}\label{maxTSaction}\end{figure}

\begin{proposition}\label{ProMaxSubTSn}
The maximal subgroup of $T\Sfr_n$ at an idempotent $e\in P_n$ is $e(\Sfr_e\rtimes\Sfr_{[e]})$.
\end{proposition}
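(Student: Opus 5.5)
By Lemma~\ref{lemMaxSubTS}, we already know $G_e=eC_{\Sfr_n}(e)$. Here $C_{\Sfr_n}(e)$ is the stabilizer of the set partition $e$ under the action $s(I)=(s(I_1),\ldots,s(I_k))$, since $ses^{-1}=s(e)$ in $T\Sfr_n=P_n\rtimes\Sfr_n$. The proof therefore reduces to a purely combinatorial claim about permutations:
\[\operatorname{Stab}_{\Sfr_n}(e)=\Sfr_e\rtimes\Sfr_{[e]},\]
where $\Sfr_e=\Sfr_{e_1}\times\cdots\times\Sfr_{e_k}$ for $e=(e_1,\ldots,e_k)$. Multiplying this equality on the left by $e$ then gives the statement.

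First I would show that both $\Sfr_e$ and $\Sfr_{[e]}$ are contained in the stabilizer. A transposition $s_{i,j}$ with $i,j$ in a common block fixes every block setwise, so $\Sfr_e$ fixes $e$. A generator $s_{A,B}$ of $\Sfr_{[e]}$ with $A,B\in e$ and $|A|=|B|$ swaps $A$ and $B$ and fixes every other point. So it permutes the blocks of $e$ and fixes $e$. Next I would check that $\Sfr_e$ is normal in the stabilizer: for $s$ in the stabilizer, $s s_{i,j}s^{-1}=s_{s(i),s(j)}$, and $s(i),s(j)$ lie in the same block $s(X)$. This conjugation is exactly the action $s\cdot s_{i,j}$ described before the proposition. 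Finally $\Sfr_e\cap\Sfr_{[e]}=1$: by construction each element of $\Sfr_{[e]}$ maps every block $A$ onto some block by the order-preserving bijection, and the only such element fixing all blocks is the identity.

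The remaining step, and the main one, is to show that the stabilizer equals $\Sfr_e\Sfr_{[e]}$. Take $s$ with $s(e)=e$. Then $s$ induces a permutation $\pi$ of the blocks of $e$, and $\pi$ preserves sizes, so $\pi$ decomposes as a product of permutations of each $e_{[m]}$. Since the transpositions $A\leftrightarrow B$ of blocks of equal size generate the full symmetric group on $e_{[m]}$, there is $t\in\Sfr_{[e]}$ inducing the same block permutation $\pi$; here I would use that the $s_{A,B}$ realise these transpositions. Then $t^{-1}s$ fixes every block setwise, so it lies in $\Sfr_e$. Hence $s\in\Sfr_{[e]}\Sfr_e=\Sfr_e\Sfr_{[e]}$, the last equality by normality.

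The subtle point is that $\Sfr_{[e]}$ really is a complement, i.e.\ the subgroup generated by the $s_{A,B}$ maps isomorphically onto $\prod_m\Sfr_{|e_{[m]}|}$. I would establish this by identifying $\Sfr_{[e]}$ with the group of permutations that send each block to a block via the order-preserving bijection. That set is closed under composition, because a composite of order-preserving bijections is order-preserving. It is in bijection with block permutations, and it contains all the $s_{A,B}$, which generate it. Combined with the count $|\operatorname{Stab}(e)|=\prod_B|B|!\cdot\prod_m|e_{[m]}|!$, this confirms the semidirect decomposition.
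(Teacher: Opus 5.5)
Your proposal is correct and follows the paper's route: reduce via Lemma~\ref{lemMaxSubTS} to the stabilizer of $e$ under conjugation, check that the generators of $\Sfr_e$ and $\Sfr_{[e]}$ fix $e$, and show that any stabilizing permutation is a block permutation composed with a within-block permutation. You go further than the paper by verifying normality, the trivial intersection, and that the $s_{A,B}$ generate exactly the order-preserving block permutations, which makes rigorous the semidirect-product decomposition that the paper only asserts.
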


\begin{proof}
By definition, the group $\Sfr_e\rtimes\Sfr_{[e]}$ is generated by two kind of elements. First, the transpositions $s_{i,j}$ with $i,j$ belonging to the same block of $e$ and satisfying $i<j$, which commute with $e$. Second, the elements $s_{A,B}$ for all pair of blocks $A,B\in I$, which, by definition, stabilize the blocks of $e$, that is, $s_{A,B}e=es_{A,B}$. Consequently, by Lemma~\ref{lemMaxSubTS}, we have the inclusion $e(\Sfr_e\rtimes\Sfr_{[e]})\subset G_e$. Conversely, under the conjugation action, the set partition $e$ is stabilized by a permutation if and only if the permutation either acts within blocks or permutes blocks entirely, that is, it belongs to $\Sfr_e\rtimes\Sfr_{[e]}$. Therefore $G_e=e(\Sfr_e\rtimes\Sfr_{[e]})$.
\end{proof}
Proposition~\ref{ProMaxSubTSn} together with Lemma~\ref{lemMaxSubTS} imply that, for any $e\in P_n$, we have $C_{\Sfr_n}(e)=\Sfr_e\rtimes\Sfr_{[e]}$ and $G_e\simeq\Sfr_e\rtimes\Sfr_{[e]}$. For instance, as shown in Subsubsection~\ref{MaxGroupsParty}, if $e=(\{1,5\},\{2,6,11\},\{3,4\},\allowbreak\{7,9,12\},\allowbreak\{8,10\})$, then $\Sfr_{[e]}\simeq\Sfr_3\times\Sfr_2$. Therefore $G_e\simeq(\Sfr_2\times\Sfr_3\times\Sfr_2\times\Sfr_3\times\Sfr_2)\rtimes(\Sfr_3\times\Sfr_2)$. In particular, if $a_1,a_2,a_3\in\Sfr_2$ and $b_1,b_2\in\Sfr_3$, then $(s_2s_1,s_1)(a_1,b_1,a_2,b_2,a_3)=(a_3,b_2,a_1,b_1,a_2)\allowbreak(s_2s_1,s_1)$.

\subsubsection{Cellularity of the party algebra}
The cellularity of the party algebra was originally established in \cite{Kos08}. However, we present here an alternative and streamlined proof by exploiting the structure of $\P_n(\delta)$ as a twisted monoid algebra described in Subsubsection~\ref{twistedPartyAlgebraS}, alongside the characterization of the maximal subgroups of $\P_n$.

Recall that, for $g=fs$ and $g'=f's'$, $\tau_\beta(g,g')=\delta^{\beta(f,f')}$, where $\beta(f,f')$ counts the number of common standard arcs. Our approach relies on the framework developed by Wilcox \cite[Theorem 5]{Wilcox2007}, who showed that a twisted monoid algebra with a twisting taking invertible values is cellular, provided it satisfies five conditions, referred to by him as Assumptions 1--5. However, according to \cite[Corollary 6]{Wilcox2007}, Assumption 4 can be omitted.

We now examine Assumptions 1 and 2, which require the monoid to be equipped with an anti-involution with which the twisting is compatible. To address this, recall from Subsubsection~\ref{MaxGroupsParty} that the monoid $\P_n$ can be equipped with an anti-involution $*:\P_n\to\P_n$ defined by $(fs)^*=s^{-1}f$ for all $s\in\Sfr_n$ and $f\in P_n$. This compatibility is verified in the following lemma.
\begin{lemma}
For every $g,h\in\P_n$, we have $\tau_\beta(g,h)=\tau_\beta(h^*,g^*)$.
\end{lemma}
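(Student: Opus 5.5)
The plan is to write both $g$ and $h$ in the forms used in the definition of $\tau_\beta$, transport these forms through the anti-involution $*$, and then use the symmetry of $\beta$.

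\emph{Step 1: the forms defining $\tau_\beta(g,h)$.} By Subsubsection~\ref{twistedPartyAlgebraS}, the twisting $\tau_\beta(g,h)$ is computed by writing $g=sf$ and $h=f's'$, with $s,s'\in\Sfr_n$ and $f,f'\in P_n$ the unique set partitions attached to $g$ and $h$. Then $\tau_\beta(g,h)=\delta^{\beta(f,f')}$.

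\emph{Step 2: the forms defining $\tau_\beta(h^*,g^*)$.} We apply the anti-involution of Subsubsection~\ref{MaxGroupsParty}. Every set partition is fixed by $*$, since it is idempotent and self-inverse in the inverse monoid $\P_n$. Every permutation satisfies $s^*=s^{-1}$. Hence
\[h^*=s'^{-1}f'\qquad\text{and}\qquad g^*=fs^{-1}.\]
So $h^*$ is written as (permutation)(set partition), which is the left-factor shape required in the definition of $\tau_\beta$, with set-partition part $f'$. Likewise $g^*$ is written as (set partition)(permutation), the right-factor shape, with set-partition part $f$.

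The only delicate point is uniqueness of these set-partition parts. The definition uses the unique $f\in P_n$ with $g=sf$ for some $s$, and the unique $f'$ with $h=f's'$. Since $*$ is a bijection reversing products, any decomposition $h^*=t\tilde f$ gives $h=\tilde f t^{-1}$. Uniqueness of the right set-partition part of $h$ then forces $\tilde f=f'$. Similarly, any decomposition $g^*=\tilde f t$ gives $g=t^{-1}\tilde f$, which forces $\tilde f=f$. Thus the set partitions entering $\tau_\beta(h^*,g^*)$ are exactly $f'$ and $f$, in that order, and $\tau_\beta(h^*,g^*)=\delta^{\beta(f',f)}$.

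\emph{Step 3: symmetry.} Since $\beta(f',f)=|\hat f'\cap\hat f|=|\hat f\cap\hat f'|=\beta(f,f')$, as already observed in the paper, we obtain $\tau_\beta(h^*,g^*)=\tau_\beta(g,h)$.

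The main obstacle is the bookkeeping in Step 2, namely checking that $*$ swaps the two one-sided normal forms ($sf\leftrightarrow fs^{-1}$) while preserving the set-partition parts. The uniqueness statement from Subsubsection~\ref{subPartyMon} resolves this.
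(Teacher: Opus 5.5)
Your proof is correct and follows the paper's argument: you write $g=sf$ and $h=f's'$, observe that $*$ sends these to $h^*=s'^{-1}f'$ and $g^*=fs^{-1}$, and conclude from the symmetry $\beta(f',f)=\beta(f,f')$. Your explicit uniqueness check on the set-partition parts, and your writing $\delta^{\beta(\cdot,\cdot)}$ rather than just the exponent, make precise what the paper's short proof leaves implicit.
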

\begin{proof}
Let $g=se$ and $h=ft$ with $s,t\in\Sfr_n$ and $e,f\in P_n$. Then, the product $h^*g^*=t^{-1}fes^{-1}$ satisfies $\tau_\beta(h^*,g^*)=\beta(f,e)$. Since $\beta(f,e)=\beta(e,f)$, we conclude $\tau_\beta(h^*,g^*)=\beta(e,f)=\tau_\beta(g,h)$.
\end{proof}

Before continuing, we need to recall some facts about Green's relations. Two elements $g,h$ of a monoid $M$ are said to be \emph{$\L$-equivalent} (resp. \emph{$\R$-equivalent}, resp. \emph{$\J$-equivalent}), denoted by $g\equiv_\L h$ (resp. $g\equiv_\R h$, resp. $g\equiv_\J h$), if $Mg=Mh$ (resp. $gM=hM$, resp. $MgM=MhM$). The classes relative to these relations are called \emph{$\L$-classes}, \emph{$\R$-classes}, and \emph{$\J$-classes}; for $g \in M$, we denote the corresponding classes by $L_g$, $R_g$ and $J_g$, respectively. The equivalence relation generated by the union $\equiv_\L\cup\equiv_\R$ is denoted by $\equiv_\D$, and its classes, called \emph{$\D$-classes}, are known to be unions of $\L$-classes and unions of $\R$-classes \cite[Section 2.1]{How1995}.

Since $\P_n$ is a finite inverse monoid, several structural simplifications arise. First, the relations $\equiv_\D$ and $\equiv_\J$ coincide \cite[Proposition 2.1.4]{How1995}. Furthermore, every $\L$-class and every $\R$-class in $\P_n$ contains a unique idempotent \cite[Theorem 5.1.1]{How1995}, and consequently, each $\D$-class contains at least one idempotent \cite[Proposition 2.3.2]{How1995}.

For $g=fs\in\P_n$ with $f\in P_n$ and $s\in\Sfr_n$, we set $\|g\|=\|f\|$. The $\J$-classes of $\P_n$ are completely characterized by this value.

\begin{proposition}[{\cite[Proposition 3.5, and Proposition 3.9]{OrSaSchZaAl2022}}]\label{propJclassPn}
Two elements $g,h\in\P_n$ belong to the same $\J$-class if and only if $\|g\|=\|h\|$. Moreover, the $\J$-classes of $\P_n$ are in bijection with the partitions of $n$; specifically, for each $\lambda\in\Par_n$, the corresponding $\J$-class is $J_\lambda:=\{g\in\P_n\mid\|g\|=\lambda\}$. In addition, the $\L$-classes of $\P_n$ are in bijection with the set partitions of $\dbrack{1,n}$, where for each $f\in P_n$, the corresponding $\L$-class is $L_f=\{sf\mid s\in\Sfr_n\}$.
\end{proposition}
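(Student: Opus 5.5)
Since $\P_n$ is a finite inverse monoid, I will work with Green's relations expressed through the idempotents $gg^*$ and $g^*g$. The standard fact is that, in an inverse monoid, $g\equiv_\L h$ if and only if $g^*g=h^*h$, and $g\equiv_\R h$ if and only if $gg^*=hh^*$ \cite[Proposition 5.1.2]{How1995}. For $g=fs=sf'$ with $f,f'\in P_n$, Subsubsection~\ref{MaxGroupsParty} gives $g^*=s^{-1}f$. Hence
\[gg^*=fss^{-1}f=f,\qquad g^*g=s^{-1}fs=f'.\]
Here I use that $s^{-1}fs\in P_n$ is the set partition $s^{-1}(f)$, obtained from the relation \eqref{PartyDual1}, and that $f'$ is unique. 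So the $\L$-class of $g$ is determined by $f'$, and the $\R$-class by $f$.

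For the $\L$-classes, I claim $L_f=\{h\in\P_n\mid h^*h=f\}$. Any $h=sf$ with $s\in\Sfr_n$ satisfies $h^*h=fs^{-1}sf=f$. Conversely, every $h$ can be written as $h=tf''$ with $f''=h^*h$, so $h^*h=f$ forces $h=tf$. This gives the bijection $f\mapsto L_f=\{sf\mid s\in\Sfr_n\}$ between $P_n$ and the set of $\L$-classes. That the classes are distinct for distinct $f$ follows from the uniqueness of the idempotent in each $\L$-class.

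For the $\J$-classes, recall that $\equiv_\D$ equals $\equiv_\J$ in a finite monoid. In an inverse monoid, two idempotents $e,e'$ are $\D$-related if and only if there is some $x$ with $xx^*=e$ and $x^*x=e'$. Every element is $\D$-related to its idempotent $gg^*=f$, so it suffices to decide when two idempotents $f,e\in P_n$ are $\D$-related. If $\|f\|=\|e\|$, Remark~\ref{orbitPartition} gives $s\in\Sfr_n$ with $s^{-1}fs=e$. Then $x=fs$ satisfies $xx^*=f$ and $x^*x=s^{-1}fs=e$, so $f\equiv_\D e$. Conversely, if $f\equiv_\D e$, there is some $x=f's$ with $xx^*=f'=f$ and $x^*x=s^{-1}fs=e$. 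Conjugation by $s^{-1}$ preserves block sizes, so $\|e\|=\|f\|$. Since $\|g\|=\|gg^*\|$ by definition, this shows $g\equiv_\J h$ if and only if $\|g\|=\|h\|$. The bijection with $\Par_n$ follows because every $\lambda\in\Par_n$ is realised as $\|f\|$ for some $f\in P_n$, for instance by taking consecutive blocks of the sizes given by $\lambda$.

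The main obstacle is making the identity $s^{-1}fs=s^{-1}(f)\in P_n$ and the factorization $g=sf'$ rigorous inside $\P_n$. This is exactly where the quotient $\P_n=T\Sfr_n/R$ of Proposition~\ref{TSn/R} is needed. I will transport the semidirect product relation \eqref{TSn6} through the quotient map to obtain \eqref{PartyDual1}. Then $fs=s\,s^{-1}(f)$, which yields both the factorization $g=sf'$ and the uniqueness statements $gg^*=f$ and $g^*g=f'$ used above.
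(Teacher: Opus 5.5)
Your proof is correct, and it takes a different route from the paper only in the sense that the paper gives no argument here. It imports the statement from \cite[Propositions 3.5 and 3.9]{OrSaSchZaAl2022}, after recording just the surrounding facts: $\P_n$ is a finite inverse monoid, $\equiv_\D$ and $\equiv_\J$ coincide, and each $\L$- and $\R$-class contains a unique idempotent.

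You turn those facts into an actual proof, using only what the paper already states. These are the factorization $g=fs=sf'$, the inverse formula $g^*=s^{-1}f=f's^{-1}$, the conjugation rule \eqref{PartyDual1}, and Remark~\ref{orbitPartition}. The identities $gg^*=f$ and $g^*g=s^{-1}fs=f'$ reduce $\equiv_\L$ to equality of the right idempotent, which gives $L_f=\{sf\mid s\in\Sfr_n\}$. They also reduce $\equiv_\J=\equiv_\D$ to $\Sfr_n$-conjugacy of idempotents, i.e.\ to equality of block types.

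The citation is shorter. Your route is self-contained and yields the $\R$-classes $R_f=\{fs\mid s\in\Sfr_n\}$ at no extra cost. It also transfers to $T\Sfr_n$, where $g^*=s^{-1}e$, $gg^*=e$ and $g^*g=s^{-1}es$. There it supplies the direction $T\Sfr_ne'T\Sfr_n=T\Sfr_neT\Sfr_n\Rightarrow e'=s(e)$, which Subsection~\ref{RepTSn} uses without a separate argument.

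The ``main obstacle'' you flag is milder than you suggest. Besides \eqref{PartyDual1} already being recorded, the uniqueness of $f$ in $g=fs$ follows from uniqueness of inverses: for any such decomposition, $s^{-1}f$ satisfies the inverse axioms for $g$, which forces $gg^*=f$.
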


A set partition is said to be \emph{convex} if each of its blocks is an interval $\dbrack{a,b}$. For any $\lambda=(\lambda_1,\ldots,\lambda_k)\in\Par_n$, we denote by $\bar{\lambda}$ the unique convex set partition $\bar{\lambda}=(\bar{\lambda}_1,\ldots,\bar{\lambda}_k)\in P_n$ such that $|\bar{\lambda}_i|=\lambda_i$ for all $i\in\dbrack{1,k}$. By construction, $\|\bar{\lambda}\|=\lambda$, and therefore $J_{\bar{\lambda}}=J_\lambda$. Consequently, the set $\{\bar{\lambda}\mid\lambda\in\Par_n\}$ forms a system of idempotent representatives for the $\J$-classes of $\P_n$.

For Assumption 3 we require that each $\D$-class contains an idempotent fixed by the anti-involution. By Proposition~\ref{propJclassPn}, this condition is readily satisfied since $\bar{\lambda}^*=\bar{\lambda}$ for all $\lambda\in\Par_n$. Note that since $\P_n$ is equipped with an anti-involution $*$, the image of an $\L$-class is an $\R$-class; specifically, $L_g^* = R_{g^*}$ for any $g\in\P_n$. In the case of our canonical representatives, for $\lambda\in\Par_n$, the convex set partition $\bar{\lambda}$ satisfies $\bar{\lambda}^*=\bar{\lambda}$, which implies $L_{\bar{\lambda}}^*=R_{\bar{\lambda}}$. Thus, the maximal subgroup of $\P_n$ at $\bar{\lambda}$ is given by the intersection ${\Sfr_{[\bar{\lambda}]}}=L_{\bar{\lambda}}\cap L_{\bar{\lambda}}^*$ \cite[Exercise 1.19]{Steinberg2016}.

For Assumption 5, which requires the twisted group algebras of these maximal subgroups to be cellular, we have the following proposition. 
\begin{proposition}
Let $f\in P_n\subseteq\P_n$, and let $\Sfr_{[f]}\simeq\Sfr_{k_1}\times\cdots\times\Sfr_{k_q}$ with $\dbrace{f}=\{m_1<\cdots<m_q\}$ and $k_i=|I_{[m_i]}|$ for all $i\in\dbrack{1,q}$, be the maximal subgroup of $\P_n$ at $f$ as in \eqref{maxSubParty}. Then\[\CC^{\tau_\beta}[\Sfr_{[f]}]\simeq\CC[\Sfr_{k_1}]\otimes\cdots\otimes\CC[\Sfr_{k_q}].\]Consequently, the twisted algebra $\CC^{\tau_\beta}[\Sfr_{[f]}]$ is cellular.
\end{proposition}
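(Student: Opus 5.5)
The maximal subgroup at $f$ is $G_f=f\Sfr_{[f]}$, that is, the elements $g=fs$ with $s\in\Sfr_{[f]}$. The plan is to show that the twisting $\tau_\beta$ restricted to $G_f\times G_f$ is constant and nonzero. After rescaling, the twisted group algebra then becomes the ordinary group algebra, and we conclude using the direct product structure \eqref{maxSubParty}.

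First, compute $\tau_\beta$ on $G_f$. Every element $s\in\Sfr_{[f]}$ permutes blocks of $f$ of equal size order-preservingly, so $sfs^{-1}=f$. Hence $fs=sf$, and the set partition component of $g=fs$ in both factorizations $g=sf$ and $g=fs$ is $f$ itself, by uniqueness of $f,f'$ in Subsubsection~\ref{subPartyMon}. By the description in Subsubsection~\ref{twistedPartyAlgebraS}, for $g=sf$ and $g'=fs'$ in $G_f$ we get
\[\tau_\beta(g,g')=\delta^{\beta(f,f)}=\delta^{|\hat f|}=\delta^{\,n-|f|}.\]
So $\tau_\beta$ is the constant $c:=\delta^{n-|f|}\neq0$ on $G_f\times G_f$.

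Next, rescale. In $\CC^{\tau_\beta}[G_f]$ we have $g\cdot g'=c\,gg'$. Put $\tilde g:=c^{-1}g$. Then $\tilde g\cdot\tilde g'=c^{-2}c\,gg'=\widetilde{gg'}$, so $g\mapsto\tilde g$ extends linearly to an algebra isomorphism $\CC[G_f]\to\CC^{\tau_\beta}[G_f]$. The identity of the twisted algebra is $c^{-1}f$, which is consistent with this. Since $G_f\simeq\Sfr_{[f]}\simeq\Sfr_{k_1}\times\cdots\times\Sfr_{k_q}$ via $fs\mapsto s$, and the group algebra of a direct product is the tensor product of the group algebras, we obtain the claimed isomorphism
\[\CC^{\tau_\beta}[\Sfr_{[f]}]\simeq\CC[\Sfr_{k_1}]\otimes\cdots\otimes\CC[\Sfr_{k_q}].\]

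Finally, cellularity. Each $\CC[\Sfr_k]$ is cellular via the Murphy basis or Kazhdan--Lusztig basis, with anti-involution $s\mapsto s^{-1}$. A tensor product of cellular algebras is cellular, using the product cell datum ordered lexicographically or by the product order. Under the isomorphism, the anti-involution $fs\mapsto s^{-1}f$ on $G_f$ corresponds to inversion, which Wilcox's framework needs.

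The main obstacle is the first step: making sure that the factorizations $g=sf$ and $g'=f's'$ used to define $\tau_\beta$ really give $f$ and $f'=f$ for elements of $G_f$, so that $\beta$ reduces to $\beta(f,f)$. This rests on the commutation $sf=fs$ for $s\in\Sfr_{[f]}$ together with the uniqueness of the idempotent factor in $\P_n=P_n\Sfr_n=\Sfr_nP_n$.
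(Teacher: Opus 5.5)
Your proposal is correct and follows essentially the same route as the paper. The twisting is the constant $\delta^{\beta(f,f)}$ on the maximal subgroup, rescaling by its inverse gives an algebra isomorphism with the ordinary group algebra $\CC[\Sfr_{[f]}]\simeq\CC[\Sfr_{k_1}]\otimes\cdots\otimes\CC[\Sfr_{k_q}]$, and cellularity follows because symmetric group algebras are cellular and tensor products of cellular algebras are cellular. Your check that $sf=fs$ for $s\in\Sfr_{[f]}$, so that both idempotent factors in the twisting formula equal $f$, makes explicit a step the paper treats as immediate (``by definition'').
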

\begin{proof}
By definition, any pair of elements $g,h\in\Sfr_{[f]}$ share the same underlying set partition, that is, $g=fs$ and $h=fs'$ for some $s,s'\in\Sfr_n$. Therefore, the twisting evaluates to a constant scalar $C:=\tau_\beta(g,h)=\delta^{\beta(f,f)}$ for all $g,h\in\Sfr_{[f]}$. Consider the linear isomorphism $\phi:\CC[\Sfr_{[f]}]\to\CC^{\tau_\beta}[\Sfr_{[f]}]$ given by $\phi(g)=C^{-1}g$ for all $g\in\Sfr_{[f]}$. For any $g,h\in\Sfr_{[f]}$, we have $\phi(g)\cdot\phi(h)=(C^{-1}g)\cdot(C^{-1}h)=C^{-2}\tau_\beta(g,h)gh=C^{-1}gh=\phi(gh)$. Thus, $\phi$ is an algebra isomorphism, which implies $\CC^{\tau_\beta}[\Sfr_{[f]}]\simeq\CC[\Sfr_{[f]}]\simeq\CC[\Sfr_{k_1}]\otimes\cdots\otimes\CC[\Sfr_{k_q}]$. Since the group algebra of the symmetric group is cellular and the tensor product of cellular algebras is also cellular \cite[Example (1.2)]{GrLe96} \cite[Subsection 3.2]{GeGo13}, the result follows.
\end{proof}

Finally, since the twisting $\tau_\beta$ takes values in $\CC^\times$, all requirements of the framework are satisfied. So, by \cite[Corollary~6]{Wilcox2007} we obtain the following theorem.
\begin{theorem}\label{Pn(d)cellular}
The party algebra $\P_n(\delta)$ is cellular.
\end{theorem}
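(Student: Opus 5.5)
The plan is to apply Wilcox's criterion \cite[Theorem 5]{Wilcox2007} in the weakened form \cite[Corollary~6]{Wilcox2007}, which removes Assumption~4. First I would record that $\P_n(\delta)=\CC^{\tau_\beta}[\P_n]$, as explained in Subsubsection~\ref{twistedPartyAlgebraS}. Since $\delta\in\CC^\times$, the twisting $\tau_\beta=\delta^{\beta}$ takes only invertible values, which is the standing hypothesis of Wilcox's framework. The monoid $\P_n$ is finite and inverse, so the $\D$- and $\J$-relations coincide, and each $\L$- and $\R$-class contains a unique idempotent.

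Next I would check the remaining assumptions in order.

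\textbf{Assumptions 1 and 2.} The map $g\mapsto g^*$ with $(fs)^*=s^{-1}f$ is an anti-involution of $\P_n$, coinciding with Wilcox's diagrammatic flip. The lemma preceding this theorem shows $\tau_\beta(g,h)=\tau_\beta(h^*,g^*)$, which rests on the symmetry $\beta(f,e)=\beta(e,f)$. One point needs care there: $\tau_\beta(g,h)$ should depend only on the right idempotent of $g$ and the left idempotent of $h$. I would make sure this reading of the twisting is the one used.

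\textbf{Assumption 3.} By Proposition~\ref{propJclassPn}, the $\J$-classes are the $J_\lambda$ with $\lambda\in\Par_n$. Each $J_\lambda$ contains the convex idempotent $\bar\lambda$, which is visibly fixed by $*$.

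\textbf{Assumption 5.} The maximal subgroup at $\bar\lambda$ is $\bar\lambda\Sfr_{[\bar\lambda]}$. By the last proposition, its twisted group algebra is isomorphic, via rescaling by the constant $\delta^{\beta(\bar\lambda,\bar\lambda)}$, to a tensor product of symmetric group algebras. That algebra is cellular by \cite{GrLe96}.

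With all of this in place, \cite[Corollary~6]{Wilcox2007} gives the cellularity of $\P_n(\delta)$. The cell modules are indexed by pairs $(\lambda,\mu)$, where $\lambda\in\Par_n$ and $\mu$ runs over cell indices of $\CC[\Sfr_{k_1}]\otimes\cdots\otimes\CC[\Sfr_{k_q}]$, that is, tuples of partitions.

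The main obstacle I expect is matching Wilcox's precise hypotheses, rather than any computation. Assumption~5 concerns the twisted algebra of the maximal subgroup with the restricted twisting, and the rescaling argument covers exactly this. The risk is that Wilcox's setup might require the twisting to be compatible with the $\J$-order in a way that goes beyond what Corollary~6 removes. I would therefore reread \cite[Section 5]{Wilcox2007} to confirm that, once Assumption~4 is dropped, only invertibility of the twisting values, the involution compatibility, and the existence of $*$-fixed idempotents in every $\D$-class remain. All of these are verified above.
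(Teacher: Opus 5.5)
Your proposal follows the paper's proof step for step. It applies \cite[Corollary~6]{Wilcox2007} to $\P_n(\delta)=\CC^{\tau_\beta}[\P_n]$, using the symmetry lemma for Assumptions 1--2, the $*$-fixed idempotents $\bar{\lambda}$ for Assumption 3, invertibility of the twisting values, and the constant-twisting rescaling on maximal subgroups for Assumption 5. The structure is sound. However, the one ingredient you import from Subsubsection~\ref{twistedPartyAlgebraS} is false as literally stated, and your proposed check (that $\tau_\beta(g,h)$ depends only on the right idempotent of $g$ and the left idempotent of $h$) would not catch it. With $\beta(f,f')=|\hat{f}\cap\hat{f}'|$, the map $\tau_\beta$ is not a twisting. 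In $\P_3$ take $x=f_{1,2}$, $y=f_{2,3}$, $z=f_{1,3}$. Then $xy=yz=f_{\{1,2,3\}}$, whose standard arcs are $\{1,2\}$ and $\{2,3\}$. So $\tau_\beta(x,y)\tau_\beta(xy,z)=1$, while $\tau_\beta(x,yz)\tau_\beta(y,z)=\delta$. The presentation shows which value is right. The element $f_{1,3}=s_2f_{1,2}s_2$ is represented by $S_2F_1S_2$, since twisting values with a permutation factor are $1$. Then $F_1F_2\,S_2F_1S_2=F_1F_2F_1S_2=\delta F_1F_2$, although $\beta(f_{\{1,2,3\}},f_{1,3})=0$.

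The repair is to use $\tau(g,h)=\delta^{\,r(f)+r(f')-r(f\vee f')}$. Here $r(f)=n-|f|$ with $|f|$ the number of blocks, $f$ is the right idempotent of $g$, and $f'$ is the left idempotent of $h$. Equivalently, the exponent is the cycle rank of the multigraph on $\{1,\ldots,n\}$ with edge multiset $\hat{f}\sqcup\hat{f}'$. Common standard arcs account only for its $2$-cycles, which is why $\beta$ undercounts.

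This is a twisting. Suppose $x$ has right idempotent $a$, $y=bt$ with $b\in P_n$ and $t\in\Sfr_n$, and $z$ has left idempotent $c$. Then both sides of the cocycle identity equal $\delta^{\,r(a)+r(b)+r(c)-r(a\vee b\vee tct^{-1})}$, since conjugation by $t$ preserves $r$. Its only nontrivial value on generators is $\tau(f_i,f_i)=\delta$. Hence Theorem~\ref{TwistedPresentation}, applied to the presentation of $\P_n$ by \eqref{TSn1} and \eqref{Par1}--\eqref{Par3}, returns exactly \eqref{partyAlgRel1}--\eqref{partyAlgRel4}.

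With this twisting in place of $\tau_\beta$, each of your checks goes through verbatim. The exponent is symmetric in $f$ and $f'$, which gives Assumptions 1--2, and the values are invertible. On the maximal subgroup at $f$ the twisting is the constant $\delta^{\,n-|f|}=\delta^{\beta(f,f)}$, so your rescaling argument for Assumption 5 is unchanged.
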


\subsection{A twisted monoid algebra of the tied braid monoid}\label{SubSecTwisBT}

\subsubsection{}\label{twistedPnAlg}Restricting the twisting of $\P_n$ yields a twisting from $P_n$ into $\CC$, which we also denote by $\tau_\beta$. This gives rise to the twisted monoid algebra $P_n(\delta):=\CC^{\tau_\beta}[P_n]$, which we refer to as the \emph{twisted monoid algebra of set partitions}. Evidently, $P_n(\delta)$ coincides with the subalgebra of $\P_n(\delta)$ generated by $F_1,\ldots,F_{n-1}$, subject to the relations in \eqref{partyAlgRel2}.

\subsubsection{}\label{twistedTBn}The twisting from $P_n$ described above induces a twisting from $TB_n$ into $\CC$, which we also denote by $\tau_\beta$. Since $TB_n=P_n\rtimes B_n$, for each pair of elements $g,g'\in TB_n$ there are unique permutations $s,s'\in\Sfr_n$ and unique set partitions $e,e'\in P_n$ such that $g=se$ and $g'=e's'$. For $\delta\in\CC^\times$, we then set $\tau_\beta(g,g')=\delta^{\beta(e,e')}$, where $\beta(e,e')$ is defined as in Subsubsection~\ref{twistedPnAlg}.

The algebra $TB_n(\delta):=\CC^{\tau_\beta}[TB_n]$ will be called the \emph{twisted monoid algebra of tied braids}. By applying Theorem~\ref{TwistedPresentation}, we obtain that $\CC^{\tau_\beta}[TB_n]$ is presented by generators $\sigma_1,\ldots,\sigma_{n-1}$ satisfying the braid relations in \eqref{braidRels}, generators $\sigma_1^{-1},\ldots,\sigma_{n-1}^{-1}$ satisfying \eqref{TBM1}, and generators $\bar{e}_1,\ldots,\bar{e}_{n-1}$, subject to the following relations:\begin{gather}
\bar{e}_i^2=\delta\bar{e}_i,\qquad \bar{e}_i\bar{e}_j=\bar{e}_j\bar{e}_i,\\
\sigma_i\bar{e}_j=\bar{e}_j\sigma_i\quad\text{if }|i-j|\neq1,\qquad\sigma_i\sigma_j^{\pm1}\bar{e}_i=\bar{e}_j\sigma_i\sigma_j^{\pm1}\quad\text{if }|i-j|=1,\\
\sigma_i\bar{e}_j\bar{e}_i=\bar{e}_j\sigma_i\bar{e}_j=\bar{e}_i\bar{e}_j\sigma_i\quad\text{if }|i-j|=1.
\end{gather}

\subsection{Party-Brauer-like monoids}\label{SubSecParBrMon}
\subsubsection{}
The \emph{$2$-modular party monoid} or \emph{$2$-tonal partition monoid} is the submonoid $\P^{_{(2)}}_n$ of $\Cfr_n$ generated by $s_1,\ldots,s_{n-1}$, $t_1,\ldots,t_{n-1}$ and $f_1,\ldots,f_{n-1}$ \cite{Ta1997}. Its cardinality $|\P^{_{(2)}}_n|$ is given by the sequence \cite[\href{https://oeis.org/A005046}{A005046}]{OEIS} \cite[Subsection 6.2]{AhMaMa21}. With these generators, the monoid $\P^{_{(2)}}_n$ admits a presentation by relations \eqref{TSn1}, \eqref{Br1}, \eqref{Br2}, \eqref{Par1}, \eqref{Par2}, \eqref{Par3}, together with the following additional ones \cite[Subsection 3.3]{Ore05}:\begin{gather}
t_if_i=t_i,\qquad t_if_j=f_jt_i\quad\text{if }|i-j|\neq1,\qquad f_it_jf_i=f_jf_i\quad\text{if }|i-j|=1,\label{PBr1}\\
t_if_jt_i=t_i\quad\text{if }|i-j|=1.\label{PBr2}
\end{gather}

\begin{proposition}
The monoid $\P^{_{(2)}}_n$ is the quotient of the ramified Brauer monoid $\R\Brfr_n$ by the relations $e_i=d_i$ for all $i$.
\end{proposition}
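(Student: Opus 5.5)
We argue at the level of presentations, in the same spirit as the proof of Proposition~\ref{TSn/R}. Start from the presentation of $\R\Brfr_n$ by generators $s_i,e_i,t_i,d_i$ with relations \eqref{TSn1}, \eqref{Br1}--\eqref{Br2}, \eqref{TSn2}--\eqref{TSn3} and \eqref{RelsRBrn1}--\eqref{RelsRBrn5}, and add the relations $e_i=d_i$. By Tietze transformations we eliminate the $d_i$, replacing each by $e_i$ in every relation. We then identify $e_i$ with $f_i$ and compare the resulting presentation with the one of $\P^{_{(2)}}_n$ given by \eqref{TSn1}, \eqref{Br1}, \eqref{Br2}, \eqref{Par1}--\eqref{Par3}, \eqref{PBr1}, \eqref{PBr2}. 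The task is then to show that the two relation sets generate the same congruence on the free monoid over $\{s_i,t_i,f_i\}$.

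\emph{Forward direction.} We check that every relation of $\P^{_{(2)}}_n$ follows from the substituted relations of $\R\Brfr_n$.
\begin{itemize}
\item \eqref{Par1} is \eqref{TSn2}.
\item $s_if_i=f_i$ comes from $s_id_i=d_i$ in \eqref{RelsRBrn3}. Its mirror $f_is_i=f_i$ follows by applying the anti-involution $*$ (all presentations involved are $*$-symmetric), or directly as in Proposition~\ref{TSn/R}.
\item The commutation part of \eqref{Par2}, and \eqref{Par3}, are in \eqref{TSn3}.
\item In \eqref{PBr1}: $t_if_i=t_i$ is $t_ie_i=t_i$ from \eqref{RelsRBrn4}, and $t_if_j=f_jt_i$ for $|i-j|\neq1$ is also in \eqref{RelsRBrn4}. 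The relation $f_it_jf_i=f_jf_i$ comes from \eqref{RelsRBrn5}, giving $e_it_je_i=e_id_j=e_ie_j$.
\item \eqref{PBr2}, $t_if_jt_i=t_i$, must be derived. Write $t_i=t_it_jt_i$ by \eqref{Br1}. Then use $t_j=t_jf_j$ from \eqref{RelsRBrn4}, the relation $s_jt_i\cdot$ from \eqref{Br2}, and $t_i f_i=t_i$, to absorb $f_j$. A direct route: $t_if_jt_i=t_if_jf_it_i$ (inserting $f_i$ via $t_if_i=t_i$), then apply $f_jf_i=f_it_jf_i$ to get $t_if_it_jf_it_i=t_it_jt_i=t_i$.
\end{itemize}

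\emph{Reverse direction.} We check that each substituted relation of $\R\Brfr_n$ holds in $\P^{_{(2)}}_n$.
\begin{itemize}
\item \eqref{RelsRBrn1} and the first two relations of \eqref{RelsRBrn2} become trivial or are instances of \eqref{Par1}.
\item $e_jd_ie_j=d_id_jd_i$ becomes $f_jf_if_j=f_if_jf_i$, which is trivial.
\item \eqref{RelsRBrn3} follows from \eqref{Par2} and \eqref{Par3}.
\item \eqref{RelsRBrn4} follows from \eqref{PBr1}.
\item \eqref{RelsRBrn5} becomes $f_jt_if_j=f_jf_i$, which is \eqref{PBr1}.
\item The tied-symmetric relations \eqref{TSn3} in the case $|i-j|=1$ that are not literally among \eqref{Par2}--\eqref{Par3}, namely $s_if_jf_i=f_js_if_j=f_if_js_i$, were already derived from \eqref{Par1}--\eqref{Par3} in the proof of Proposition~\ref{TSn/R}.
\end{itemize}

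\emph{Main obstacle.} The main difficulty is bookkeeping rather than depth. We must make sure the cited presentation of $\P^{_{(2)}}_n$ from \cite{Ore05} uses exactly the conventions assumed here, and we must derive \eqref{PBr2} cleanly. If the short derivation of \eqref{PBr2} above fails because of an index mismatch, we would instead use the diagrammatic realization: both monoids act faithfully as submonoids of $\Cfr_n$. Under the map $(I,J)\mapsto J$, which sends $s_i\mapsto s_i$, $t_i\mapsto t_i$, $e_i,d_i\mapsto f_i$, one checks that the relation $e_i=d_i$ is killed. Then $\P^{_{(2)}}_n$ is a quotient of $\R\Brfr_n/(e_i=d_i)$, and the presentation comparison gives the reverse surjection.
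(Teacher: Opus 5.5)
Your proposal is correct and is essentially the paper's proof. You substitute $e_i=d_i=f_i$ into the presentation of $\R\Brfr_n$, show that \eqref{PBr2} is redundant via $t_i=t_it_jt_i=t_if_it_jf_it_i=t_if_jf_it_i=t_if_jt_i$, and recover $s_if_jf_i=f_js_if_j=f_if_js_i$ from the proof of Proposition~\ref{TSn/R}.

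One small correction: inserting $f_i$ to the left of $t_i$ uses $f_it_i=t_i$, not $t_if_i=t_i$. This follows from $t_if_i=t_i$ together with the $i=j$ instance of $t_if_j=f_jt_i$ in \eqref{PBr1}. So your diagrammatic fallback is unnecessary, and it would not have supplied \eqref{PBr2} in the quotient anyway, since the projection $(I,J)\mapsto J$ only gives the other direction.
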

\begin{proof}
First, observe that \eqref{PBr2} is superfluous. Indeed, by applying \eqref{Br1}, \eqref{PBr1} and \eqref{PBr2}, respectively, we obtain\[t_i=t_it_jt_i=t_if_it_jf_it_i=t_if_jf_it_i=t_if_jt_i.\]
Now, setting $e_i=d_i=f_i$ in the presentation of $\R\Brfr_n$ given in Subsubsection~\ref{BrauerSSS}, we immediately obtain that $\P^{_{(2)}}_n$ is presented by generators $s_1,\dots,s_{n-1}$, $t_1,\ldots,t_{n-1}$, $f_1,\ldots,f_{n-1}$ subject to \eqref{TSn1}, \eqref{Br1}, \eqref{Br2}, \eqref{Par1}, \eqref{Par2}, \eqref{Par3}, \eqref{PBr1}, and the relations\[s_if_jf_i=f_js_if_j=f_if_js_i\quad\text{if }|i-j|=1.\]However, as shown in the proof of Proposition~\ref{TSn/R}, this relation is a consequence of \eqref{Par1}, \eqref{Par2} and \eqref{Par3}. This completes the proof.
\end{proof}

\begin{remark}
Proposition \ref{TSn/R} motivates investigating the monoid obtained by adding the relation $s_ie_i=e_i$ to the presentation of $\R\Brfr_n$ given in Subsubsection~\ref{BrauerSSS}.
\end{remark}

\section{Party-Hecke algebras}\label{SecParty-Hecke}

In this section, we introduce the Party-Hecke algebra (Definition \ref{Pn(p,q)}) and prove that it can be realized as a quotient of a twisted monoid algebra of the tied braid group. In Theorem~\ref{TR}, we construct a tensorial representation of this algebra, which we then use to establish a linear basis. Finally, Theorem~\ref{GeSS} shows that the Party-Hecke algebra is generically semisimple.

\subsection{The Party-Hecke algebra}\label{SubsecParty-Hecke}

In what follows, we set $p,q\in\CC^\times$.

\subsubsection{}\label{SubsubsecParty-Hecke}{We begin our construction of the Party-Hecke algebra by introducing a new monoid, referred to as the \emph{party braid monoid}.

\begin{definition}\label{BPn}
The \emph{party braid monoid} $\BP_n$ is the monoid presented by generators $\sigma_1,\ldots,\sigma_{n-1}$ satisfying the braid relations in \eqref{braidRels}, and commuting generators $\bar{f}_1,\ldots,\bar{f}_{n-1}$, together with the following mixed relations:
\begin{gather}
\sigma_i \bar{f}_j = \bar{f}_j \sigma_i\quad\text{if }|i-j|\neq1,\qquad\sigma_i\sigma_j \bar{f}_i=\bar{f}_j\sigma_i\sigma_j\quad\text{if }|i-j|=1.\label{BP2}
\end{gather}
Observe that $\BP_n$ contains a copy of the braid monoid $B_n^+$. Also, note that the party monoid $\P_n$ is a quotient of the party braid monoid $\BP_n$.
\end{definition}

\subsubsection{}
{We introduce the Party-Hecke algebra as a quotient of the monoid algebra of the party braid monoid.}

\begin{definition}\label{Pn(p,q)}
The \emph{Party-Hecke algebra} $\Pa_n(p,q)$ is defined as the quotient of the monoid algebra $\CC[\BP_n]$ by the two-sided ideal generated by the following elements:
\begin{equation}\label{PH1}
\sigma_i^2-pq^2-p(p-1)\bar{f}_i,\qquad\sigma_i\bar{f}_i-pq\bar{f}_i,\qquad\bar{f}_i^2-q^2\bar{f}_i
\end{equation}
\end{definition}
We denote by $G_i$ the image of $\sigma_i$ in the quotient and by $F_i$ the image of $\bar{f}_i$. Hence, $\Pa_n(p,q)$ is the $\CC$-algebra presented by generators $G_1,\ldots,G_{n-1}$ and $F_1,\ldots,F_{n-1}$, subject to the relations:
\begin{align}
G_i^2=pq^2+p(p-1)F_i,&\qquad G_iF_i=F_iG_i=pqF_i,\label{G^2}\\
G_iG_j=G_jG_i\qquad\text{if }|i-j|>1,&\qquad G_iG_jG_i=G_jG_jG_i\qquad\text{if }|i-j|=1,\label{P-H1}\\
F_i^2=q^2F_i,&\qquad F_iF_j=F_jF_i,\label{P-H2}\\
G_iF_j=F_jG_i\quad\text{if }|i-j|\neq1,&\qquad G_iG_jF_i=F_jG_iG_j,\quad\text{if }|i-j|=1.\label{P-H3}
\end{align}
Relation \eqref{G^2} implies that each $G_i$ is invertible with inverse given by
\begin{gather}G_i^{-1}=p^{-1}q^{-2}G_i+q^{-3}(p^{-1}-1)F_i.\label{G^-1}\end{gather}Observe that $\Pa_n(1,1)=\CC[\P_n]$, the monoid algebra of the party monoid.

\subsection{The Party-Hecke algebra as a quotient of $TB_n(q^2)$}\label{SubsecParty-HeckeQuot}
We now show that the algebra $\Pa_n(p,q)$ is a quotient of $TB_n(q^2)$. To establish this, we first require the following lemma.
\begin{lemma}\label{BTRels}
For each $i,j\in\dbrack{1,n-1}$ with $|i-j|=1$, we have:
\begin{enumerate}
\item $G_iF_jF_i=F_jG_iF_j=F_iF_jG_i$.\label{TTD}
\item $G_iG_j^{-1}F_i=F_jG_iG_j^{-1}$.\label{BP2^-1}
\end{enumerate}
\end{lemma}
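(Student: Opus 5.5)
Both identities will be derived directly from the defining relations \eqref{G^2}--\eqref{P-H3}, together with the inverse formula \eqref{G^-1}. The model is the proof of Proposition~\ref{TSn/R}, where the analogous monoid identity $s_if_jf_i=f_js_if_j=f_if_js_i$ follows from \eqref{Par1}--\eqref{Par3}. The only new feature is bookkeeping of the scalars $pq$ and $q^2$.

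For \eqref{TTD}, fix $|i-j|=1$. The key tool is the mixed relation \eqref{P-H3}, which we rewrite as $F_j=G_iG_jF_iG_j^{-1}G_i^{-1}$. This expresses $F_j$ as a conjugate of $F_i$.
\begin{itemize}
\item First, since the $F$'s commute and $F_iG_i=G_iF_i=pqF_i$, we have $F_iF_jG_i=F_jF_iG_i=pqF_jF_i$.
\item Second, compute $G_iF_jF_i$. Using \eqref{P-H3} in the form $F_jG_iG_j=G_iG_jF_i$, we want to move $G_i$ past $F_j$. Write $G_iF_j=G_iF_jG_iG_jG_j^{-1}G_i^{-1}=G_iG_iG_jF_iG_j^{-1}G_i^{-1}$. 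Then substitute $G_i^2=pq^2+p(p-1)F_i$ and expand. This is messy, so I would first try the cleaner route below.
\item The cleaner route is to exploit the symmetric relation $G_jG_iF_j=F_iG_jG_i$, obtained from \eqref{P-H3} with $i,j$ swapped. Multiply it on the right by $F_i$ and use $G_iF_i=pqF_i$ after commuting: $G_jG_iF_jF_i=F_iG_jG_iF_i=pq\,F_iG_jF_i$.
\item Symmetrically, $G_iG_jF_iF_j=pq\,F_jG_iF_j$. The left side equals $F_jG_iG_jF_j=pq\,F_jG_iF_j$, which is consistent. So we get the identity $G_iG_jF_iF_j=pq\,F_jG_iF_j$.
\item Now apply $G_j$ to $F_iF_j$: $G_jF_iF_j=G_jF_jF_i=pqF_jF_i$. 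Hence $G_iG_jF_iF_j=G_i\cdot pqF_iF_j$.
\end{itemize}
Combining, $pq\,G_iF_iF_j=pq\,F_jG_iF_j$. Cancelling $pq\neq0$ gives $G_iF_iF_j=F_jG_iF_j$. Since the $F$'s commute, $G_iF_jF_i=G_iF_iF_j=pqF_iF_j$, and we already have $F_iF_jG_i=pqF_iF_j$. All three expressions therefore agree.

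For \eqref{BP2^-1}, substitute \eqref{G^-1} for $G_j^{-1}$: $G_iG_j^{-1}F_i=p^{-1}q^{-2}G_iG_jF_i+q^{-3}(p^{-1}-1)G_iF_jF_i$. By \eqref{P-H3}, the first term equals $p^{-1}q^{-2}F_jG_iG_j$. By \eqref{TTD}, the second term equals $q^{-3}(p^{-1}-1)F_jG_iF_j$. Hence the sum is $F_jG_i\bigl(p^{-1}q^{-2}G_j+q^{-3}(p^{-1}-1)F_j\bigr)=F_jG_iG_j^{-1}$, as required.

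The main obstacle is \eqref{TTD}, and specifically the middle term $F_jG_iF_j$. The chain above shows how to obtain it from the right-multiplied mixed relation, but the step turning $G_iG_jF_iF_j$ into $pq\,G_iF_iF_j$ relies on commuting $F_i$ and $F_j$ before applying $G_jF_j=pqF_j$. That manipulation needs care. Part \eqref{BP2^-1} is then routine given \eqref{TTD}.
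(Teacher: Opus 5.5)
Your proof is correct and is essentially the paper's argument: both get the middle term $F_jG_iF_j$ from \eqref{P-H3} by absorbing $G_j$ into $F_j$. The paper conjugates and uses $G_j^{-1}F_j=(pq)^{-1}F_j$, while you right-multiply by $F_j$, use $G_jF_j=pqF_j$, and cancel $pq$; your part (2) is the same computation as the paper's, and the abandoned ``messy route'', the extra identity for $G_jG_iF_jF_i$, and the ``delicate'' commutation (which is just \eqref{P-H2}) can all be dropped.
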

\begin{proof}
First, we show \eqref{TTD} as follows:\[\begin{array}{rcl}
\!F_jG_iF_j&\!\stackrel{\eqref{P-H3}}{=}\!&G_iG_jF_iG_j^{-1}F_j\\
&\!\stackrel{\eqref{G^2}}{=}\!&w^{-1}G_iG_jF_iF_j\ \stackrel{\eqref{G^2}}{=}\ G_iF_jF_i\ \stackrel{\eqref{G^2}}{=}\ wF_iF_j\ \stackrel{\eqref{G^2}}{=}\ F_iF_jG_i.
\end{array}\]Then, by applying \eqref{TTD}, we get \eqref{BP2^-1}:\[\begin{array}{rcl}
G_iG_j^{-1}F_i&\stackrel{\eqref{G^-1}}{=}&G_i(p^{-1}q^{-2}G_j+q^{-3}(p^{-1}-1)F_j)F_i\\[0.15cm]
&=&p^{-1}q^{-2}G_iG_jF_i+q^{-3}(p^{-1}-1)G_iF_jF_i\\
&\stackrel{\eqref{P-H3}}{=}&p^{-1}q^{-2}F_jG_iG_j+q^{-3}(p^{-1}-1)F_jG_iF_j\\[0.15cm]
&=&F_jG_i(p^{-1}q^{-2}G_j+q^{-3}(p^{-1}-1)F_j)\\
&\stackrel{\eqref{G^-1}}{=}&F_jG_iG_j^{-1}.
\end{array}\]
\end{proof}

\begin{proposition}\label{TBnQuotient}
The map sending $\sigma_i\mapsto G_i$ and $\bar{e}_i\mapsto F_i$ defines an algebra homomorphism $TB_n(q^2)\to\Pa_n(p,q)$. More precisely, the Party-Hecke algebra $\Pa_n(p,q)$ is the quotient of the twisted monoid algebra $TB_n(q^2)$ by the two-sided ideal generated by the elements $\sigma_i^2-pq^2-p(p-1)\bar{e}_i$ and $\sigma_i\bar{e}_i-pq\bar{e}_i$ for all $i\in\dbrack{1,n-1}$.
\end{proposition}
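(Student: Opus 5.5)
The plan is to compare presentations, using the presentation of $TB_n(q^2)=\CC^{\tau_\beta}[TB_n]$ obtained in Subsubsection~\ref{twistedTBn} via Theorem~\ref{TwistedPresentation} with $\delta=q^2$. First I will show that the assignment $\sigma_i\mapsto G_i$, $\sigma_i^{-1}\mapsto G_i^{-1}$ (given by \eqref{G^-1}), $\bar{e}_i\mapsto F_i$ respects every defining relation of $TB_n(q^2)$, so that it extends to an algebra homomorphism $\Phi:TB_n(q^2)\to\Pa_n(p,q)$. The checks are as follows.
\begin{itemize}
\item The braid relations \eqref{braidRels} are exactly \eqref{P-H1}.
\item $\sigma_i\sigma_i^{-1}=\sigma_i^{-1}\sigma_i=1$ holds because \eqref{G^-1} is a two-sided inverse of $G_i$, which follows from \eqref{G^2}.
\item $\bar{e}_i^2=q^2\bar{e}_i$ and the commutation of the $\bar e_i$ are \eqref{P-H2}.
\item $\sigma_i\bar{e}_j=\bar{e}_j\sigma_i$ for $|i-j|\neq1$ comes from \eqref{P-H3} for $i\neq j$, and from $G_iF_i=F_iG_i$ in \eqref{G^2} for $i=j$.
\item $\sigma_i\sigma_j\bar{e}_i=\bar{e}_j\sigma_i\sigma_j$ is \eqref{P-H3}, and its $\sigma_j^{-1}$-version is Lemma~\ref{BTRels}\eqref{BP2^-1}.
\item The relation $\sigma_i\bar{e}_j\bar{e}_i=\bar{e}_j\sigma_i\bar{e}_j=\bar{e}_i\bar{e}_j\sigma_i$ is Lemma~\ref{BTRels}\eqref{TTD}.
\end{itemize}
Since the twisted presentation is phrased with the twisted generators and scalars already built in, no extra bookkeeping with $\tau_\beta$ is needed once we use that presentation.

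Next, $\Phi$ is surjective because the $G_i$ and $F_i$ generate $\Pa_n(p,q)$. The elements $\sigma_i^2-pq^2-p(p-1)\bar{e}_i$ and $\sigma_i\bar{e}_i-pq\bar{e}_i$ map to zero by \eqref{G^2}, so $\Phi$ factors through the quotient $Q:=TB_n(q^2)/\langle\sigma_i^2-pq^2-p(p-1)\bar e_i,\ \sigma_i\bar e_i-pq\bar e_i\rangle$.

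For the inverse map $\Psi:\Pa_n(p,q)\to Q$, I will send $G_i\mapsto\sigma_i$ and $F_i\mapsto\bar{e}_i$ and check the relations \eqref{G^2}--\eqref{P-H3} in $Q$.
\begin{itemize}
\item \eqref{P-H1}, \eqref{P-H2} and \eqref{P-H3} are among the defining relations of $TB_n(q^2)$.
\item $G_i^2$ and $G_iF_i$ in \eqref{G^2} hold by the imposed ideal.
\item $F_iG_i=pqF_i$ follows from $\bar e_i\sigma_i=\sigma_i\bar e_i$, the case $i=j$ of $\sigma_i\bar e_j=\bar e_j\sigma_i$.
\end{itemize}
Since $\Psi\circ\Phi$ and $\Phi\circ\Psi$ are the identity on generators ($\sigma_i^{-1}$ is determined by $\sigma_i$), the two algebras are isomorphic.

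The main care point is the relation $\sigma_i\sigma_j^{-1}\bar e_i=\bar e_j\sigma_i\sigma_j^{-1}$. The party braid monoid has no inverses, so this relation must be derived; Lemma~\ref{BTRels} supplies it, and it depends on \eqref{TTD}, which in turn uses $G_iF_i=pqF_i$ to absorb the $G$'s. I will also confirm that $\sigma_i^{-1}$ appears in $TB_n$ only through \eqref{TBM1} and \eqref{TBM2}, so that no further relations involving inverses need checking.
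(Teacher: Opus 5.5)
Your proposal is correct and follows the paper's proof. Like the paper, you verify the defining relations of the presentation of $TB_n(q^2)$ from Subsubsection~\ref{twistedTBn} using \eqref{G^-1} and Lemma~\ref{BTRels}. You also write out the inverse map $G_i\mapsto\sigma_i$, $F_i\mapsto\bar{e}_i$ on the quotient, which the paper leaves implicit in the phrase ``is the quotient''.
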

\begin{proof}
This follows directly from Eq.~\eqref{G^-1}, together with Lemma~\ref{BTRels} and the presentation of $TB_n(q^2)$ given in Subsubsection~\ref{twistedTBn}.
\end{proof}

\subsection{A linear basis}\label{SubsecBasisParty-Hecke}

The goal of this subsection is to give a linear basis $\G_n$ for $\Pa_n(p,q)$. To explain the elements of $\G_n$, we need the following notations.

Given $s\in\Sfr_n$, we set $G_s: =G_{i_1}\cdots G_{i_k}\in\Pa_n(p,q)$, where $s_{i_1}\cdots s_{i_k}$ is a reduced expression of $s$. In particular, $G_{s_j}= G_j$. Now, relations in \eqref{P-H1} and Matsumoto's theorem \cite{Matsumoto1964} imply that $G_s$ does not depend on the choice of the reduced expression. 
Now, for each pair of indexes $i,j\in\dbrack{1,n-1}$ with $i<j$, we set\begin{gather*}
F_{i,j}=F_{j,i}=G_i\cdots G_{j-2}F_{j-1}G_{j-2}^{-1}\cdots G_i^{-1}.
\end{gather*}

\begin{corollary}\label{RHLemma2&3}
The map sending $f_{i,j}\mapsto F_{i,j}$ defines a monoid homomorphism from $P_n$ to $\Pa_n(p,q)$. In consequence, each $F_{i,j}$ is a commuting idempotent satisfying $G_iF_{j,k}=F_{s_i(j),s_i(k)}G_i$ for all $i,j,k\in\dbrack{1,n}$ with $j<k$.
\end{corollary}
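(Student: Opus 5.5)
The plan is to get the statement from Proposition~\ref{TBnQuotient} rather than verifying relations by hand inside $\Pa_n(p,q)$.

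\textbf{Step 1: transport the relations of $TB_n$.} Recall from Subsubsection~\ref{SubSubSecTiedBraidMon} that $P_n$ embeds in $TB_n$ via $f_{i,j}\mapsto e_{i,j}$, with $e_{i,j}$ given by \eqref{eijTBn}. In the alternative presentation of $TB_n$, the elements $e_{i,j}$ satisfy \eqref{TSn5} together with the action relations \eqref{actionRelTBn}. Passing to the twisted monoid algebra $TB_n(q^2)=\CC^{\tau_\beta}[TB_n]$ and then applying the homomorphism $\sigma_i\mapsto G_i$, $\bar e_i\mapsto F_i$ of Proposition~\ref{TBnQuotient}, the image of $e_{i,j}$ is
\[G_i\cdots G_{j-2}F_{j-1}G_{j-2}^{-1}\cdots G_i^{-1}=F_{i,j}.\]
This uses that $\sigma_k^{-1}\mapsto G_k^{-1}$, which is given by \eqref{G^-1}.

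\textbf{Step 2: the second expression in \eqref{eijTBn}.} The equality of the two expressions in \eqref{eijTBn} holds in $TB_n$. It therefore transfers to $\Pa_n(p,q)$, giving
\[F_{i,j}=G_{j-1}^{-1}\cdots G_{i+1}^{-1}F_iG_{i+1}\cdots G_{j-1}.\]
Since $F_{i,j}$ is defined as $F_{j,i}$ symmetrically, the index convention $F_{j,i}=F_{i,j}$ matches $e_{j,i}=e_{i,j}$.

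\textbf{Step 3: the commutation relations.} Relation \eqref{actionRelTBn}, $\sigma_ie_{j,k}=e_{s_i(j),s_i(k)}\sigma_i$, contains no product of two $e$'s. Hence it carries no twisting factor, and its image is exactly $G_iF_{j,k}=F_{s_i(j),s_i(k)}G_i$.

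\textbf{Step 4: the $P_n$-relations \eqref{PnRels}.} These are the defining relations of $P_n$ in \cite[Theorem 2]{Fi2003}. I would derive them in $\Pa_n(p,q)$ by conjugation, mirroring how \eqref{TSn5} follows from \eqref{TSn2} in $TB_n$.
\begin{itemize}
\item Commutation $F_{i,j}F_{r,s}=F_{r,s}F_{i,j}$: choose a product of $G$'s conjugating both to generators $F_a,F_b$ and use $F_aF_b=F_bF_a$. If no single conjugation works, reduce via Step~3 to the case treated by Lemma~\ref{BTRels}\eqref{TTD}.
\item Triangle relations $F_{i,j}F_{i,k}=F_{i,j}F_{j,k}=F_{i,k}F_{j,k}$: conjugate to the adjacent case, where they become $F_iF_{i,i+2}=F_iF_{i+1}=F_{i,i+2}F_{i+1}$. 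These follow from Lemma~\ref{BTRels}\eqref{TTD} together with \eqref{P-H3}.
\item Idempotence of $F_{i,j}$: it is the conjugate of $F_{j-1}$, so its square is governed by \eqref{P-H2}. The quadratic relation is thus the one inherited from $F_{j-1}^2$, i.e.\ the relation carried by the twisting of $P_n(q^2)\subset TB_n(q^2)$.
\end{itemize}

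\textbf{Step 5: conclude.} With the relations of \eqref{PnRels} established, the presentation of $P_n$ yields that $f_{i,j}\mapsto F_{i,j}$ extends multiplicatively. The statement about commuting idempotents and the commutation rule with $G_i$ are then Steps~4 and~3 respectively.

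\textbf{Main obstacle.} I expect the main obstacle to be bookkeeping of the twisting in Step~4. Relations in \eqref{TSn5} with repeated standard arcs pick up powers of $q^2$ under Theorem~\ref{TwistedPresentation}. One must check, case by case, that the twisted versions of the triangle relations have equal scalars on both sides. They should, since $\beta$ counts common standard arcs and both sides produce the same partition $f_{\{i,j,k\}}$ with the same arc overlap. I would verify this directly on the adjacent case $i,i+1,i+2$, using Lemma~\ref{BTRels}\eqref{TTD}, before conjugating.
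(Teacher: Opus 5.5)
Your Steps 1--3 are exactly the paper's one-line argument (Proposition~\ref{TBnQuotient}, the formulas \eqref{eijTBn}, and the embedding $f_{i,j}\mapsto e_{i,j}$ of $P_n$ into $TB_n$). Step~4 spells out what the paper leaves implicit.

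\textbf{The gap is in Step~5.} The presentation \eqref{PnRels} contains $f_{i,j}^2=f_{i,j}$. But, as your own third bullet of Step~4 concedes, $F_{i,j}$ is conjugate to $F_{j-1}$, so \eqref{P-H2} gives $F_{i,j}^2=q^2F_{i,j}$. This is not a bookkeeping issue. The assignment $G_i\mapsto pq$, $F_i\mapsto q^2$ is a one-dimensional representation of $\Pa_n(p,q)$: all of \eqref{G^2}--\eqref{P-H3} hold, e.g.\ $(pq)^2=pq^2+p(p-1)q^2$. It sends every $F_{i,j}$ to $q^2$. So for $q^2\neq1$ the $F_{i,j}$ are not idempotents, and $f_{i,j}\mapsto F_{i,j}$ is not a monoid homomorphism. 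The statement cannot be proved as written, and the paper's proof passes over the same point.

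What your Steps~2--4 actually prove is the following:
\begin{itemize}
\item the $F_{i,j}$ pairwise commute;
\item $F_{i,j}^2=q^2F_{i,j}$;
\item the triangle relations $F_{i,j}F_{i,k}=F_{i,j}F_{j,k}=F_{i,k}F_{j,k}$ hold with no scalar;
\item $G_iF_{j,k}=F_{s_i(j),s_i(k)}G_i$.
\end{itemize}
Equivalently, $f_{i,j}\mapsto q^{-2}F_{i,j}$ is a monoid homomorphism $P_n\to\Pa_n(p,q)$, and the elements $q^{-2}F_{i,j}$ are commuting idempotents. The conjugation rule is unaffected by this rescaling. You should state and prove the result in this corrected form.

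\textbf{A second caution.} Do not rely on $\tau_\beta$ and Theorem~\ref{TwistedPresentation} to decide scalars, as you do in your Step~3 justification and your last paragraph. The map $\tau_\beta$ does not satisfy the twisting identity. Take $x=f_{1,3}$, $y=f_{1,2}$, $z=f_{2,3}$. Then $xy=yz=f_{\{1,2,3\}}$, with standard arcs $\{1,2\},\{2,3\}$. Hence $\tau_\beta(x,y)\tau_\beta(xy,z)=\delta$, while $\tau_\beta(x,yz)\tau_\beta(y,z)=1$, and these differ once $\delta\neq1$.

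The conclusions of Steps~2 and~3 survive anyway. Their derivations in $TB_n$ use only relations involving a single $e$, and these hold verbatim in $\Pa_n(p,q)$ by \eqref{P-H1}, \eqref{P-H3}, \eqref{G^-1} and Lemma~\ref{BTRels}\eqref{BP2^-1}.

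For the triangle relations, carry out the direct computation you propose. Lemma~\ref{BTRels}\eqref{TTD} and \eqref{G^2} give $F_1F_2G_1=G_1F_1F_2=pqF_1F_2$ and $G_2F_1F_2=pqF_1F_2$. Using $F_{1,3}=G_1F_2G_1^{-1}=G_2^{-1}F_1G_2$, this yields
\[F_1F_{1,3}=pq\,F_1F_2G_1^{-1}=F_1F_2,\qquad F_{1,3}F_2=pq\,G_2^{-1}F_1F_2=F_1F_2.\]
The general case then follows by conjugating with a suitable $G_w$ via Step~3.
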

\begin{proof}
This is a direct consequence of Proposition~\ref{TBnQuotient}, Equation \eqref{eijTBn}, and the fact that $P_n$ embeds into $TB_n$ by means of the map $f_{i,j}\mapsto e_{i,j}$.
\end{proof}
As in \cite[Section 3]{RyomHansen}, for each nonempty subset $B\subseteq\dbrack{1,n}$ and for each set partition $I=\{I_1,\ldots,I_k\}$ of $\dbrack{1,n}$, we set\[F_B=\prod_{i,j\in B,\,i<j}F_{i,j}\quad\text{and}\quad F_I=F_{I_1}\cdots F_{I_k},\]where $F_B=1$ whenever $|B|=1$.

\begin{proposition}\label{PREspanP-H}
The set of products $F_IG_s$, with $I\in P_n$ and $s\in\Sfr_n$, spans $\Pa_n(p,q)$.
\end{proposition}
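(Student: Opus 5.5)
We show that the linear span $V$ of the products $F_IG_s$, with $I\in P_n$ and $s\in\Sfr_n$, is a left ideal containing $1=F_{\emptyset\text{-partition}}G_1$. Here the trivial partition of singletons gives $F_I=1$ and $G_1=1$. Since the generators $G_1,\ldots,G_{n-1},F_1,\ldots,F_{n-1}$ generate $\Pa_n(p,q)$, it suffices to check that $F_kV\subseteq V$ and $G_kV\subseteq V$ for every $k\in\dbrack{1,n-1}$. Then $V$ contains every monomial in the generators, so $V=\Pa_n(p,q)$.

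\emph{Left multiplication by $F_k$.} By Corollary~\ref{RHLemma2&3}, the assignment $f_{i,j}\mapsto F_{i,j}$ is a monoid homomorphism $P_n\to\Pa_n(p,q)$. By Proposition~\ref{normalP_n}, $F_I$ is the image of $I$ under this homomorphism. Indeed, $F_B$ as defined is the product of the images of all $f_{i,j}$ with $i,j\in B$, and this product equals $f_B$ in $P_n$ by the idempotence and commutativity relations \eqref{PnRels}. Hence $F_kF_I=F_{f_k\vee I}$, and therefore $F_kF_IG_s=F_{f_kI}G_s\in V$. We also need $F_k=F_{k,k+1}$, which holds by definition.

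\emph{Left multiplication by $G_k$.} Corollary~\ref{RHLemma2&3} gives $G_kF_{j,l}=F_{s_k(j),s_k(l)}G_k$. Consequently $G_kF_I=F_{s_k(I)}G_k$, where $s_k(I)$ denotes the action of $\Sfr_n$ on $P_n$. So $G_kF_IG_s=F_{s_k(I)}G_kG_s$, and it remains to show that $F_JG_kG_s\in V$ for every $J\in P_n$.

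\emph{The Hecke-type step.} If $\ell(s_ks)>\ell(s)$, then $G_kG_s=G_{s_ks}$ by Matsumoto's theorem, as recalled before Corollary~\ref{RHLemma2&3}, and we are done. Otherwise write $s=s_kt$ with $\ell(t)=\ell(s)-1$, so that $G_s=G_kG_t$. Using \eqref{G^2},
\[
G_kG_s=G_k^2G_t=pq^2G_t+p(p-1)F_kG_t .
\]
Multiplying by $F_J$ gives $F_JG_kG_s=pq^2F_JG_t+p(p-1)F_{Jf_k}G_t$, which lies in $V$.

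The only genuine input is the conjugation relation $G_kF_{j,l}=F_{s_k(j),s_k(l)}G_k$, together with the identification of $F_I$ with the image of $I$. Both come from Corollary~\ref{RHLemma2&3}, so no real obstacle remains. The one point requiring care is checking that $F_B$, taken as the product over all pairs in $B$, agrees with the image of $f_B$. This follows because the image of $P_n$ is a commutative idempotent monoid satisfying \eqref{PnRels}.
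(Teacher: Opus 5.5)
Your left-ideal argument is sound in structure, but it takes a different route from the paper. The paper's proof is one line. Since $TB_n=P_n\rtimes B_n$, the algebra $TB_n(q^2)$ is spanned by elements $e\beta$ with $e\in P_n$ and $\beta\in B_n$. Through the surjection of Proposition~\ref{TBnQuotient}, $\Pa_n(p,q)$ is then spanned by products $F_IG_\beta$ with $\beta$ an arbitrary braid, and the quadratic relation \eqref{G^2}, together with \eqref{G^-1}, reduces $G_\beta$ to permutation elements $G_s$.

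You never leave $\Pa_n(p,q)$. You avoid the semidirect-product theorem for $TB_n$ and never handle negative braid words, using only the conjugation relation $G_kF_{j,l}=F_{s_k(j),s_k(l)}G_k$, the quadratic relation and Matsumoto's theorem. Your version also makes explicit the step the paper leaves implicit: how the $F_k$'s created inside a word by the quadratic relation are pushed to the left. The paper gains brevity by importing the normal form of $TB_n$.

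One correction is needed. The identity $F_kF_I=F_{f_k\vee I}$ is false when $q^2\neq1$, and so is your claim that the image of $P_n$ is an idempotent monoid. By \eqref{P-H2}, $F_k^2=q^2F_k$, so for $I=f_k$ you get $F_kF_I=q^2F_{f_kI}$. Similarly, $F_{\{1,2,3\}}=F_{1,2}F_{1,3}F_{2,3}=q^2F_1F_2$, whereas the image of $f_{1,2}f_{2,3}$ would be $F_1F_2$. The words ``monoid homomorphism'' and ``commuting idempotent'' in Corollary~\ref{RHLemma2&3} have the same defect.

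What is true is that $E_{i,j}:=q^{-2}F_{i,j}$ are commuting idempotents satisfying \eqref{PnRels}. Equivalently, $\sigma_i\mapsto G_i$, $e_i\mapsto q^{-2}F_i$ defines a homomorphism $\CC[TB_n]\to\Pa_n(p,q)$. This holds because every relation of $TB_n$ other than $e_i^2=e_i$ is homogeneous in the $e_i$'s. Hence $F_I=q^{2N(I)}E_I$, where $N(I)=\sum_{B\in I}\binom{|B|}{2}$ and $E_I$ is the image of $I$. Consequently $F_kF_I$ and $F_JF_k$ are nonzero scalar multiples of $F_{f_kI}$ and $F_{Jf_k}$.

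Since you only need membership in $V$, replacing your equalities by ``up to a nonzero power of $q^2$'' repairs the proof. The relation $G_kF_I=F_{s_k(I)}G_k$ holds exactly as you state it. Finally, writing $G_1$ for the identity clashes with $G_1=G_{s_1}$; write $G_{\mathrm{id}}=1$ instead.
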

\begin{proof}
This is a consequence of the quadratic relation in \eqref{G^2}, Proposition~\ref{TBnQuotient} and the fact that $TB_n$ is the semi direct product $P_n\rtimes B_n$.
\end{proof}

Define\[\G_n:=\{F_IG_s\mid I\in P_n\text{ and }s\in\Sfr_n\text{ with }(I,s)\text{ coprime}\}.\]Observe that, by virtue of Proposition~\ref{partyNF}, the elements of $\G_n$ are parametrized by $\P_n$.

To establish that $\G_n$ spans $\Pa_n(p,q)$, we first need to introduce some additional notations and three technical lemmas.

For each $i,j\in\dbrack{1,n}$ with $i<j$, we set\begin{gather*}
G_{i,j}=G_{j,i}=G_i\cdots G_{j-2}G_{j-1}G_{j-2}^{-1}\cdots G_i^{-1}.
\end{gather*}
Observe that, as generators $G_i$ satisfy the braid relations, then, due to \eqref{genSigma}, the map $\sigma_{i,j}\mapsto G_{i,j}$ defines a group homomorphism from $B_n$ to $\Pa_n(p,q)$.

\begin{lemma}\label{DualGi}
For each $i,j,k,l$, we have:
\begin{enumerate}
\item $G_{i,j}^2=pq^2+p(p-1)F_{i,j}$.\label{DualGi1}
\item $G_{i,j}F_{k,l}=F_{s_{i,j}(k),s_{i,j}(l)}G_{i,j}$.\label{DualGi2}
\item $G_{i,j}F_{i,j}=F_{i,j}G_{i,j}=pqF_{i,j}$.\label{DualGi3}
\item $G_{i,j}^{-1}=p^{-1}q^{-2}G_{i,j}+q^{-3}(p^{-1}-1)F_{i,j}$.\label{DualGi4}
\end{enumerate}
\end{lemma}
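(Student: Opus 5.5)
Throughout, $i<j$ and $k<l$. The plan is to reduce every identity to the elementary case $j=i+1$ by conjugating with the invertible element
\[
W:=G_i\cdots G_{j-2},
\]
so that $G_{i,j}=WG_{j-1}W^{-1}$ and, by definition, $F_{i,j}=WF_{j-1}W^{-1}$.

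\textbf{Items \eqref{DualGi1}, \eqref{DualGi3}, \eqref{DualGi4}.} These follow by conjugating the elementary relations with $W$.
\begin{itemize}
\item Conjugating \eqref{G^2} gives
\[
G_{i,j}^2=W\big(pq^2+p(p-1)F_{j-1}\big)W^{-1}=pq^2+p(p-1)F_{i,j},
\]
which is \eqref{DualGi1}.
\item Conjugating $G_{j-1}F_{j-1}=F_{j-1}G_{j-1}=pqF_{j-1}$ gives \eqref{DualGi3}.
\item Conjugating the inverse formula \eqref{G^-1} gives \eqref{DualGi4}. Alternatively, multiply the proposed right-hand side by $G_{i,j}$ and use \eqref{DualGi1} and \eqref{DualGi3}.
\end{itemize}
The only structural input is that conjugation by $W$ is an algebra automorphism, so scalars are fixed.

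\textbf{Item \eqref{DualGi2}: elementary factors.} I would first express how a single $G_m$ or $G_m^{-1}$ moves past any $F_{k,l}$. Corollary \ref{RHLemma2&3} gives
\[
G_mF_{k,l}=F_{s_m(k),s_m(l)}G_m.
\]
Hence
\[
G_m^{-1}F_{s_m(k),s_m(l)}=F_{k,l}G_m^{-1}.
\]
Replacing $(k,l)$ by $(s_m(k),s_m(l))$ and using $s_m^2=1$, this becomes $G_m^{-1}F_{k,l}=F_{s_m(k),s_m(l)}G_m^{-1}$. So both $G_m$ and $G_m^{-1}$ act on the indices through $s_m$.

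\textbf{Item \eqref{DualGi2}: the product.} Write $G_{i,j}$ as the product
\[
G_i\cdots G_{j-2}\,G_{j-1}\,G_{j-2}^{-1}\cdots G_i^{-1}.
\]
Pushing $F_{k,l}$ from right to left through each factor applies the transposition of that factor to the indices, one factor at a time. After all factors, the indices have been transformed by
\[
s_i\cdots s_{j-2}\,s_{j-1}\,s_{j-2}\cdots s_i=s_{i,j}.
\]
This identity holds in $\Sfr_n$, via the conjugate form of the expression in \eqref{DualSn1}. It gives $G_{i,j}F_{k,l}=F_{s_{i,j}(k),s_{i,j}(l)}G_{i,j}$. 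Throughout I use the convention $F_{l,k}=F_{k,l}$, which keeps the index bookkeeping consistent.

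\textbf{Main obstacle.} The main point to check is the order of composition in item \eqref{DualGi2}. The permutations are applied innermost first, so the composite is the word read in the correct direction. Because the word is a palindrome, the composite is $s_{i,j}$ either way, which removes the concern. A smaller check is that $s_i\cdots s_{j-2}s_{j-1}s_{j-2}\cdots s_i$ really equals the transposition $(i\;j)$. This is a standard symmetric-group computation.
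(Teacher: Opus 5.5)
Your proposal is correct and follows essentially the paper's own argument. Items (1), (3) and (4) come from conjugating the elementary relations \eqref{G^2} (and \eqref{G^-1}) by $W=G_i\cdots G_{j-2}$, and item (2) comes from iterating the commutation rule of Corollary~\ref{RHLemma2&3} through the factors of $G_{i,j}$; you just make explicit the details the paper leaves implicit, namely the rule for $G_m^{-1}$ and the identity $s_i\cdots s_{j-2}s_{j-1}s_{j-2}\cdots s_i=s_{i,j}$.
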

\begin{proof}
We show \eqref{DualGi1} as follows
\[\begin{array}{rclll}
G_{i,j}^2&\!\!\!=\!\!\!&G_i\cdots G_{j-2}G_{j-1}^2G_{j-2}^{-1}\cdots G_i^{-1}\\[0.15cm]
&\!\!\!\stackrel{\eqref{G^2}}{=}\!\!\!&G_i\cdots G_{j-2}(pq^2+p(p-1)F_{j-1})G_{j-2}^{-1}\cdots G_i^{-1}&\!\!=\!\!&pq^2+p(p-1)F_{i,j}.
\end{array}\]We get \eqref{DualGi2} and \eqref{DualGi3} as direct consequences of \eqref{G^2} and Corollary~\ref{RHLemma2&3}. Claim \eqref{DualGi4} is a direct consequence of claims \eqref{DualGi1} and \eqref{DualGi3}.
\end{proof}

\begin{lemma}\label{spanLem1}
Let $i,j,k\in\dbrack{1,n}$ and let $\tau\in\Sfr_n$ such that $s_{i,j}=\tau s_k\tau^{-1}$. Then $F_{i,j}G_\tau G_kG_\tau^{-1}=pqF_{i,j}$.
\end{lemma}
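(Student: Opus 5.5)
The plan is to rewrite $F_{i,j}$ as a conjugate of $F_k$ by $G_\tau$. After that substitution the product collapses to $G_\tau(F_kG_k)G_\tau^{-1}$, and the quadratic relation \eqref{G^2} finishes the argument. Throughout, $k\in\dbrack{1,n-1}$, so $s_k$ is an elementary transposition. By the definition of $F_{a,b}$ with $b=a+1$, we have $F_{k,k+1}=F_k$.

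\emph{Step 1: conjugation formula.} For every $\tau\in\Sfr_n$ and every $a<b$ I will prove
\[
G_\tau F_{a,b}G_\tau^{-1}=F_{\tau(a),\tau(b)}.
\]
I argue by induction on $\ell(\tau)$, using a reduced expression $\tau=s_{i_1}\cdots s_{i_m}$, so that $G_\tau=G_{i_1}\cdots G_{i_m}$. The case $\ell(\tau)=0$ is trivial. For the inductive step, Corollary~\ref{RHLemma2&3} gives $G_iF_{a,b}=F_{s_i(a),s_i(b)}G_i$, hence $G_iF_{a,b}G_i^{-1}=F_{s_i(a),s_i(b)}$. Here the convention $F_{j,i}=F_{i,j}$ absorbs any reordering of the indices. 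Peeling off the generators from the inside out, first $G_{i_m}$ and last $G_{i_1}$, applies $s_{i_m}$ first and $s_{i_1}$ last to the indices. The result is therefore $F_{\tau(a),\tau(b)}$. The only point requiring care is that the order of composition matches this order of peeling, and it does.

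\emph{Step 2: identify the indices.} In $\Sfr_n$ we have $\tau s_k\tau^{-1}=s_{\tau(k),\tau(k+1)}$. The hypothesis $s_{i,j}=\tau s_k\tau^{-1}$ therefore gives $\{\tau(k),\tau(k+1)\}=\{i,j\}$. Step 1 with $(a,b)=(k,k+1)$ then yields
\[
F_{i,j}=G_\tau F_kG_\tau^{-1}.
\]

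\emph{Step 3: compute.} Substituting,
\[
F_{i,j}G_\tau G_kG_\tau^{-1}=G_\tau F_kG_\tau^{-1}G_\tau G_kG_\tau^{-1}=G_\tau F_kG_kG_\tau^{-1}.
\]
By \eqref{G^2}, $F_kG_k=pqF_k$, so the right-hand side equals $pq\,G_\tau F_kG_\tau^{-1}=pqF_{i,j}$, which is the claim.

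The main obstacle is Step 1. The substance is already contained in Corollary~\ref{RHLemma2&3}, so what remains is bookkeeping: verifying the order of composition and the symmetric convention $F_{j,i}=F_{i,j}$. Everything else is immediate from \eqref{G^2}.
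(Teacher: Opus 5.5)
Your proof is correct and takes essentially the same route as the paper. You rewrite $F_{i,j}$ as $G_\tau F_kG_\tau^{-1}$ (equivalently $G_\tau^{-1}F_{i,j}G_\tau=F_{\tau^{-1}(i),\tau^{-1}(j)}=F_k$), then collapse the product using $F_kG_k=pqF_k$. The only difference is that you derive the conjugation formula explicitly, by induction on a reduced word from $G_iF_{a,b}=F_{s_i(a),s_i(b)}G_i$, whereas the paper just cites Lemma~\ref{DualGi} for that step.
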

\begin{proof}
Since $\tau^{-1}s_{i,j}\tau=s_k$, then, by Lemma~\ref{DualGi}\eqref{DualGi2} and Lemma~\ref{DualGi}\eqref{DualGi3}, we get the result as follows:\[\begin{array}{rcl}
F_{i,j}G_\tau G_kG_\tau^{-1}&=&G_\tau(G_\tau^{-1}F_{i,j}G_\tau) G_kG_\tau^{-1}\\
&=&G_\tau F_{\tau^{-1}(i),\tau^{-1}(j)}G_kG_\tau^{-1}\\
&=&G_\tau F_kG_kG_\tau^{-1}\\
&=&pqG_\tau F_kG_\tau^{-1}\,\,\,=\,\,\,pqF_{i,j}.
\end{array}\]
\end{proof}

\begin{lemma}\label{spanLem2}
Let $\nu\in\Sfr_n$ and let $(i,j)$ be an inversion of $\nu$. Then, there is $\tau\in\Sfr_n$ and $k\in\dbrack{1,n-1}$, such that $G_\nu=G_\tau G_kG_\tau^{-1}G_{s_{i,j}\nu}$ with $s_{i,j}=\tau s_k\tau^{-1}$.
\end{lemma}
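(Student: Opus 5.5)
We use the exchange property of the Coxeter group $\Sfr_n$: if $(i,j)$ is an inversion of $\nu$, then $s_{i,j}$ is a proper left divisor of $\nu$ (Remark after Proposition~\ref{partyNF}), so $\nu':=s_{i,j}\nu$ satisfies $\ell(\nu')<\ell(\nu)$. Since $s_{i,j}$ is a reflection, $\ell(\nu)=\ell(\nu')+1$ exactly. Fix a reduced expression $\nu'=s_{a_1}\cdots s_{a_m}$ with $m=\ell(\nu)-1$. By the strong exchange condition applied to a reduced word of $\nu$, there is a reduced expression $\nu=s_{b_1}\cdots s_{b_{r}}s_k s_{b_{r+1}}\cdots s_{b_{m}}$ such that deleting the letter $s_k$ gives a reduced expression of $\nu'$. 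Write $\tau:=s_{b_1}\cdots s_{b_r}$ and $\rho:=s_{b_{r+1}}\cdots s_{b_m}$. Then $\nu=\tau s_k\rho$ and $\nu'=\tau\rho$ with $\ell(\nu')=\ell(\tau)+\ell(\rho)$, and also $\ell(\nu)=\ell(\tau)+1+\ell(\rho)$.

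Next we identify the reflection. We have $s_{i,j}=\nu\nu'^{-1}=\tau s_k\rho\rho^{-1}\tau^{-1}=\tau s_k\tau^{-1}$, as required.

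We then pass to the algebra. Since all the words involved are reduced, $G_s$ is independent of the reduced expression (Matsumoto, as noted above) and is multiplicative on length-additive products. This gives $G_\nu=G_\tau G_kG_\rho$ and $G_{\nu'}=G_\tau G_\rho$, so $G_\rho=G_\tau^{-1}G_{\nu'}$. Here $G_\tau$ is invertible by \eqref{G^-1}. Substituting,
\[G_\nu=G_\tau G_kG_\tau^{-1}G_{s_{i,j}\nu},\]
which is the claim.

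The main obstacle is producing the factorization $\nu=\tau s_k\rho$ with both products length-additive. I would take it from the strong exchange condition: it asserts that $\ell(s_{i,j}\nu)<\ell(\nu)$ holds if and only if $s_{i,j}\nu$ is obtained from some (equivalently, every) reduced word of $\nu$ by deleting one letter. After deletion the word has length $\ell(\nu)-1=\ell(\nu')$, so it is reduced; hence both the prefix $\tau$ and the suffix $\rho$ come from reduced subwords, and length-additivity holds automatically.
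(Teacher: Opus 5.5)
\textbf{There is a genuine gap, and it is the same one as in the paper's proof.} Your route matches the paper's: apply strong exchange to a reduced word to get $\nu=\tau s_k\epsilon$, identify $s_{i,j}=\tau s_k\tau^{-1}$, and write $G_\nu=G_\tau G_kG_\tau^{-1}(G_\tau G_\epsilon)$. The argument breaks at the last step, $G_\tau G_\epsilon=G_{s_{i,j}\nu}$. That step needs the word obtained by deleting the exchanged letter to be reduced. You justify it with ``since $s_{i,j}$ is a reflection, $\ell(\nu)=\ell(\nu')+1$ exactly'', and this is false. Multiplying by a reflection changes the length by an odd amount, not necessarily by one. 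The strong exchange condition only says that the deleted word \emph{represents} $s_{i,j}\nu$, not that it is reduced. For example, take $n=3$, $\nu=s_1s_2s_1$ and its inversion $(1,3)$. Then $s_{1,3}\nu=1$, the deleted word is $s_1s_1$ (with $\tau=s_1$, $k=2$, $\epsilon=s_1$), and $G_\tau G_\epsilon=G_1^2=pq^2+p(p-1)F_1$, which is not $G_{s_{1,3}\nu}=1$. The paper's proof takes exactly this unjustified step when it writes $G_\tau G_\epsilon=G_{s_{i,j}\nu}$.

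\textbf{The statement itself fails in this example, so the gap cannot be closed as stated.} The only pairs with $\tau s_k\tau^{-1}=s_{1,3}$ are $(s_1,2)$, $(s_1s_2,2)$, $(s_2,1)$ and $(s_2s_1,1)$. These give $G_\tau G_kG_\tau^{-1}\in\{G_1G_2G_1^{-1},\,G_2G_1G_2^{-1}\}$. Since $G_1G_2G_1=G_2G_1G_2$, equality with $G_\nu$ would force $G_1^2=1$ or $G_2^2=1$. Under the representation of Theorem~\ref{TR} with $m\ge 2$, $\psi_{p,q}(G_1^2)$ acts by $p^2q^2$ on basis vectors whose first two upper indices agree, and by $pq^2$ otherwise. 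So $G_1^2=1$ (and likewise $G_2^2=1$) holds only when $p=1$ and $q^2=1$.

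\textbf{What survives, and how to repair the application.} The lemma is true, with your proof verbatim, under the extra hypothesis $\ell(s_{i,j}\nu)=\ell(\nu)-1$. For its use in Proposition~\ref{spanP-H}, do not try to identify $G_\tau G_\epsilon$ with $G_{s_{i,j}\nu}$. Lemma~\ref{spanLem1} already gives $F_{i,j}G_\nu=pq\,F_{i,j}G_\tau G_\epsilon$. Here $G_\tau G_\epsilon$ is a word of length $\ell(\nu)-1$, which \eqref{G^2} expands into terms $F_JG_w$ with $\ell(w)<\ell(\nu)$. An induction on length then works. The cost is that you get linear combinations rather than a single scalar multiple; for instance $F_{1,3}G_{s_1s_2s_1}=p^2q^3F_{1,3}+p^2q(p-1)F_{1,3}F_1$.
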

\begin{proof}
Let $r=\ell(\nu)$, that is, $\nu=s_{i_1}\cdots s_{i_r}$ for some reduced expression. Then, due to the strong exchange condition \cite[Subsection 5.8]{Humph1997}, there is an $m\in\dbrack{1,r}$ such that $s_{i,j}\nu=s_{i_1}\cdots s_{i_{m-1}}s_{i_{m+1}}\cdots s_{i_r}$. Let $\tau=s_{i_1}\cdots s_{i_{m-1}}$, $k=i_m$, and $\epsilon=s_{i_{m+1}}\cdots s_{i_r}$. Thus, we get\[\nu=\tau s_k\epsilon,\qquad s_{i,j}\nu=\tau\epsilon,\qquad s_{i,j}=(s_{i,j}\nu)\nu^{-1}=\tau\epsilon(\tau s_k\epsilon)^{-1}=\tau s_k\tau^{-1}.\]Therefore $G_\nu=G_\tau G_kG_\epsilon=G_\tau G_kG_\tau^{-1}G_\tau G_\epsilon=G_\tau G_kG_\tau^{-1}G_{s_{i,j}\nu}$.
\end{proof}

\begin{proposition}\label{spanP-H}
The set $\G_n$ spans $\Pa_n(p,q)$.
\end{proposition}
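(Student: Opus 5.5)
By Proposition~\ref{PREspanP-H}, every element of $\Pa_n(p,q)$ is a linear combination of products $F_IG_\nu$ with $I\in P_n$ and $\nu\in\Sfr_n$. It therefore suffices to show that each such product lies in the span of $\G_n$. I will argue by induction on $\ell(\nu)$, with $I$ allowed to vary. If $(I,\nu)$ is already coprime, then $F_IG_\nu\in\G_n$ and there is nothing to do. Otherwise there is a proper left divisor $s$ of $\nu$ with $Is=I$.

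The first step is to reduce this to a single transposition. Every nontrivial left divisor of $\nu$ has a left divisor that is a reflection $s_{i,j}$ with $(i,j)$ an inversion of $\nu$. So I want to find an inversion $(i,j)$ of $\nu$ with $i,j$ in the same block of $I$. If $Is=I$ in $\P_n$, the normal-form argument of Proposition~\ref{partyNF} shows that $s$ is a product of transpositions $s_{a,b}$ with $f_{a,b}$ dividing $I$, that is, $s$ lies in $\Sfr_I$. A nontrivial element of $\Sfr_I$ that left-divides $\nu$ has some inversion $(i,j)$ with $i,j$ in a common block of $I$, and every inversion of a left divisor is an inversion of $\nu$. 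This yields the required pair $(i,j)$, and then $F_I=F_IF_{i,j}$ by Corollary~\ref{RHLemma2&3}, since $f_{i,j}$ divides $I$.

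Next, Lemma~\ref{spanLem2} provides $\tau$ and $k$ with $G_\nu=G_\tau G_kG_\tau^{-1}G_{s_{i,j}\nu}$ and $s_{i,j}=\tau s_k\tau^{-1}$. Lemma~\ref{spanLem1} then gives
\[F_IG_\nu=F_IF_{i,j}G_\tau G_kG_\tau^{-1}G_{s_{i,j}\nu}=pq\,F_IG_{s_{i,j}\nu}.\]
Since $(i,j)$ is an inversion of $\nu$, we have $\ell(s_{i,j}\nu)<\ell(\nu)$, and the induction hypothesis applies to $F_IG_{s_{i,j}\nu}$. The base case $\nu=1$ holds because $(I,1)$ is always coprime.

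The main obstacle is the combinatorial step: showing that failure of coprimality always produces an inversion of $\nu$ lying inside a block of $I$. I would handle it as described, using the description of the stabilizer $\{s\mid Is=I\}=\Sfr_I$ from the proof of Proposition~\ref{partyNF} together with the standard fact that left divisors correspond to subsets of inversions. Everything else is a direct application of Lemmas~\ref{spanLem1} and~\ref{spanLem2}. Note that the coefficient $pq$ is invertible, but this plays no role in the spanning argument.
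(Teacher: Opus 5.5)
Your proposal is correct and follows the paper's proof: reduce to products $F_IG_\nu$, then repeatedly strip off an inversion $(i,j)$ of $\nu$ with $i,j$ in a common block of $I$ via Lemmas~\ref{spanLem2} and~\ref{spanLem1}, lowering $\ell(\nu)$. You also make explicit a step the paper leaves implicit: a non-coprime pair always has such an inversion, because a nontrivial $s\in\Sfr_I$ that left-divides $\nu$ contributes a block-internal inversion to the inversion set of $\nu$.

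One small caveat, inherited from the paper: since $F_{i,j}^2=q^2F_{i,j}$, the identity $F_I=F_IF_{i,j}$ is off by a factor of $q^2$. Write instead $F_I=F'F_{i,j}$, where $F'$ is the product of the remaining commuting factors of $F_I$; your identity $F_IG_\nu=pq\,F_IG_{s_{i,j}\nu}$ then follows exactly as you wrote it.
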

\begin{proof}
Due to Proposition~\ref{PREspanP-H}, it is enough to show that each product $F_IG_\nu$ can be written as $\alpha F_IG_{\nu'}$ for some $\alpha\in\CC$, where the pair $(I,\nu')$ is coprime.

Let $(i,j)$ be an inversion of $\nu$ satisfying $I=If_{i,j}$. Due to Corollary~\ref{RHLemma2&3} and Lemma~\ref{spanLem2}, we have $F_IG_\nu=F_IF_{i,j}G_\tau G_kG_\tau^{-1}G_{s_{i,j}\nu}$ for some $k\in\dbrack{1,n-1}$ and $\tau\in\Sfr_n$ such that $s_{i,j}=\tau s_k\tau^{-1}$. Then, by applying Lemma~\ref{spanLem1}, we get $F_IG_\nu=wF_IF_{i,j}G_{s_{i,j}\nu}=wF_IG_{s_{i,j}\nu}$. Since $\ell(s_{i,j}\nu)<\ell(\nu)$, we can repeat the process for all inversions of $\nu$ occurring in $I$, until the length of the resulting permutation is minimal. This proves the proposition.
\end{proof}

\begin{remark}\label{coprimeBElems}
By Proposition~\ref{partyNF}, if $(I,v)$ and $(J,w)$ are two distinct coprime pairs, then the products $G_vF_I$ and $G_wF_J$ are necessarily distinct. In other words, the coprimality condition ensures that each element in $\G_n$ corresponds to a unique coprime pair, and thus no two different pairs can yield the same product. This guarantees that $\G_n$ consists of distinct elements.
\end{remark}

Next, we introduce a deformation of Tanabe's tensor representation for the party algebra. See \cite[Subsection 2.1]{Ta1997} and compare with \cite[Proposition 3.1]{AgOr2008}. This representation will be used to show that $\G_n$ is linearly independent.

Let $m$ be a positive integer, and let $V$ be the $\CC$-vector space spanned by $B=\{v_i^r\mid i,r\in\dbrack{1,m}\}$. As usual, we denote by $B^{\otimes n}$ the standard basis of $V^{\otimes n}$ associated to $B$, consisting of the vectors:\[v_I^R:=v_{i_1}^{r_1}\otimes\cdots\otimes v_{i_n}^{r_n},\]
where $I=(i_1,\ldots,i_n)$ and $R=(r_1,\ldots,r_n)\in\dbrack{1,m}^n$. When there is no risk of confusion, we simply write $v_{i_1\ldots i_n}^{r_1\ldots r_n}$ instead of $v_{(i_1,\ldots,i_n)}^{(r_1,\ldots,r_n)}$.

Define $\wF,\wG\in\End(V^{\otimes2})$ by
\[\wF(v_i^r\otimes v_j^s)=
\begin{cases}
q^{2}v_i^r\otimes v_j^s&\text{if }r=s\\
0&\text{if }r\neq s,
\end{cases}\qquad\wG(v_i^r\otimes v_j^s)=
\begin{cases}
pq\,v_j^s\otimes v_i^r&\text{if }r=s\\
pq\,v_j^s\otimes v_i^r&\text{if }r\neq s,\,i>j\\
q\sqrt{ p}\,v_j^s\otimes v_i^r&\text{if }r\neq s,\,i=j\\
qv_j^s\otimes v_i^r&\text{if }r\neq s,\,i<j.
\end{cases}\]
These operators satisfy the following Party-Hecke relations:
\begin{equation}\label{FGtilde}
\wF^{\,2}=q^{2}\wF,\qquad\wG\wF=\wF\wG=pq\wF,\qquad\wG^{\,2}=pq^{2}+p(p-1)\wF.\end{equation}

For $i\in\dbrack{1,n-1}$, define $\wF_i$ (resp. $\wG_i$) as the endomorphism of $V^{\otimes n}$ acting by $\wF$ (resp. $\wG$) on the tensor factors in positions $(i,i+1)$, and as the identity otherwise. Evidently, we have:
\begin{equation}\label{ijkltilde}
\wF_k\wF_l=\wF_l\wF_k,\quad\wF_i\wG_j=\wG_j\wF_i\quad\text{and}\quad\wG_i\wG_j=\wG_j\wG_i\quad\text{for $|i-j|>1$.}
\end{equation}

\begin{remark}\label{UpToS}
Since each $\wG_i$ acts, up to a scalar, as the elementary transposition $s_i$, it follows that there exist scalars $\lambda_{I,R}\in\CC$ such that\[(\wG_{i_1}\cdots\wG_{i_j})(v_I^R)=\lambda_{I,R}v_{s(I)}^{s(R)},\quad\text{where}\quad s:=s_{i_1}\cdots s_{i_j}.\]
\end{remark}

\begin{theorem}\label{TR}
The mapping $G_i\mapsto\wG_i$, $F_i\mapsto \wF_i$ defines a representation $\psi_{p,q}$ of $\Pa_n(p,q)$ in $V^{\otimes n}$. In particular, $\psi_{1,q}$ is the Tanabe representation.
\end{theorem}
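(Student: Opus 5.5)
The plan is to check that the operators $\wF_i,\wG_i$ satisfy every defining relation of $\Pa_n(p,q)$ in the presentation \eqref{G^2}--\eqref{P-H3}. The universal property of the presentation then gives the representation $\psi_{p,q}$. The one structural fact used throughout is that each $\wG_i$ sends a basis vector $v_I^R$ to a scalar multiple of the vector obtained by swapping the tensor factors in positions $i,i+1$ (as in Remark~\ref{UpToS}). Moreover, that scalar depends only on the ordered pair of basis vectors $(v_a^x,v_b^y)$ that are swapped, namely on whether $x=y$ and on the relative order of $a,b$. Similarly, $\wF_i$ is diagonal in the basis $B^{\otimes n}$, with eigenvalue $q^2$ or $0$ according to whether the upper indices in positions $i,i+1$ agree.

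The first step is the local relations \eqref{G^2} and \eqref{P-H2}, which reduce to \eqref{FGtilde} on $V^{\otimes 2}$. I would check these case by case on $v_i^r\otimes v_j^s$.
\begin{itemize}
\item If $r=s$, then $\wG^2$ acts by $p^2q^2=pq^2+p(p-1)q^2$, and $\wG\wF=\wF\wG=pq\wF$.
\item If $r\neq s$ and $i\neq j$, the two passes contribute $pq$ and $q$ in some order, so $\wG^2=pq^2$, while $\wF=0$.
\item If $r\neq s$ and $i=j$, then $\wG^2=(q\sqrt p)^2=pq^2$.
\end{itemize}
The relation $\wF^2=q^2\wF$ is immediate. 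The far-commutation relations are \eqref{ijkltilde}. Finally, $\wF_i\wF_j=\wF_j\wF_i$ holds for all $i,j$ because both operators are diagonal.

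The second step is the mixed relation $G_iG_jF_i=F_jG_iG_j$ for $|i-j|=1$. Apply both sides to $v_I^R$. On each side, $\wG_i\wG_j$ contributes the same scalar times $v_{s(I)}^{s(R)}$, where $s=s_is_j$. The remaining point is that the color test performed by $\wF_i$ before the permutation coincides with the test performed by $\wF_j$ after it. Both operators scale by $q^2$, so only the conditions need to match. This is exactly the combinatorial content of \eqref{Par3}: $s_is_j$ carries positions $\{i,i+1\}$ onto $\{j,j+1\}$, so $r_i=r_{i+1}$ if and only if the colors in positions $j,j+1$ of $s(R)$ agree.

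The third step, which I expect to be the main obstacle, is the braid relation on three tensor factors. Both sides send $v_{abc}^{xyz}$ to a multiple of $v_{cba}^{zyx}$, so only the scalars need comparing. Along either reduced word, each of the three pairs of original vectors is swapped exactly once. When a pair is swapped, the vector that was originally on the left is still on the left. Hence both scalars equal $\prod c(u,w)$, taken over the three pairs $(u,w)$ in their original left-to-right order, where $c(u,w)$ is the scalar in the definition of $\wG$. This product is symmetric in the choice of reduced word, so the braid relation holds.

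Finally, for $p=1$ all swaps carry the scalar $q$ and $\wF=q^2\cdot(\text{color projection})$. After the rescaling $G_i\mapsto q^{-1}G_i$, $F_i\mapsto q^{-2}F_i$ that identifies $\Pa_n(1,q)$ with the party algebra, these are exactly Tanabe's operators: the transposition of factors, and the projection onto equal colors \cite[Subsection 2.1]{Ta1997}. Therefore $\psi_{1,q}$ is the Tanabe representation.
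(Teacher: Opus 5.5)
\textbf{There is a genuine gap, and it also affects the paper's own proof.}

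Your argument breaks down in the first step, at the claim that on the sector $r=s$ one has $\wG\wF=\wF\wG=pq\wF$. The trouble is that $\wF$ only compares upper indices, while $\wG$ swaps the whole tensor factors; this is the structural fact you yourself single out. For $i\neq j$ you get
\[
\wG\wF(v_i^r\otimes v_j^r)=\wF\wG(v_i^r\otimes v_j^r)=pq^3\,v_j^r\otimes v_i^r,\qquad pq\,\wF(v_i^r\otimes v_j^r)=pq^3\,v_i^r\otimes v_j^r,
\]
and these are different basis vectors. So the relation $G_iF_i=F_iG_i=pqF_i$ of \eqref{G^2} is \emph{not} respected as soon as $m\geq2$. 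Already at $p=q=1$ the operators violate $G_iF_i=F_i$ on $v_1^1\otimes v_2^1$.

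Your last paragraph fails for the same reason. In Tanabe's setting the party generator tests equality of the \emph{whole} basis vectors in positions $i,i+1$, and that is exactly what makes the flip act trivially on its image. A projection onto equal colors is not Tanabe's operator.

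The paper's proof has the same hole. It takes \eqref{FGtilde} for granted and only checks the mixed relation in \eqref{P-H3} and the braid relation. So with $\wF,\wG$ as literally defined the statement cannot be proved, and no amount of care in your steps 2 and 3 will rescue it.

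\textbf{What a repair needs.} The operators themselves must be modified. One option:
\begin{itemize}
\item Let $\wF$ be $q^2$ times the projection onto pairs $v\otimes v$ of \emph{equal} basis vectors.
\item Let $\wG(v\otimes w)=c(v,w)\,w\otimes v$, with $c(v,v)=pq$ and $c(v,w)c(w,v)=pq^2$ for $v\neq w$. For example, keep the paper's scalars when $r\neq s$, and when $r=s$ use $pq$ or $q$ according as $i>j$ or $i<j$.
\end{itemize}
With this choice:
\begin{itemize}
\item Your step 1 becomes a correct two-line check.
\item Your step 2 goes through verbatim, with ``equal vectors'' in place of ``equal colors''.
\item At $p=1$ you genuinely get $q$ times the flip and $q^2$ times Tanabe's projection.
\end{itemize}

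\textbf{Your step 3 is a better argument than the paper's.} The paper splits into cases (i)--(v) for the upper indices and (a)--(e) for the lower ones, computes a few, and declares the rest analogous. You observe instead that any monomial operator $v\otimes w\mapsto c(v,w)\,w\otimes v$ satisfies the braid relation, because both reduced words produce $c(u_1,u_2)c(u_1,u_3)c(u_2,u_3)$. This settles all cases at once and does not depend on the particular scalars, which is why it survives the correction above.
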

\begin{proof}
Without loss of generality, we can assume $n = 3$. It suffices to show that the assignment respects the defining relations of $\Pa_n( p,q)$, namely \eqref{G^2}--\eqref{P-H3}.

From \eqref{FGtilde} we deduce that relations \eqref{G^2} and \eqref{P-H2} hold for the operators $\wG_i$'s and $\wF_i$'s. Together with \eqref{ijkltilde}, this shows that the only relations left to be verified are the braid relation in \eqref{P-H1} and the second relation in \eqref{P-H3}.

We begin with the second relation in \eqref{P-H3}. It is enough to prove that, for every basis element $v_{ijk}^{rst}$ of $V^{\otimes3}$, one has:\begin{equation}\label{EqTR}\big(\wF_1\wG_2\wG_1\big)(v_{ijk}^{rst})=\big(\wG_2\wG_1\wF_2\big)(v_{ijk}^{rst}).\end{equation}
If all upper indices of $v_{ijk}^{rst}$ are equal, that is, $r=s=t$, then both $\wF_1$ and $\wF_2$ act as scalar multiplication by $q^2$, and so \eqref{EqTR} is immediately satisfied. If instead all upper indices are distinct, both $\wF_1$ and $\wF_2$ act as zero, and again \eqref{EqTR} holds. Thus, we only need to consider the three remaining cases: $r=s\neq t$, $r\neq s=t$ and $r=t\neq s$.

In the first case we compute\[\big(\wF_1\wG_2\wG_1\big)(v_{ijk}^{rrt})=\big(\wF_1\wG_2\big)(pqv_{jik}^{rrt})=pq\wF_1\big(\wG_2(v_{jik}^{rrt})\big)=0,\]
 since $\wG_2(v_{jik}^{rrt})$ has upper indices of type $rtr$, on which $\tilde{F}_1$ acts as zero. On the other hand,\[\big(\wG_2\wG_1\wF_2\big)(v_{ijk}^{rrt})=\big(\wG_2\wG_1\big)(0)=0.\]Therefore \eqref{EqTR} holds in this case. The other two cases are analogous. We conclude that the second relation in \eqref{P-H3} is satisfied.

It remains to check the braid relation in \eqref{P-H1}, that is,\[\big(\wG_1\wG_2\wG_1\big)(v_{ijk}^{rst})=\big(\wG_2\wG_1\wG_2\big)(v_{ijk}^{rst}).\]We proceed by distinguishing cases according to the upper indices $(r,s,t)$: (i) $r=s=t$, (ii) $r=s\neq t$, (iii) $r\neq s=t$, (iv) $r=t\neq s$, and (v) $r,s,t$ all distinct.

\underline{Case (i)}. A direct calculation gives $(G_1G_2G_1)(v_{ijk}^{rrr})=(G_2G_1G_2)(v_{ijk}^{rrr})=p^3q^3\,v_{kji}^{rrr}$.

\underline{Case (ii)}. We subdivide into five types of lower indices $(i,j,k)$ into the following five types: (a) $i=j=k$, (b) $i\neq j\neq k\neq i$, (c) $i=j\neq k$, $(d)$ $i\neq j=k$, and $(e)$ $i=k\neq j$.

For type (a), one obtains
\begin{align*}
(G_1G_2G_1)(v_{iii}^{rrt})&=(G_1G_2)(pq\,v_{iii}^{rrt})=pq^2\sqrt{p}\,G_1(v_{iii}^{rtr})=q^3p^2\,v_{iii}^{trr},\\
(G_2G_1G_2)(v_{iii}^{rrt})&=(G_2G_1)(q\sqrt{p}\,v_{iii}^{rtr})=q^2p G_2(v_{iii}^{trr})=q^3p^2 v_{iii}^{trr}.
\end{align*}
For type (b), we only analyze the most representative case, that is, $i<j, j>k$, and $i<k$:
\begin{align*}
(G_1G_2G_1)(v_{ijk}^{rrt})&=(G_1G_2)(pq\, v_{jik}^{rrt}) = p\sqrt{p}q^2 \, G_1 ( v_{jki}^{rtr})= p^2q^3\,v_{kji}^{trr} ,\\
(G_2G_1G_2)(v_{ijk}^{rrt})&=(G_2G_1)(q\sqrt{p}\,v_{ikj}^{rtr}) = pq^2 G_2( v_{kij}^{trr})
= p^2q^3v_{kji}^{trr}.
\end{align*}
 For type (c), we will first analyze the possibility $i=j<k$:
\begin{align*}
(G_1G_2G_1)(v_{iik}^{rrt})& =pq\,(G_1G_2)(v_{iik}^{rrt}) = pq^2 \, G_1 ( v_{iki}^{rtr})= pq^3\,v_{
kii}^{trr} ,\\
(G_2G_1G_2)(v_{iik}^{rrt})& =q(G_2G_1)(v_{iki}^{rtr}) =q^2 G_2( v_{kii}^{trr})
= pq^3v_{kii}^{trr}.
\end{align*}
The other possibility, $i=j>k$, is again a direct computation.

Cases (iii) (iv) are both analogous to case (ii).

\underline{Case (v)}. Here all five types (a) to (e) of lower indices occur. Most are straightforward, except for the subcase $i<j>k<i$ in (b), which we compute in detail:
\begin{align*}
(G_1G_2G_1)(v_{ijk}^{rst}) = q(G_1G_2)(v_{jik}^{srt}) = pq^2 \, G_1 ( v_{jki}^{str})= p^2q^3\,v_{
kji}^{tsr} ,\\
(G_2G_1G_2)(v_{ijk}^{rst}) =pq\,(G_2G_1)(v_{ikj}^{rts}) = p^2q^2\,G_2( v_{kij}^{trs})
= p^2q^3v_{kji}^{tsr}.
\end{align*}
Thus, the relation holds in the most delicate situation, and therefore in all others as well.

Hence, the assignment $G_i\mapsto\wG_i$, $F_i\mapsto \wF_i$ defines a representation of $\Pa_n(p,q)$.
\end{proof}

\begin{theorem}\label{Gnbasis}
For $n\leq m^2$, the representation $\psi_{p,q}$ is faithful. Consequently, $\G_n$ is a linear basis for $\Pa_n(p,q)$, and so $\dim\Pa_n(p,q)=|\P_n|$.
\end{theorem}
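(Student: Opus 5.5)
By Proposition~\ref{spanP-H}, $\G_n$ spans $\Pa_n(p,q)$, and by Remark~\ref{coprimeBElems} it has exactly $|\P_n|$ distinct elements. It is therefore enough to show that the images $\psi_{p,q}(F_IG_s)$, with $(I,s)$ coprime, are linearly independent in $\End(V^{\otimes n})$. This gives linear independence of $\G_n$, hence that $\G_n$ is a basis and $\dim\Pa_n(p,q)=|\P_n|$. Faithfulness of $\psi_{p,q}$ then follows, because $\psi_{p,q}$ is injective on a basis whose images are independent.

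\textbf{Step 1: compute the images.} By Remark~\ref{UpToS}, for each basis vector $v_J^R$ we have $\psi_{p,q}(G_s)(v_J^R)=\lambda\, v_{s(J)}^{s(R)}$ with $\lambda\neq0$, since $p,q\neq0$. Conjugating $\wF_{j-1}$ by the $\wG$'s gives
\[
\psi_{p,q}(F_{i,j})(v_J^R)=\begin{cases} q^2v_J^R & \text{if } r_i=r_j,\\ 0 & \text{otherwise.}\end{cases}
\]
Consequently $\psi_{p,q}(F_I)$ is, up to a power of $q$, the projection onto the span of those $v_J^R$ whose upper index $R$ is constant on every block of $I$. Hence $\psi_{p,q}(F_IG_s)$ sends $v_J^R$ to a nonzero multiple of $v_{s(J)}^{s(R)}$ when $s(R)$ is constant on the blocks of $I$, and to $0$ otherwise.

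\textbf{Step 2: test vectors.} Since $n\le m^2$, we may choose the lower and upper indices so that the $n$ pairs $(j_k,r_k)$ are pairwise distinct. Then the position permutation can be read off from the output vector. Suppose $\sum_{(I,s)}c_{I,s}\,\psi_{p,q}(F_IG_s)=0$. Among the pairs with $c_{I,s}\neq0$, pick $(I_0,s_0)$ with $I_0$ minimal in the order $\preceq$, so that $I_0$ has the fewest merges. Then choose a test vector $v_J^R$ with the following properties:
\begin{itemize}
\item[(a)] $s_0(R)$ is constant exactly on the blocks of $I_0$, using distinct colours for distinct blocks;
\item[(b)] the lower indices $J$ are as described in Step 3 below.
\end{itemize}

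\textbf{Step 3: the obstacle and how to handle it.} The main obstacle is that two different pairs $(I_0,s)$ and $(I_0,s')$ with the same partition send $v_J^R$ into overlapping output vectors. This can happen when $s^{-1}s'$ permutes positions within blocks of $I_0$ and those positions carry equal pairs $(j,r)$. To avoid this, within each block of $I_0$ (which shares one colour) I would take the lower indices pairwise distinct. This needs at most $m$ values per block, but a block may have more than $m$ elements. In that case I would spread a block over several colours, which forces enlarging $m$, or else argue more carefully.

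A cleaner route uses the count $n\le m^2$ directly: assign distinct pairs $(j,r)$ to all positions, grouping the positions of each block of $I_0$ under a common upper index whenever this is possible. With this choice, the output $v_{s(J)}^{s(R)}$ determines $s$ uniquely. Coprimality of $(I_0,s)$ is what makes the vectors for different $s$ in the same projection not collapse through the relation $F_{i,j}G_{i,j}=pqF_{i,j}$: for coprime $s$, no inversion lies inside a block, so all the relevant $s$ are distinct minimal coset representatives of $\Sfr_n/\Sfr_{I_0}$-type cosets.

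Every term with $I\not\succeq$-compatible with $R$ vanishes on $v_J^R$ by Step 1. Any remaining term with $I\neq I_0$ either vanishes or produces a different output vector. Reading off the coefficient of $v_{s_0(J)}^{s_0(R)}$ then gives $c_{I_0,s_0}=0$, a contradiction. Hence all coefficients vanish, which proves linear independence.

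I expect the careful choice of test vector in Step 3 (handling blocks larger than $m$, and the minimality argument over $\preceq$) to be the main difficulty. If the inequality $n\le m^2$ turns out not to be enough, I would first try to adjust $m$.
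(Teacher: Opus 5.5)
\textbf{There is a genuine gap in Step 3.} The warning sign is that coprimality is never actually used. Suppose you could choose, as you propose, a test vector whose positions carry pairwise distinct pairs $(j,r)$ while $s_0(R)$ is constant on the blocks of $I_0$. Then the output $v_{s(J)}^{s(R)}$ would determine $s$ completely. Your minimality argument would then show that \emph{all} operators $\psi_{p,q}(F_IG_s)$ with $(I,s)\in P_n\times\Sfr_n$ are linearly independent.

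That cannot happen for a representation. These $|P_n|\,n!$ elements span a space of dimension at most $|\P_n|$ (Propositions~\ref{PREspanP-H} and~\ref{spanP-H}). Already for $n=2$ one has $F_1G_1=pqF_1$, yet your recipe with the vector $v_1^1\otimes v_2^1$ separates $\psi_{p,q}(F_1G_1)$ from $pq\,\psi_{p,q}(F_1)$.

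What this really exposes is that the displayed operators violate \eqref{FGtilde} as soon as the lower indices differ:
$\wG\wF(v_1^1\otimes v_2^1)=pq^3\,v_2^1\otimes v_1^1\neq pq\,\wF(v_1^1\otimes v_2^1)$.
In any genuine representation of this flip-times-diagonal type, $G_{i,j}F_{i,j}=pqF_{i,j}$ (Lemma~\ref{DualGi}) forces $G_{i,j}$ to act as the scalar $pq$ on the image of $F_{i,j}$. So on basis vectors not killed by $F_{I_0}$, the positions in a block must carry identical labels. The output vector can then determine $s$ only modulo the Young subgroup $\Sfr_{I_0}$. Your distinct-pairs test vector, and your worry about blocks with more than $m$ elements, both live exactly in the region where the formulas are inconsistent.

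\textbf{The missing idea is to let coprimality do the work modulo $\Sfr_{I_0}$.}
\begin{enumerate}
\item Use a representation in which $\wF$ tests equality of the whole label. Treat each pair $(i,r)$ as a single label $x$; there are $m^2$ of them. Set $\wF(x\otimes y)=q^2\delta_{x,y}\,x\otimes y$ and $\wG(x\otimes y)=c(x,y)\,y\otimes x$, with $c(x,x)=pq$ and $c(x,y)c(y,x)=pq^2$ for $x\neq y$. Flip-times-diagonal maps satisfy the braid relation automatically, and the remaining relations are immediate. The faithfulness claim should be read for a representation of this kind.
\item Keep your choice of $(I_0,s_0)$ with $I_0$ minimal for $\preceq$. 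Pick labels $x$ such that $s_0x$ is constant exactly on the blocks of $I_0$ and distinct on distinct blocks. This needs at most $n\le m^2$ labels, so block sizes are irrelevant.
\item The coefficient of $v_{s_0x}$ then receives contributions only from pairs with $I\preceq I_0$, hence $I=I_0$ by minimality, and with $sx=s_0x$, i.e.\ $s\in\Sfr_{I_0}s_0$.
\item All these $I_0s$ are the same element of $\P_n$ by \eqref{PartyDual2}. By Proposition~\ref{partyNF}, only $(I_0,s_0)$ among them is coprime, so $c_{I_0,s_0}=0$.
\end{enumerate}

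\textbf{Comparison with the paper.} The paper evaluates at the single vector $v_1^1\otimes\cdots\otimes v_n^1$, on which every $\wF_I$ acts by a scalar. It therefore cannot separate $(I,w)$ from $(I',w)$; for instance, every pair $(I,1)$ is coprime. Your varying, $\preceq$-minimal test vector is precisely the ingredient that argument lacks, once it is run in a representation where it is legitimate.
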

\begin{proof}The proof proceeds by showing that $\G_n$ is a linear basis and that its image under $\psi_{p,q}$ remains linearly independent. By Proposition~\ref{spanP-H}, to prove that $\G_n$ is a basis, it suffices to show that $\G_n$ is linearly independent. Set $V$ to be the $\CC$-vector space with basis $B=\{v_i^r\mid i,r\in\dbrack{1,m}\}$, where $n\leq m^2$. Then $V^{\otimes n}$ has a basis $B^{\otimes n}$ consisting of vectors of the form:\[v_I^R:=v_{i_1}^{r_1}\otimes\cdots\otimes v_{i_n}^{r_n}.\]Suppose we have the linear combination:\[\sum_{(I,w)\atop\text{coprime}}\lambda_{(I,w)}G_w F_I=0.\]Then, applying $\psi_{p,q}$, we obtain
\begin{equation}\label{LinCom}
\sum_{(I,w)\atop\text{coprime}}\lambda_{(I,w)}\wG_w\wF_I=0.
\end{equation}
Put $v:= v'\otimes v''$, where $v'=v_1^1\otimes\cdots\otimes v_n^1$, $v''=v_m^m\otimes\cdots\otimes v_m^m$ with $m^2-n$ factors. Evaluating the linear combination \eqref{LinCom} at $v$, and observing that $\psi_{p,q}$ acts trivially on $v''$, we obtain:
\[0=\sum_{(I,w)\atop\text{coprime}}\lambda_{(I,w)}\wG_w\wF_I(v)\otimes v''=\sum_{(I,w)\atop\text{coprime}}\lambda_{(I,w)}a_I\wG_w(v)\otimes v'',\]where each $a_I$ is certain positive integer. Notice that the vectors $\wG_w(v)\otimes v''$ are elements of the basis $B^{\otimes n}$. Therefore, using Remark~\ref{coprimeBElems} and Remark~\ref{UpToS}, it follows that all coefficients $\lambda_{(I,w)}a_I$ must be $0$. Since each $a_I$ is positive, we conclude that $\lambda_{(I,w)}=0$ for all $(I,w)$.
\end{proof}

\subsection{Generic representations}\label{SubsecBasisGenRep}

As noted in \cite[Subsection~3.5]{OrSaSchZaAl2022}, by \cite[Corollary 9.4]{Steinberg2016}, the algebra $\CC[\P_n]=\Pa_n(1,1)$ is semisimple. Moreover, since $\CC$ is an algebraically closed field, applying the Wedderburn--Artin theorem (cf. \cite[Chapter 5]{ErdmannHolm2018}) yields that $\Pa_n(1,1)$ is split semisimple.

\begin{theorem}\label{GeSS}
The Party-Hecke algebra $\Pa_n(p,q)$ is generically semisimple; that is, there exists a nonempty Zariski open subset of $\CC^2$ for which $\Pa_n(p,q)$ is semisimple. 
\end{theorem}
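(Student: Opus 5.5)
We will use the standard argument that semisimplicity is an open condition in flat families, detected by the discriminant of the trace form. By Theorem~\ref{Gnbasis}, for every $(p,q)\in(\CC^\times)^2$ the set $\G_n=\{F_IG_s\}$, indexed by coprime pairs, i.e. by $\P_n$ via Proposition~\ref{partyNF}, is a basis of $\Pa_n(p,q)$. So $\dim\Pa_n(p,q)=|\P_n|$ is constant on the parameter space.

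The first step is to show that the structure constants are polynomials in the parameters. In the basis $\G_n$ they lie in $\CC[p^{\pm1},q^{\pm1}]$, and possibly in $\CC[p^{\pm1},q^{\pm1},\sqrt{p}]$ if one passes through $\psi_{p,q}$. The reason is that the reduction procedure in Propositions~\ref{PREspanP-H} and \ref{spanP-H} rewrites any product $(F_IG_s)(F_JG_t)$ using only the relations \eqref{G^2}--\eqref{P-H3}, the inverse formula \eqref{G^-1}, and Lemmas~\ref{DualGi}--\ref{spanLem2}. All of these have coefficients that are Laurent polynomials in $p,q$. Because $\G_n$ is a basis for every specialization, the resulting expansion is unique. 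Hence we obtain a single algebra $\mathcal A$ over $R=\CC[p^{\pm1},q^{\pm1}]$, free with basis $\G_n$, whose specialization at any $(p,q)\in(\CC^\times)^2$ is $\Pa_n(p,q)$.

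The second step is to consider the trace form. Let $\mathrm{Tr}$ be the regular trace of $\mathcal A$ over $R$, and set
\[D(p,q)=\det\big(\mathrm{Tr}(b\,b')\big)_{b,b'\in\G_n}\in R.\]
Over a field of characteristic zero, a finite-dimensional algebra whose regular trace form is nondegenerate is semisimple. This holds because the radical is nilpotent, so every element of the radical has zero trace against everything, and therefore lies in the kernel of the form. Consequently $\Pa_n(p,q)$ is semisimple on the Zariski open set $U=\{(p,q)\in(\CC^\times)^2: D(p,q)\neq0\}$, which is open in $\CC^2$.

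The main obstacle is showing $U\neq\emptyset$, i.e. that $D$ is not identically zero. For this we specialize to $(p,q)=(1,1)$, where $\Pa_n(1,1)=\CC[\P_n]$. This algebra is semisimple by \cite[Corollary 9.4]{Steinberg2016}, as recalled above. The converse direction is also needed: in characteristic zero a semisimple algebra is a product of matrix algebras, and on such an algebra the regular trace form is nondegenerate. Since $\G_n$ specializes at $(1,1)$ to the monoid basis $\P_n$, up to the normal form of Proposition~\ref{partyNF}, we get $D(1,1)\neq0$. Hence $U$ is a nonempty Zariski open subset, which proves the theorem.

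The only delicate point is the bookkeeping that the structure constants are genuinely polynomial, in particular handling the $\sqrt p$ that appears in $\psi_{p,q}$. We avoid it entirely by working with the abstract presentation rather than the representation. If needed, we can instead base-change to $\CC[p^{\pm1/2},q^{\pm1}]$; since the two-fold cover is finite, nonemptiness of the open set is unaffected.
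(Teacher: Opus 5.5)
Your proof is correct and follows essentially the paper's route: the structure is a free family of constant rank $|\P_n|$ (Theorem~\ref{Gnbasis}) whose specialization $\Pa_n(1,1)=\CC[\P_n]$ is semisimple, combined with openness of semisimplicity. The differences are that the paper cites \cite[Lemma 1.6]{CliParSco1999} for openness, while you prove it directly via the discriminant of the regular trace form, and that you (rightly) take the base ring to be $\CC[p^{\pm1},q^{\pm1}]$ rather than the paper's $\CC[p,q]$, since the basis elements $F_{i,j}$ involve $G_i^{-1}$ and Theorem~\ref{Gnbasis} is only available for $p,q\in\CC^\times$.
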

\begin{proof}
By \cite[Lemma 1.6]{CliParSco1999}, the property of being split semisimple is generic; that is, it holds on a nonempty Zariski open subset of the parameter space. As explained in \cite[Section 1]{CliParSco1999}, since $\Pa_n(p,q)$ is a free module of finite rank over $\CC[p,q]$ (Theorem~\ref{Gnbasis}), and since the specialization $\Pa_n(1,1)$ is split semisimple, it follows that there exists a nonempty Zariski open subset $U$ of $\CC^2$ such that $\Pa_n(p,q)$ is split semisimple for all $(p,q)\in U$. In particular, since $\CC$ is algebraically closed, then $\Pa_n(p,q)$ is generically semisimple.
\end{proof}

\section{The Party-Hecke algebra as a quotient of the bt-algebra and related quotients}\label{SecParty-HeckeQuo}

In this section, we establish that the Party-Hecke algebra is a quotient of the bt-algebra (Subsection~\ref{P'asQuot}). Furthermore, we determine two natural quotients of Temperley--Lieb type (Subsection~\ref{TwoTL}), and introduce a partition Temperley--Lieb-like quotient (Subsection~\ref{QuoPTL}).

\subsection{$\Pa_n'(p,q)$ as a quotient of $\E_n(\sqrt{p})$}\label{P'asQuot}

The rescaling $H_i=(q\sqrt{p})^{\,-1}G_i$ yields another presentation of the Party-Hecke algebra. Precisely, consider the generators $F_1,\ldots,F_{n-1}$ satisfying \eqref{P-H2}, together with generators $H_1,\ldots,H_{n-1}$ satisfying the same defining relations as in $\Pa_n(p,q)$, except that the relations in \eqref{G^2} are replaced, that is, \begin{align}
H_i^2=1+q^{-2}(p-1)F_i,&\quad H_iF_i=F_iH_i=\sqrt{p}F_i,\label{H^2}\\
 H_iH_j=H_jH_i\quad\text{if }|i-j|>1,&\quad H_iH_jH_i=H_jH_jH_i\quad\text{if }|i-j|=1,\label{HRels2}\\
 H_iF_j=F_jH_i\quad\text{if }|i-j|\neq1, &\quad H_iH_jF_i=F_jH_iH_j,\quad\text{if }|i-j|=1. \label{HRels3}
\end{align}
We denote by $\Pa_n'(p,q)$ the Party-Hecke algebra with the above presentation. Note that:\begin{equation}H_i^{-1}=H_i-q^{-2}(\sqrt{p}-\sqrt{p}^{\,-1})F_i.\label{H^-1}\end{equation}

By Proposition~\ref{TSn/R}, it is desirable to define a deformation of $\CC[\P_n]$ as a quotient of $\E_n(u,v)$. Owing to the definition of the congruence $R$, such a deformation can be obtained by deforming the defining generators of $R$. Thus, we consider the $x$-deformation given by $g_ie_i=xe_i$ for all $i$. Multiplying \eqref{bt5} by $e_i$, we obtain $x^2e_i=e_i+(u-1)e_i+(v-1)xe_i$. Therefore, the parameter $x$ must satisfy the quadratic equation $x^2-(v-1)x-u=0$. In particular, one solution is $x=u=v$, and so we make the following definition.

\begin{definition}
Set $\E_n^P(\sqrt{p}):=\E_n(\sqrt{p})/I$, where $I$ is the two-sided ideal generated by $g_ie_i-\sqrt{p}e_i$ for all $i$. Whenever no confusion arises, we will not distinguish between $g_i$ (resp. $e_i$) and their images in $\E_n^P(\sqrt{p})$.
\end{definition}

\begin{remark}
Note that $e_2g_2=(g_1g_2e_1g_2^{-1}g_1^{-1})g_2=g_1g_2e_1g_1g_2^{-1}g_1^{-1}=g_1g_2e_1g_2^{-1}g_1^{-1}=e_2$. Hence, an inductive argument shows that $I$ is generated by $g_1e_1-\sqrt{p}e_1$.
\end{remark}

\begin{proposition}\label{P-H/bt}
The map $g_i\mapsto H_i$, $e_i\mapsto q^{-2}F_i$ defines an isomorphism from $\E_n^P(\sqrt{p})$ to $\Pa_n'(p,q)$.
\end{proposition}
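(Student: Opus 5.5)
We construct mutually inverse algebra homomorphisms between the two presentations. Let $\phi:\E_n^P(\sqrt{p})\to\Pa_n'(p,q)$ be given by $g_i\mapsto H_i$, $e_i\mapsto q^{-2}F_i$, and let $\psi:\Pa_n'(p,q)\to\E_n^P(\sqrt{p})$ be given by $H_i\mapsto g_i$, $F_i\mapsto q^2e_i$. On generators these are clearly inverse to each other. The whole task is therefore to check that each assignment respects the defining relations of its source.

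\textbf{Step 1: $\phi$ is well defined.} Put $E_i:=q^{-2}F_i$. By \eqref{P-H2}, $E_i^2=q^{-4}q^2F_i=E_i$ and the $E_i$ commute, so \eqref{TSn2} holds. The braid relations \eqref{heckeRel1} are \eqref{HRels2}.

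For \eqref{bt3}, the commutation $H_iE_i=E_iH_i$ comes from \eqref{H^2}, and the far commutations come from \eqref{HRels3}. In \eqref{bt4}, the relation $H_iH_jE_i=E_jH_iH_j$ is \eqref{HRels3}. The triple relation $H_iE_jE_i=E_jH_iE_j=E_iE_jH_i$ follows from Lemma~\ref{BTRels}\eqref{TTD}, after rescaling $G_i=q\sqrt{p}H_i$ and $F_i=q^2E_i$; both sides are homogeneous of the same degree, so the scalars cancel.

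The quadratic relation \eqref{bt5} with $u=v=\sqrt{p}$ reads $g_i^2=1+(\sqrt{p}-1)e_i+(\sqrt{p}-1)e_ig_i$. Its image has right-hand side
\[1+(\sqrt{p}-1)E_i+(\sqrt{p}-1)\sqrt{p}E_i=1+(p-1)E_i=1+q^{-2}(p-1)F_i,\]
which equals $H_i^2$ by \eqref{H^2}. Finally, the ideal relation $g_ie_i=\sqrt{p}e_i$ maps to $H_iE_i=\sqrt{p}E_i$, again by \eqref{H^2}.

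\textbf{Step 2: $\psi$ is well defined.} In $\E_n^P(\sqrt{p})$ set $F_i':=q^2e_i$. Then $F_i'^2=q^2F_i'$ and the $F_i'$ commute. From $g_ie_i=\sqrt{p}e_i=e_ig_i$ we get $g_iF_i'=F_i'g_i=\sqrt{p}F_i'$, and
\[g_i^2=1+(\sqrt{p}-1)e_i+(\sqrt{p}-1)\sqrt{p}e_i=1+(p-1)e_i=1+q^{-2}(p-1)F_i',\]
which is \eqref{H^2}. The braid relations \eqref{HRels2} are \eqref{heckeRel1}. In \eqref{HRels3}, the far commutation is \eqref{bt3} and $g_ig_jF_i'=F_j'g_ig_j$ is \eqref{bt4}.

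\textbf{Step 3: conclusion.} Both maps are therefore algebra homomorphisms, and their composites fix every generator, so they are mutually inverse isomorphisms.

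The main obstacle is bookkeeping rather than any conceptual difficulty. One must confirm that the relations of $\E_n(u,v)$ not listed in the presentation of $\Pa_n'(p,q)$, namely the relation $g_ie_je_i=e_jg_ie_j=e_ig_ie_j$ in \eqref{bt4} and the commutation $g_ie_i=e_ig_i$, really follow in $\Pa_n'$. For the first we rely on Lemma~\ref{BTRels}, carefully carrying the rescalings through. One must also verify the scalar identity $(\sqrt{p}-1)(1+\sqrt{p})=p-1$, which is what makes the choice $x=u=v=\sqrt{p}$ consistent.
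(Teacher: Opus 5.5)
Your proposal is correct and follows essentially the same route as the paper: you check the defining relations of $\E_n^P(\sqrt{p})$ on $H_i$ and $q^{-2}F_i$, then invert on generators. The paper recomputes the tied relation in \eqref{bt4} by hand and only asserts that $H_i\mapsto g_i$, $F_i\mapsto q^2e_i$ is an inverse; you instead get that relation from Lemma~\ref{BTRels}\eqref{TTD} by rescaling, and you explicitly verify that the inverse assignment respects the relations of $\Pa_n'(p,q)$, which is what makes the bijectivity claim rigorous.
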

\begin{proof}
To show that the map defines a homomorphism of algebras, it suffices to check that the $H_i$'s and $F_i$'s satisfy the defining relations of $\E_n(u)$ together with $e_ig_i=ue_i$ for all $i$. These verifications follow directly, except for relations in \eqref{bt4} and \eqref{bt5}. Since $u=v=\sqrt{p}$, relation \eqref{bt5} becomes $g_i^2=1+(\sqrt{p}-1)e_i(1+g_i)$. Under our map, the left-hand side goes to $H_i^2=1+q^{-2}(p-1)F_i$, while the right-hand side goes to 
\[1+(\sqrt{p}-1)q^{-2}F_i(1+H_i) \stackrel{\eqref{H^2}}{=}1+q^{-2}(p-1)F_i.\]
 So, the relation \eqref{bt5} is preserved.

For the first relation in \eqref{bt4} we need to prove $H_iH_jF_i=F_jH_iH_j$ for $|i-j|>1$, or equivalently, $H_jF_iH_j^{-1}=H_i^{-1}F_jH_i$. Indeed,\begin{eqnarray*}
H_jF_iH_j^{-1}&\stackrel{\eqref{H^-1}}{=}& H_jF_i(H_j-q^{-2}(\sqrt{p}-\sqrt{p}^{\,-1})F_j)\\
&\stackrel{\eqref{BP2}}{=}&H_iF_jH_i-q^{-2}(\sqrt{p}-\sqrt{p}^{\,-1})H_jF_jF_i\\
&\stackrel{\eqref{H^2}}{=}&
H_iF_jH_i-q^{-2}(\sqrt{p}-\sqrt{p}^{\,-1})\sqrt{p}F_jF_i.
\end{eqnarray*}
Since $\sqrt{p}F_jF_i=F_j\sqrt{p}F_i \stackrel{\eqref{H^2}}{=}F_jF_i H_i=F_iF_jH_i$, then
\begin{eqnarray*}
H_jF_iH_j^{-1}&\stackrel{\eqref{H^-1}}{=} &H_iF_jH_i-q^{-2}(\sqrt{p}-\sqrt{p}^{\,-1}) F_iF_jH_i\\
&=&(H_i-q^{-2}(\sqrt{p}-\sqrt{p}^{\,-1})F_i)F_jH_i\,\,\,\stackrel{\eqref{H^-1}}{=}\,\,\,H_i^{-1}F_jH_i.
\end{eqnarray*}
So, the first relation in \eqref{bt4} is preserved.

For the second relation in \eqref{bt4}, we begin by pointing out that to prove this relation, it suffices to first show $H_iF_iF_j=F_jH_i F_j$ when $|i-j|=1$, or equivalently, $F_iF_j=H_i^{-1}F_jH_iF_j$. Indeed,
\begin{eqnarray*}
H_i^{-1}F_jH_iF_j&\stackrel{\eqref{H^-1}}{=}&(H_i-q^{-2}(\sqrt{p}-\sqrt{p}^{\,-1})F_i)F_jH_iF_j\\
&=&H_iF_jH_iF_j-q^{-2}(\sqrt{p}-\sqrt{p}^{\,-1})F_iF_jH_iF_j\\
&\stackrel{\eqref{BP2},\eqref{Par1}}{=}&
H_jF_iH_jF_j-q^{-2}(\sqrt{p}-\sqrt{p}^{\,-1})F_jF_iH_iF_j\\
&\stackrel{\eqref{H^2}}{=}&\sqrt{p}H_jF_iF_j-q^{-2}(\sqrt{p}-\sqrt{p}^{\,-1})\sqrt{p}F_jF_iF_j.
\end{eqnarray*}
Since $\sqrt{p}H_jF_iF_j=\sqrt{p}H_jF_jF_i\stackrel{\eqref{H^2}}{=}pF_iF_j$ and $F_jF_iF_j=q^{2}F_iF_j$, we obtain\[H_i^{-1}F_jH_iF_j=pF_iF_j-(\sqrt{p}-\sqrt{p}^{\,-1})\sqrt{p}F_iF_j=F_iF_j.\]Similarly, one proves $F_iF_jH_i=F_jH_iF_j$. So, the second relation in \eqref{bt4} is preserved.

In summary, the map is a homomorphism of algebras. Moreover, it is bijective, with inverse given by $H_i\mapsto g_i$ and $F_i\mapsto q^2e_i$. Hence, the proof is concluded.
\end{proof}

\subsection{Two Temperley--Lieb-like quotients}\label{TwoTL}

In this subsection, we use the presentation $\Pa_n'(p,q)$ of the Party-Hecke algebra. 
Define
\begin{equation}\label{defTi}T_i=\frac{1}{2}\Big(H_i+q^{-2}(1-\sqrt{p})F_i+1\Big)\in
\Pa_n'(p, q).
\end{equation}
A straightforward computation shows that $T_i^2=T_i$. We now have the following result.
\begin{proposition}\label{PreIdem}
The idempotents $T_1,\ldots,T_{n-1}$, together with $F_1,\ldots,F_{n-1}$ define a presentation $\Pa(q)$ of the Party-Hecke algebra, whose defining relations are:
\begin{align}
T_i^2=T_i\,,&\quad T_iT_j=T_jT_i\quad\text{for }|i-j|>1,\label{Idemp1}\\
4T_iT_jT_i- T_i &= 4T_jT_iT_j-T_j\quad\text{for }|i-j|=1,\label{Idemp2}\\
F_i^2=q^2F_i\,,&\quad F_iF_j=F_jF_i\,,\label{Idemp3}\\ 
T_iF_i=F_iT_i=F_i,&\quad T_iF_j=F_jT_i\quad\text{for }|i-j|>1,\label{Idemp4}\\
4T_iT_jF_i-2T_j F_i-F_i & = 4F_jT_iT_j-2F_j T_i -F_j\quad\text{for }|i-j|=1.\label{Idemp5}
\end{align} 
\end{proposition}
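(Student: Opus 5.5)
The plan is a Tietze-transformation argument. Start from the presentation $\Pa_n'(p,q)$ with generators $H_i,F_i$ and relations \eqref{P-H2}, \eqref{H^2}--\eqref{HRels3}. Definition \eqref{defTi} is invertible:
\[H_i=2T_i-1-q^{-2}(1-\sqrt{p})F_i .\]
So $\{T_i,F_i\}$ generates $\Pa_n'(p,q)$. It then suffices to show two things: each defining relation of $\Pa_n'(p,q)$, rewritten via this substitution, follows from \eqref{Idemp1}--\eqref{Idemp5}; and conversely each of \eqref{Idemp1}--\eqref{Idemp5} holds in $\Pa_n'(p,q)$.

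\emph{Local relations.} For these, set $c=q^{-2}(1-\sqrt{p})$ and use $F_i^2=q^2F_i$.
\begin{itemize}
\item From $H_iF_i=\sqrt{p}F_i$ we get $2T_iF_i=(1+\sqrt{p})F_i+cq^2F_i=2F_i$, so $T_iF_i=F_iT_i=F_i$.
\item Expanding $T_i^2$ and using $H_i^2=1+q^{-2}(p-1)F_i$ gives $T_i^2=T_i$.
\item Conversely, from $T_i^2=T_i$ and $T_iF_i=F_i$ one recovers \eqref{H^2}. The coefficient of $F_i$ in $H_i^2$ works out as $-4c+c^2q^2+2c=q^{-2}(p-1)$, since $c(cq^2-2)=q^{-2}(1-\sqrt{p})(-1-\sqrt{p})$ after simplification.
\item The far-commutation relations translate trivially in both directions, and \eqref{Idemp3} is literally \eqref{P-H2}.
\end{itemize}

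\emph{Relations for $|i-j|=1$.} The substantive work is showing that the braid relation $H_iH_jH_i=H_jH_iH_j$ together with \eqref{HRels3} is equivalent, modulo the local relations, to \eqref{Idemp2} and \eqref{Idemp5}.
\begin{itemize}
\item Write $H_i=2T_i-1-cF_i$ and expand $H_iH_jH_i-H_jH_iH_j$. The pure $T$-part gives $8(T_iT_jT_i-T_jT_iT_j)$, then $-4(T_iT_j+T_jT_i+\dots)$ cross terms, which cancel by symmetry, plus $2(T_i-T_j)$. This reproduces $2(4T_iT_jT_i-T_i-4T_jT_iT_j+T_j)$.
\item The remaining terms involve $F$'s. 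Simplify them using $T_iF_i=F_i$ and the already-established relation $F_jH_iH_j=H_iH_jF_i$, together with Lemma \ref{BTRels}\eqref{TTD}, which expresses $H_iF_jF_i=F_jH_iF_j=F_iF_jH_i$ in rescaled form. The aim is to show they cancel identically. Then the braid relation becomes exactly \eqref{Idemp2}.
\item Similarly, substituting into $H_iH_jF_i=F_jH_iH_j$ and using $T_jF_j=F_j$, $F_iF_j$ commutation and $T_iF_i=F_i$ should yield $4T_iT_jF_i-2T_jF_i-F_i$ on the left. The leftover terms are of the form $F_jF_i$, $T_jF_jF_i$, etc. These are symmetric in the two sides by \eqref{TTD}, leaving exactly \eqref{Idemp5}.
\end{itemize}

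The main obstacle is the bookkeeping of the mixed terms containing products like $F_iT_jF_i$, $T_iF_jT_i$ and $F_iF_jT_i$ in the braid expansion. For the converse direction, these must be shown to cancel or match using only \eqref{Idemp1}--\eqref{Idemp5}. For this, first derive from \eqref{Idemp5}, multiplied on the right by $F_j$ (or left by $F_i$) and combined with \eqref{Idemp4}, the analogue $T_iF_jF_i=F_jT_iF_j=F_iF_jT_i$ of Lemma \ref{BTRels}\eqref{TTD}. With that identity in hand, the $F$-containing terms in the braid expansion collapse symmetrically. Once both directions are checked, the two presentations define the same algebra.
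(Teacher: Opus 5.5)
Your strategy is the paper's: the invertible change of generators \eqref{defTi} followed by a relation-by-relation check, which the paper simply declares routine. Your local computations are correct, apart from a sign slip: the linear part of the pure-$T$ difference is $2(T_j-T_i)$, not $2(T_i-T_j)$, although your final expression is right. The gap is in the adjacent-index step, which is where all the content lies.

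\textbf{The identity you propose for the converse direction is false.} You claim $T_iF_jF_i=F_jT_iF_j=F_iF_jT_i$. The outer two terms already equal $F_iF_j$ by \eqref{Idemp3}--\eqref{Idemp4}. For the middle one, right-multiplying \eqref{Idemp5} by $F_j$ (exactly your suggestion) gives $4T_iF_jF_i-3F_iF_j=2F_jT_iF_j-q^2F_j$, that is,
\[
F_jT_iF_j=\tfrac12\bigl(F_iF_j+q^2F_j\bigr).
\]
This is also what $F_jH_iF_j=\sqrt{p}\,F_iF_j$ gives in $\Pa_n'(p,q)$. It is not $F_iF_j$. Indeed, putting $S_i=2T_i-1$, relations \eqref{Idemp1}--\eqref{Idemp5} become literally the party algebra relations \eqref{partyAlgRel1}--\eqref{partyAlgRel4} with $\delta=q^2$. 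In that algebra $F_j=f_j$ and $F_iF_j=f_if_j$ are distinct basis elements.

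Used as stated, your identity makes the bookkeeping fail rather than close. Take $c=q^{-2}(1-\sqrt p)$ and $H_i=S_i-cF_i$. Everything in $H_iH_jF_i-F_jH_iH_j$ cancels except $-c\,(F_iS_jF_i-F_jS_iF_j)$. Your identity evaluates this to $(1-\sqrt p)(F_i-F_j)$, which is nonzero for $p\neq1$. The correct values are $F_iS_jF_i=F_iF_j=F_jS_iF_j$, so the term vanishes.

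\textbf{The $T_iF_jT_i$ terms are not handled.} You single these out as the obstacle, but no identity you state covers them. In the braid expansion they appear as the $c$-linear terms $S_iF_jS_i$ versus $S_jF_iS_j$.
\begin{itemize}
\item In the converse direction you need $S_iF_jS_i=S_jF_iS_j$. This follows from \eqref{Idemp5}, which reads $S_iS_jF_i=F_jS_iS_j$: multiply on the left by $S_i$ and on the right by $S_j$, then use $S_i^2=S_j^2=1$.
\item In the forward direction you need $H_iF_jH_i=H_jF_iH_j$. Obtain it from $H_jH_iF_j=F_iH_jH_i$ by multiplying by $H_j$ on the left and $H_i$ on the right. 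Then use $H_k^2=1+q^{-2}(p-1)F_k$ and Lemma~\ref{BTRels}\eqref{TTD}; the two correction terms both equal $q^{-2}(p-1)p\,F_iF_j$ and cancel.
\end{itemize}

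\textbf{Suggested organisation.} Work throughout with $S_i$. In $\Pa_n'(p,q)$ one has $S_i=H_i+cF_i$. Both directions then amount to checking that $H_i\leftrightarrow S_i-cF_i$ respects the relations. All mixed terms are disposed of by three identities:
\begin{itemize}
\item $S_iF_jF_i=F_iF_jS_i=F_iF_j$;
\item $F_jS_iF_j=F_iF_j$;
\item $S_iF_jS_i=S_jF_iS_j$.
\end{itemize}
In $\Pa_n'(p,q)$ these read $H_iF_jF_i=F_jH_iF_j=F_iF_jH_i=\sqrt{p}\,F_iF_j$ and $H_iF_jH_i=H_jF_iH_j$. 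As a by-product this shows $\Pa_n(p,q)\cong\P_n(q^2)$, which is why $p$ disappears from the presentation.
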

\begin{proof}
Since each $T_i$ is a linear combination of $1$, $H_i$ and $F_i$, it follows that the Party-Hecke algebra is linearly generated by the $T_i$'s and $F_i$'s. Moreover, it is routine to check that the defining relations of $\Pa_n'(p,q)$ correspond, in terms of $T_i$ and $F_i$, precisely to relations \eqref{Idemp1}--\eqref{Idemp5}. This completes the proof.
\end{proof} 
The presentation in Proposition~\ref{PreIdem} is analogous to one of the Iwahori--Hecke algebra that yields the Temperley--Lieb algebra as a quotient \cite{TL1971,Jones83}. It is therefore natural to define a Temperley--Lieb quotient of the Party-Hecke algebra via the presentation of Proposition~\ref{PreIdem}. In this way, two quotients arise, namely\[\Pa_n(q)/I\qquad \text{and}\qquad\Pa_n(q)/J,\]where $I$ (resp. $J$) denotes the two-sided ideal generated by 
\begin{equation}\label{IdealGens}4T_iT_jF_i-2T_jF_i-F_i\quad\text{(resp. }4T_iT_jT_i-T_i\text{)}\quad\text{if}\quad|i-j|=1.\quad\end{equation}

\begin{proposition}\label{HP/I}
The quotient $\Pa_n(q)/I$ is the algebra $\H_n(u)$ with $u=7\pm4\sqrt{3}$.
\end{proposition}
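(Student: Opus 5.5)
The plan is to show that the ideal $I$ already contains every $F_i$, so that $\Pa_n(q)/I$ is presented by the idempotents $T_i$ alone. The surviving relations should then be exactly the idempotent presentation \eqref{heckeRel3}--\eqref{heckeRel4} of the Iwahori--Hecke algebra, with $\delta^{-1}=1/4$. Throughout I assume $n\geq3$, so that every $i$ has some $j$ with $|i-j|=1$.

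\emph{Step 1: $F_i\in I$.} Fix $i$ and $j$ with $|i-j|=1$. In the quotient we have
\[
4T_iT_jF_i=2T_jF_i+F_i .
\]
\begin{itemize}
\item Multiply this on the left by $T_i$. Using $T_i^2=T_i$ and $T_iF_i=F_i$ from \eqref{Idemp4}, we get $4T_iT_jF_i=2T_iT_jF_i+F_i$, hence $2T_iT_jF_i=F_i$.
\item Substituting $4T_iT_jF_i=2F_i$ back into the original relation gives $F_i=2T_jF_i$.
\item Multiply this on the left by $T_j$ and use $T_j^2=T_j$. This gives $T_jF_i=2T_jF_i$, so $T_jF_i=0$.
\end{itemize}
Therefore $F_i=2T_jF_i=0$ in $\Pa_n(q)/I$, for every $i$.

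Conversely, every generator of $I$ listed in \eqref{IdealGens} is a multiple of some $F$. So $I$ is exactly the two-sided ideal generated by $F_1,\ldots,F_{n-1}$.

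\emph{Step 2: read off the presentation.} Setting $F_i=0$ in Proposition~\ref{PreIdem}, relations \eqref{Idemp3}--\eqref{Idemp5} become trivial. This uses that both sides of \eqref{Idemp5}, and all relations in \eqref{Idemp4}, are multiples of $F$'s. What remains is generators $T_i$ subject to \eqref{Idemp1} and
\[
T_iT_jT_i-\tfrac14T_i=T_jT_iT_j-\tfrac14T_j\quad\text{for }|i-j|=1 .
\]
This is precisely \eqref{heckeRel3}--\eqref{heckeRel4} with $\delta^{-1}=1/4$, i.e.\ $\delta=4$.

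\emph{Step 3: identify the parameter.} By the discussion following \eqref{heckeRel2}, the presentation with $h_i=(u+1)^{-1}(g_i+1)$ is that of $\H_n(u)$ whenever $\delta^2=(u+1)^2u^{-1}$. With $\delta=4$ this reads $u^2-14u+1=0$, whose roots are $u=7\pm4\sqrt3$. The identification is $T_i\mapsto h_i$, which is well defined with inverse $h_i\mapsto T_i$, since both sides are given by the same presentation.

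The main obstacle is Step 1, namely extracting $F_i=0$ from the single generator of $I$. The idempotency of $T_i$ and $T_j$, together with $T_iF_i=F_i$, is what makes this work. A secondary point to check is that Proposition~\ref{PreIdem} lists all defining relations, so that nothing involving $T$'s alone is lost when the $F$'s are killed.
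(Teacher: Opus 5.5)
Your Steps 1 and 2 follow the paper's argument exactly (multiply by $T_i$, substitute back, multiply by $T_j$ to get $T_jF_i=0$). Your additions are correct. The hypothesis $n\geq3$ is genuinely needed: for $n=2$ the ideal $I$ is zero. Also $I=\langle F_1,\ldots,F_{n-1}\rangle$, because each generator in \eqref{IdealGens} equals $(4T_iT_j-2T_j-1)F_i$.

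The gap is Step 3, and the paper's proof breaks at the same point. Both rely on \eqref{heckeRel4} as printed, which has the wrong power of $\delta$. Substituting $g_i=(u+1)h_i-1$ into $g_ig_jg_i=g_jg_ig_j$ and using $h_i^2=h_i$ gives
\[
(u+1)^3\,(h_ih_jh_i-h_jh_ih_j)=u(u+1)\,(h_i-h_j).
\]
So the coefficient is $u(u+1)^{-2}=\delta^{-2}$, not $\delta^{-1}$; only $\delta^{-2}$ makes $t_i=\delta h_i$ satisfy $t_it_jt_i=t_i$. Matching $u(u+1)^{-2}$ with the $\tfrac14$ in \eqref{Idemp2} gives $(u-1)^2=0$, i.e.\ $u=1$, not $u^2-14u+1=0$.

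Concretely, your map $T_i\mapsto h_i$ into $\H_n(7\pm4\sqrt3)$ is not a homomorphism. There $(u+1)^2=16u$, so $h_ih_jh_i-h_jh_ih_j=\tfrac1{16}(h_i-h_j)$. Imposing \eqref{Idemp2} would then force $h_i=h_j$, i.e.\ $g_i=g_j$.

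You can see the correct conclusion directly, without Proposition~\ref{PreIdem}. Once $F_i=0$, relation \eqref{H^2} becomes $H_i^2=1$. The quotient is then presented by involutions $H_i$ satisfying the braid relations, so it is $\CC[\Sfr_n]=\H_n(1)$, and $T_i=\tfrac12(1+H_i)$ are exactly the $u=1$ idempotents.

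The proposition with $u=7\pm4\sqrt3$ holds only as an abstract isomorphism. That $u$ is not a root of unity, so $\H_n(u)$ is semisimple and hence isomorphic to $\CC[\Sfr_n]$ by Tits' deformation theorem. That is a different argument, and it does not single out any particular value of $u$.
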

\begin{proof}
Multiplying the equation $4T_iT_jF_i=2T_jF_i+F_i$ on the left by $T_i$, and using \eqref{Idemp1} and \eqref{Idemp4}, we obtain $4T_iT_jF_i=2T_iT_jF_i+F_i$. Hence, $2T_iT_jF_i=F_i$. Substituting this result into the original equation $4T_iT_jF_i=2T_jF_i+F_i$, we find that $2T_jF_i=F_i$. Multiplying this equation on the left by $T_j$ gives $T_jF_i=0$, and hence $F_i=0$. By \eqref{heckeRel3} and \eqref{heckeRel4}, this shows that $\H_n(u)$ is obtained with $16=(u+1)^2u^{-1}$, that is, $u=7\pm4\sqrt{3}$.
\end{proof}

\begin{proposition}\label{HP/J}
The quotient $\Pa_n(q)/J$ is the Temperley--Lieb algebra $TL_n(4)$, that is, the algebra presented by generators $T_1,\ldots,T_{n-1}$ subject to \eqref{Idemp1} together with the relations:\begin{equation} 4T_iT_jT_i=T_i\quad\text{for }|i-j|=1.\label{TempL}\end{equation}
\end{proposition}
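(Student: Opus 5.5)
The plan is to mirror the proof of Proposition~\ref{HP/I}: show that in $\Pa_n(q)/J$ every $F_i$ vanishes, so only the $T_i$'s survive, subject to \eqref{Idemp1} and \eqref{TempL}.

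First, in the quotient we have $4T_iT_jT_i=T_i$ and $4T_jT_iT_j=T_j$ for $|i-j|=1$, so \eqref{Idemp2} holds automatically. Next, rewrite \eqref{Idemp5} as
\[4T_iT_jF_i-2T_jF_i-F_i=4F_jT_iT_j-2F_jT_i-F_j .\]
Multiply on the left by $T_i$. By \eqref{Idemp4}, $T_iF_i=F_i$, and by \eqref{Idemp1}, $T_i^2=T_i$. The left side becomes $2T_iT_jF_i-F_i$.

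Now use $F_i=T_iF_i$ and $F_i=F_iT_i$ to insert extra idempotents. Write $T_iT_jF_i=T_iT_jT_iF_i$. By \eqref{TempL} this equals $\tfrac14 T_iF_i=\tfrac14F_i$. Similarly, $T_jF_i=T_jT_iF_i$, and $T_iT_jF_i=\tfrac14F_i$ gives information on the products on both sides.

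Substituting into the original relation \eqref{Idemp5}, the left side becomes $F_i-2T_jF_i-F_i=-2T_jF_i$. The right side, computed symmetrically with $F_j=F_jT_j$, gives $F_jT_iT_j=F_jT_jT_iT_j=\tfrac14F_j$, so it becomes $-2F_jT_i$. Hence $T_jF_i=F_jT_i$.

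Multiply this on the left by $T_i$ and use $T_iT_jF_i=\tfrac14F_i$ to get $\tfrac14F_i=T_iF_jT_i$. Write $T_iF_jT_i=T_iF_jT_jT_i$ and commute $F_j$ past where possible. The goal is to derive an equation forcing $F_i=0$, for example $F_i=cF_i$ with $c\neq1$.

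I expect this elimination of $F_i$ to be the main obstacle. The most promising route is to combine $F_i=4T_iT_jF_i$ with $T_jF_i=F_jT_i$. This gives $F_i=4T_iF_jT_i$. Then $F_i^2$ can be computed in two ways using $F_i^2=q^2F_i$ and $T_iF_i=F_i$, which should produce a scalar identity such as $q^2F_i=4q^2F_i$, hence $F_i=0$.

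Once all $F_i=0$, the relations \eqref{Idemp3}--\eqref{Idemp5} become trivial. The remaining presentation is exactly \eqref{Idemp1} together with \eqref{TempL}. Setting $t_i=4T_i$ recovers $t_i^2=4t_i$ and $t_it_jt_i=t_i$, which is the Temperley--Lieb algebra $TL_n(4)$.
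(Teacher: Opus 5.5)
Up to the identity $F_i=4T_iF_jT_i$ your argument is exactly the paper's. Your derivation of $T_jF_i=F_jT_i$ is in fact more precise than the paper's wording, since you obtain $F_jT_iT_j=\tfrac14F_j$ by multiplying $4T_jT_iT_j=T_j$ on the \emph{left} by $F_j$. The gap is the decisive step $F_i=0$, which you only predict. The relations you say you will use there are $F_i^2=q^2F_i$, $T_iF_i=F_iT_i=F_i$ and $F_i=4T_iF_jT_i$. These cannot force $F_i=0$, even together with $T_i^2=T_i$ and $F_iF_j=F_jF_i$: in $\CC$, the values $T_i=1$, $F_i=q^2$, $F_j=q^2/4$ satisfy all of them. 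Correspondingly, expanding $F_i^2=(4T_iF_jT_i)F_i$ yields only $q^2F_i=4F_iF_j$. That relates $F_i$ to $F_iF_j$, but it is not the scalar identity $q^2F_i=4q^2F_i$ you anticipate.

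What is missing is an input about $F_j$ itself. Your ``two ways'' idea does go through once you also use $T_jF_i=F_jT_i$ (which you derived) and $T_jF_j=F_j$. These give $F_jT_iF_j=T_jF_iF_j=T_jF_jF_i=F_iF_j$, hence
\[
q^2F_i=F_i^2=(4T_iF_jT_i)F_i=4F_iF_j\qquad\text{and}\qquad F_i^2=(4T_iF_jT_i)^2=16\,T_i(F_jT_iF_j)T_i=16F_iF_j.
\]
So $F_iF_j=0$, then $q^2F_i=4F_iF_j=0$ and $F_i=0$. Alternatively, right-multiply $q^2F_i=4F_iF_j$ by $F_j$ and use $F_j^2=q^2F_j$ to get $3q^2F_iF_j=0$. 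The paper instead sandwiches both $F_i=4T_iF_jT_i$ and its mirror $F_j=4T_jF_iT_j$ and compares the two resulting expressions for $F_iF_j$. Every version needs a neighbour $j$ of $i$, i.e.\ $n\geq3$. For $n=2$ the ideal $J$ is zero, so the quotient is $3$-dimensional while the target is $2$-dimensional. Finally, your rescaling is off: $t_i=4T_i$ gives $t_it_jt_i=64T_iT_jT_i=16T_i=4t_i$. It is $t_i=2T_i$ that satisfies $t_i^2=2t_i$ and $t_it_jt_i=t_i$. Since the statement defines its target by \eqref{Idemp1} and \eqref{TempL}, this slip does not affect the main claim.
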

\begin{proof}
Multiplying the equation $4T_iT_jT_i=T_i$ (resp. $4T_jT_iT_j=T_j$) on the right by $F_i$ (resp. $F_j$) and using \eqref{Idemp4}, we get $4T_iT_jF_i=F_i$ (resp. $4T_jT_iF_j=F_j$). Using these two relations in \eqref{Idemp5}, we obtain\begin{equation}\label{EqHP/I}T_jF_i=F_jT_i,\quad\text{for }|i-j|=1.\end{equation}On the other hand, multiplying again $4T_iT_jT_i=T_i$ (resp. $4T_jT_iT_j=T_j$) on the right (resp. on the left) by $F_i$ (resp. $F_j$), we obtain $4T_iT_jF_i=F_i$ (resp. $4F_jT_iT_j = F_j$). Now, using 
\eqref{EqHP/I}, we deduce $4T_iF_jT_i=F_i$ (resp. $4T_jF_iT_j=F_j$). Multiplying these last equations on the right and left by $F_i$ (resp. $F_j$), we get $4F_iF_j=F_i$ (resp. $4F_iF_j=F_j$). Hence $16F_iF_j=F_iF_j$ by \eqref{Idemp3}. Thus $F_iF_j=0$, and therefore $F_i=0$ for all $i$. Consequently, the quotient is presented by the $T_i$'s with the defining relations \eqref{Idemp1} and \eqref{TempL}. Hence, by \eqref{heckeRel3} and \eqref{heckeRel4}, the proof follows. 
\end{proof}

\subsection{A partition Temperley--Lieb-like quotient}\label{QuoPTL}

Motivated by the definition of the partition Temperley--Lieb algebra \cite{Juyumaya13} (cf.~\cite{Ry-HaJPAA22}) and Proposition~\ref{P-H/bt}, it is natural to consider, for $n\geq3$, the quotient $\Pa_n(p,q)/\mathrm{F}$, where $\mathrm{F}$ is the two-sided ideal generated by the elements $F_iF_j$ with $|i-j|=1$. By Lemma~\ref{BTRels}(2), $\mathrm{F}$ is principal and generated by any $F_iF_{i+1}$; thus we may set $\mathrm{F}=\langle F_1F_2\rangle$.

Recall that $H_i=(q\sqrt{p})^{-1}G_i$. The elements $T_i$ defined in \eqref{defTi} can be written as a linear combination of $1$, $G_i$, and $F_i$. Therefore, the mapping 
$G_i\mapsto T_i$ and $F_i\mapsto F_i$ defines an automorphism of the Party-Hecke algebra that preserves the ideal $\mathrm{F}$. We define\[\PPa_n(q):=\Pa_n(q)/\mathrm{F}=\Pa_n(p,q)/\mathrm{F}.\]Using the \href{https://magma.maths.usyd.edu.au/calc/}{MAGMA} computer algebra system, we find that the dimensions of $\PPa_n(q)$ are $15$, $114$, $1170$, and $15570$ for $n=3,4,5$, and $6$, respectively. This suggests that the dimensions of $\PPa_n(q)$ correspond to the sequence \href{https://oeis.org/A346224}{A346224}.

\section{Future work}\label{Future}

The structural results established in this paper lay the groundwork for further research directions. In this section, we briefly outline two such directions: the representation theory of the related monoid $T\Sfr_n$, and the potential use of the Party-Hecke algebra to define a Jones-type invariant for virtual knots.

\subsection{Irreducible representations of $T\Sfr_n$}\label{RepTSn}

Just as the maximal subgroups of $T\Sfr_n$ are determined by the maximal subgroups of $\P_n$, it is natural to expect that the irreducible representations of $T\Sfr_n$ can be obtained by applying the methods in \cite[Section 3]{OrSaSchZaAl2022}. As a first step, we note that, similarly to the party monoid \cite[Proposition 3.5]{OrSaSchZaAl2022}, the $J$-classes of $T\Sfr_n$ are also indexed by the partitions of $n$. Indeed, for $e,e'\in P_n$, one has $e'=s(e)=ses^{-1}$ for some $s\in\Sfr_n$ if and only if $T\Sfr_ne'T\Sfr_n=T\Sfr_nses^{-1}T\Sfr_n=T\Sfr_neT\Sfr_n$, since $s$ is invertible. Thus, by Remark~\ref{orbitPartition},\[e\equiv_Je'\quad\text{if and only if}\quad\|e\|=\|e'\|.\]On the other hand, if $g=es\in T\Sfr_n$ with $e\in P_n$ and $s\in\Sfr_n$, then $g\equiv_Je$ since $T\Sfr_ngT\Sfr_n=T\Sfr_nesT\Sfr_n=T\Sfr_neT\Sfr_n$. Consequently, if $g=es$ and $h=e's'$ with $\|e\|=\|e'\|$, then $g\equiv_Jh$; conversely, if $g\equiv_Jh$, then $\|e\|=\|e'\|$ since $e\equiv_Jg\equiv_Jh\equiv_Je'$. Hence,\[g\equiv_Jh\quad\text{if and only if}\quad\|e\|=\|e'\|.\]This establishes the claimed bijection.

Following this observation, for each partition $\lambda$ of $n$, we write $J_\lambda$ for the $J$-class of any $es\in T\Sfr_n$ with $\|e\|=\lambda$. Consequently,\[T\Sfr_n=\bigsqcup_{\lambda\in\Par_n}J_\lambda,\]where $\Par_n$ denotes the set of partitions of $n$.

Furthermore, for $e\in P_n$ with $\|e\|=\lambda$, one can easily check that $J_\lambda=\{ses'\mid s,s'\in\Sfr_n\}$. To see this, note that for $s,s'\in\Sfr_n$, we have $T\Sfr_nses'T\Sfr_n=T\Sfr_neT\Sfr_n$, so $ses'\equiv_Je$ in $J_\lambda$. Conversely, if $e's'\in J_\lambda$ with $e'\in P_n$ and $s'\in\Sfr_n$, then there exists $s\in\Sfr_n$ such that $e'=ses^{-1}$, whence $e's'=se(s^{-1}s')$. See Figure~\ref{JClassesTS3} for an illustration of this structure.
\begin{figure}[H]\figtwe\caption{$J$-classes of $T\Sfr_3$.}\label{JClassesTS3}\end{figure}

\subsection{Jones-type invariants of virtual knots}\label{Virtual}

Because the Party-Hecke algebra is a quotient of the bt-algebra (Proposition~\ref{TBnQuotient}), which is indeed a knot algebra, one might wonder if the Party-Hecke algebra is also a knot algebra. In fact, we believe that the Party-Hecke algebra could be a knot algebra, as it could define a Jones-type invariant for virtual knots. This is based on the fact that the trace supported by the bt-algebra can be shown to pass to the Party-Hecke algebra (cf. \cite[Theorem 7.7]{Juyumaya13}, \cite[Section 5]{GoJuKoLa}). This property, together with the representation of the virtual braid group explained below, may enable the construction of such an invariant.

For $a,b\in \CC$, define $V_i = a H_i + b F_i$. A direct computation shows that the following relations hold in $\Pa_n'(p,q)$. 
\begin{enumerate}
\item $V_i^2=q^{-2}(pa^2+2\sqrt{p}q^2ab+q^4b^2-a^2)F_i+a^2$.
\item $V_iV_jV_i=V_jV_iV_j$, for $|i-j|=1$.
\item $V_iV_jH_i=H_jV_iV_j$, for $|i-j|=1$.
\end{enumerate}
Recall that the virtual braid group $V\!B_n$ is defined by the generators $\sigma_1,\ldots,\sigma_{n-1}$, which satisfy the braid relations \eqref{braidRels}, the generators $s_1,\ldots, s_{n-1}$, which satisfy the symmetric group relations \eqref{TSn1}, along with the following mixed relations:
\begin{equation}\label{VB1}
\sigma_is_j=s_j\sigma_i\quad\text{if }|i-j|> 1,\quad\text{and}\quad s_is_j\sigma_i=\sigma_js_is_j\quad \text{if }|i-j|= 1. 
\end{equation}
One proves 
the mapping $\sigma_i\mapsto H_i$, $s_i\mapsto V_i$ defines a representation of $V\!B_n$ in $\Pa_n'(p,q)$, if\[a=1\quad\text{and}\quad b=\frac{1\pm\sqrt{p}}{q^2},\quad\text{or}\quad a=-1\quad \text{and}\quad b=-\frac{1\pm\sqrt{p}}{q^2}.\]
}

\end{document}

%% file: pics/pics.tex
\usepackage[
	externalize=0,
	font=0.5,
	height=0.9,
    strwidth=0.5,
	width=0.4,
]{pics/strands20231225}

\newcommand{\aspdf}{1}
\newcommand{\vcdraw}[1]{\vcenter{\hbox{#1}}}
\newcommand{\type}{2} 
\usetikzlibrary{cd}

\newcommand{\figttn}{
	\centering
	\ifnum\aspdf=1$
        \vcdraw{\includegraphics{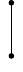}}\!\cdots\!
        \vcdraw{\includegraphics{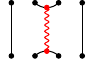}}\,\cdots\!
        \vcdraw{\includegraphics{pics/line.pdf}}
	$\else
		\begin{tikzpicture}        
        	\vpartition[type=0,tkzpic=0]{{1,-1},{2,3},{-2,-3},{4,-4}}
        	\tie[style=solid,snake=true,snakelen=3]{{2.5,0.09},{2.5,0.91}}
        \end{tikzpicture}
	\fi
}

\newcommand{\figtwe}{
	\centering
	\ifnum\aspdf=1\begin{gather*}
		J_{(1,1,1)}=\left\{\begin{array}{llllll}
        	\vcdraw{\includegraphics[scale=0.8]{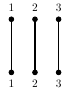}}\,,&
        	\vcdraw{\includegraphics[scale=0.8]{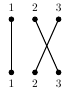}}\,,&
        	\vcdraw{\includegraphics[scale=0.8]{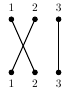}}\,,&
        	\vcdraw{\includegraphics[scale=0.8]{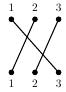}}\,,&
        	\vcdraw{\includegraphics[scale=0.8]{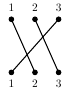}}\,,&
        	\vcdraw{\includegraphics[scale=0.8]{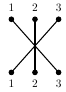}}
		\end{array}\right\},\\[0.2cm]
		J_{(2,1)}=\left\{\begin{array}{lllllllll}
			\vcdraw{\includegraphics[scale=0.8]{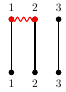}}\,,&
			\vcdraw{\includegraphics[scale=0.8]{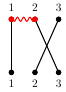}}\,,&
			\vcdraw{\includegraphics[scale=0.8]{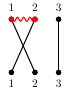}}\,,&
			\vcdraw{\includegraphics[scale=0.8]{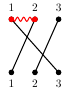}}\,,&
			\vcdraw{\includegraphics[scale=0.8]{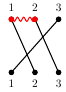}}\,,&
			\vcdraw{\includegraphics[scale=0.8]{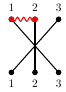}}\,,&
			\vcdraw{\includegraphics[scale=0.8]{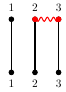}}\,,&
			\vcdraw{\includegraphics[scale=0.8]{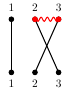}}\,,&
			\vcdraw{\includegraphics[scale=0.8]{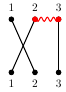}}\,,\\[0.6cm]
			\vcdraw{\includegraphics[scale=0.8]{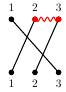}}\,,&
			\vcdraw{\includegraphics[scale=0.8]{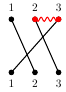}}\,,&
			\vcdraw{\includegraphics[scale=0.8]{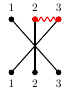}}\,,&
			\vcdraw{\includegraphics[scale=0.8]{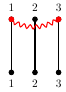}}\,,&
			\vcdraw{\includegraphics[scale=0.8]{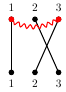}}\,,&
			\vcdraw{\includegraphics[scale=0.8]{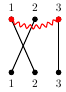}}\,,&
			\vcdraw{\includegraphics[scale=0.8]{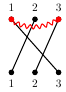}}\,,&
			\vcdraw{\includegraphics[scale=0.8]{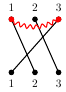}}\,,&
			\vcdraw{\includegraphics[scale=0.8]{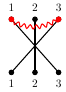}}
		\end{array}\right\},\\[0.2cm]
		J_{(3)}=\left\{\begin{array}{llllll}
        	\vcdraw{\includegraphics[scale=0.8]{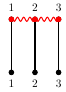}}\,,&
        	\vcdraw{\includegraphics[scale=0.8]{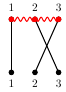}}\,,&
        	\vcdraw{\includegraphics[scale=0.8]{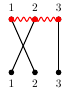}}\,,&
        	\vcdraw{\includegraphics[scale=0.8]{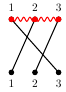}}\,,&
        	\vcdraw{\includegraphics[scale=0.8]{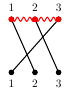}}\,,&
        	\vcdraw{\includegraphics[scale=0.8]{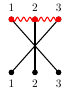}}
		\end{array}\right\}.
	\end{gather*}\else
		\begin{tikzpicture}
			\permutation[tkzpic=0]{1,2,3}
		\end{tikzpicture}
		\begin{tikzpicture}
			\permutation[tkzpic=0]{1,3,2}
		\end{tikzpicture}
		\begin{tikzpicture}
			\permutation[tkzpic=0]{2,1,3}
		\end{tikzpicture}
		\begin{tikzpicture}
			\permutation[tkzpic=0]{3,1,2}
		\end{tikzpicture}
		\begin{tikzpicture}
			\permutation[tkzpic=0]{2,3,1}
		\end{tikzpicture}
		\begin{tikzpicture}
			\permutation[tkzpic=0]{3,2,1}
		\end{tikzpicture}
		\begin{tikzpicture}
			\permutation[tkzpic=0]{1,2,3}
			\tie[style=solid,snake=true,snakelen=3]{{1,1},{2,1},{3,1}}
		\end{tikzpicture}
		\begin{tikzpicture}
			\permutation[tkzpic=0]{1,3,2}
			\tie[style=solid,snake=true,snakelen=3]{{1,1},{2,1},{3,1}}
		\end{tikzpicture}
		\begin{tikzpicture}
			\permutation[tkzpic=0]{2,1,3}
			\tie[style=solid,snake=true,snakelen=3]{{1,1},{2,1},{3,1}}
		\end{tikzpicture}
		\begin{tikzpicture}
			\permutation[tkzpic=0]{3,1,2}
			\tie[style=solid,snake=true,snakelen=3]{{1,1},{2,1},{3,1}}
		\end{tikzpicture}
		\begin{tikzpicture}
			\permutation[tkzpic=0]{2,3,1}
			\tie[style=solid,snake=true,snakelen=3]{{1,1},{2,1},{3,1}}
		\end{tikzpicture}
		\begin{tikzpicture}
			\permutation[tkzpic=0]{3,2,1}
			\tie[style=solid,snake=true,snakelen=3]{{1,1},{2,1},{3,1}}
		\end{tikzpicture}
		\begin{tikzpicture}
			\permutation[tkzpic=0]{1,2,3}
			\tie[style=solid,snake=true,snakelen=3]{{1,1},{2,1}}
		\end{tikzpicture}
		\begin{tikzpicture}
			\permutation[tkzpic=0]{1,3,2}
			\tie[style=solid,snake=true,snakelen=3]{{1,1},{2,1}}
		\end{tikzpicture}
		\begin{tikzpicture}
			\permutation[tkzpic=0]{2,1,3}
			\tie[style=solid,snake=true,snakelen=3]{{1,1},{2,1}}
		\end{tikzpicture}
		\begin{tikzpicture}
			\permutation[tkzpic=0]{3,1,2}
			\tie[style=solid,snake=true,snakelen=3]{{1,1},{2,1}}
		\end{tikzpicture}
		\begin{tikzpicture}
			\permutation[tkzpic=0]{2,3,1}
			\tie[style=solid,snake=true,snakelen=3]{{1,1},{2,1}}
		\end{tikzpicture}
		\begin{tikzpicture}
			\permutation[tkzpic=0]{3,2,1}
			\tie[style=solid,snake=true,snakelen=3]{{1,1},{2,1}}
		\end{tikzpicture}
		\begin{tikzpicture}
			\permutation[tkzpic=0]{1,2,3}
			\tie[style=solid,snake=true,snakelen=3]{{2,1},{3,1}}
		\end{tikzpicture}
		\begin{tikzpicture}
			\permutation[tkzpic=0]{1,3,2}
			\tie[style=solid,snake=true,snakelen=3]{{2,1},{3,1}}
		\end{tikzpicture}
		\begin{tikzpicture}
			\permutation[tkzpic=0]{2,1,3}
			\tie[style=solid,snake=true,snakelen=3]{{2,1},{3,1}}
		\end{tikzpicture}
		\begin{tikzpicture}
			\permutation[tkzpic=0]{3,1,2}
			\tie[style=solid,snake=true,snakelen=3]{{2,1},{3,1}}
		\end{tikzpicture}
		\begin{tikzpicture}
			\permutation[tkzpic=0]{2,3,1}
			\tie[style=solid,snake=true,snakelen=3]{{2,1},{3,1}}
		\end{tikzpicture}
		\begin{tikzpicture}
			\permutation[tkzpic=0]{3,2,1}
			\tie[style=solid,snake=true,snakelen=3]{{2,1},{3,1}}
		\end{tikzpicture}
		\begin{tikzpicture}
			\permutation[tkzpic=0]{1,2,3}
			\tie[style=solid,snake=true,snakelen=3,bend=-30]{{1,1},{3,1}}
		\end{tikzpicture}
		\begin{tikzpicture}
			\permutation[tkzpic=0]{1,3,2}
			\tie[style=solid,snake=true,snakelen=3,bend=-30]{{1,1},{3,1}}
		\end{tikzpicture}
		\begin{tikzpicture}
			\permutation[tkzpic=0]{2,1,3}
			\tie[style=solid,snake=true,snakelen=3,bend=-30]{{1,1},{3,1}}
		\end{tikzpicture}
		\begin{tikzpicture}
			\permutation[tkzpic=0]{3,1,2}
			\tie[style=solid,snake=true,snakelen=3,bend=-30]{{1,1},{3,1}}
		\end{tikzpicture}
		\begin{tikzpicture}
			\permutation[tkzpic=0]{2,3,1}
			\tie[style=solid,snake=true,snakelen=3,bend=-30]{{1,1},{3,1}}
		\end{tikzpicture}
		\begin{tikzpicture}
			\permutation[tkzpic=0]{3,2,1}
			\tie[style=solid,snake=true,snakelen=3,bend=-30]{{1,1},{3,1}}
		\end{tikzpicture}
	\fi
}

\newcommand{\figele}{
	\centering
	\ifnum\aspdf=1$
		\left(\,\,\,
        \vcdraw{\includegraphics{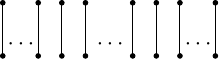}}
        \,\,,\,\,
        \vcdraw{\includegraphics{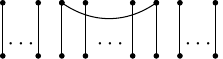}}
        \,\,\right)\,=\,
        \vcdraw{\includegraphics{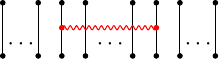}}
	$\else
		\begin{tikzpicture}
			\tie[style=solid,color=black]{{-0.5,0},{-0.5,1}}
			\tie[style=solid,color=black]{{1,0},{1,1}}
			\tie[style=solid,color=black]{{2,0},{2,1}}
			\tie[style=solid,color=black]{{3,0},{3,1}}
			\tie[style=solid,color=black]{{5,0},{5,1}}
			\tie[style=solid,color=black]{{6,0},{6,1}}
			\tie[style=solid,color=black]{{7,0},{7,1}}
			\tie[style=solid,color=black]{{8.5,0},{8.5,1}}
			\node at(-0.26,0.2){$\cdots$};
			\node at(1.25,0.2){$\cdots$};
			\node at(2.74,0.2){$\cdots$};
		\end{tikzpicture}
		\begin{tikzpicture}
			\tie[style=solid,color=black]{{-0.5,0},{-0.5,1}}
			\tie[style=solid,color=black]{{1,0},{1,1}}
			\tie[style=solid,color=black]{{2,0},{2,1}}
			\tie[style=solid,color=black]{{3,0},{3,1}}
			\tie[style=solid,color=black]{{5,0},{5,1}}
			\tie[style=solid,color=black]{{6,0},{6,1}}
			\tie[style=solid,color=black]{{7,0},{7,1}}
			\tie[style=solid,color=black]{{8.5,0},{8.5,1}}
			\tie[color=black,style=solid,bend=-35]{{2,1},{6,1}}
			\node at(-0.26,0.2){$\cdots$};
			\node at(1.25,0.2){$\cdots$};
			\node at(2.74,0.2){$\cdots$};
		\end{tikzpicture}
		\begin{tikzpicture}
			\tie[style=solid,color=black]{{-0.5,0},{-0.5,1}}
			\tie[style=solid,color=black]{{1,0},{1,1}}
			\tie[style=solid,color=black]{{2,0},{2,1}}
			\tie[style=solid,color=black]{{3,0},{3,1}}
			\tie[style=solid,color=black]{{5,0},{5,1}}
			\tie[style=solid,color=black]{{6,0},{6,1}}
			\tie[style=solid,color=black]{{7,0},{7,1}}
			\tie[style=solid,color=black]{{8.5,0},{8.5,1}}
			\tie[snake=true,snakelen=3,style=solid]{{2,0.53},{6,0.53}}
			\node at(-0.26,0.2){$\cdots$};
			\node at(1.25,0.2){$\cdots$};
			\node at(2.74,0.2){$\cdots$};
		\end{tikzpicture}
	\fi
}

\newcommand{\figten}{
	\centering
	\ifnum\aspdf=1$
        \vcdraw{\includegraphics{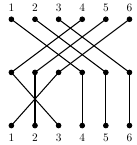}}
        \,\,\,=\,\,
        \vcdraw{\includegraphics{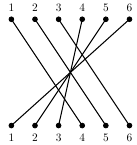}}
        \,\,\,=\,\,
        \vcdraw{\includegraphics{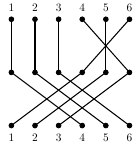}}
	$\else
		\begin{tikzpicture}
			\vpartition[type=1,tkzpic=0,floor=1]
			{{1,-4},{2,-5},{3,-6},{4,-1},{5,-2},{6,-3}}
			\vpartition[type=-1,tkzpic=0,bulla=0]
			{{1,-3},{2,-2},{3,-1},{4,-4},{5,-5},{6,-6}}
		\end{tikzpicture}
		\begin{tikzpicture}
			\vpartition[type=2,tkzpic=0,height=1.8]
			{{1,-4},{2,-5},{3,-6},{4,-3},{5,-2},{6,-1}}
		\end{tikzpicture}
		\begin{tikzpicture}
			\vpartition[type=1,tkzpic=0,floor=1]
			{{1,-1},{2,-2},{3,-3},{4,-6},{5,-5},{6,-4}}
			\vpartition[type=-1,tkzpic=0,bulla=0]
			{{1,-4},{2,-5},{3,-6},{4,-1},{5,-2},{6,-3}}
		\end{tikzpicture}
	\fi
}

\newcommand{\fignin}{
	\centering
	\ifnum\aspdf=1$\begin{array}{ccc}
        \quad\includegraphics{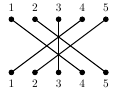}\quad&
        \quad\includegraphics{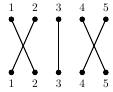}\quad&
        \quad\includegraphics{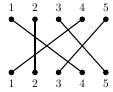}\quad\\
        s_{12|45}&s_{15|24}&s_{13|45}
	\end{array}$\else
		\vpartition[type=2]{{3,-3},{1,-4},{2,-5},{4,-1},{5,-2}}
		\vpartition[type=2]{{3,-3},{1,-2},{5,-4},{2,-1},{4,-5}}
		\vpartition[type=2]{{2,-2},{1,-4},{3,-5},{4,-1},{5,-3}}
	\fi
}

\newcommand{\figeig}{
	\centering
	\ifnum\aspdf=1$
            g\,\,=\vcdraw{\includegraphics{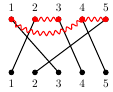}}\qquad\,\,
            g^*\,=\vcdraw{\includegraphics{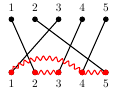}}
	$\else
		\begin{tikzpicture}
        	\vpartition[type=2,tkzpic=0]{{1,-3},{4,-5},{5,-2},{2,-1},{3,-4}}
			\tie[snake=true,snakelen=3,style=solid,bend=-40]{{1,1},{4,1}}
			\tie[snake=true,snakelen=3,style=solid]{{4,1},{5,1}}
			\tie[snake=true,snakelen=3,style=solid]{{2,1},{3,1}}
        \end{tikzpicture}
		\begin{tikzpicture}
        	\vpartition[type=2,tkzpic=0]{{-1,3},{-4,5},{-5,2},{-2,1},{-3,4}}
			\tie[snake=true,snakelen=3,style=solid,bend=40]{{1,0},{4,0}}
			\tie[snake=true,snakelen=3,style=solid]{{4,0},{5,0}}
			\tie[snake=true,snakelen=3,style=solid]{{2,0},{3,0}}
        \end{tikzpicture}
	\fi
}

\newcommand{\figsev}{
	\centering
	\ifnum\aspdf=1$
            g\,\,=\vcdraw{\includegraphics{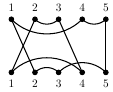}}\qquad\,\,
            g^*\,=\vcdraw{\includegraphics{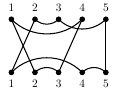}}
	$\else
        \vpartition[type=2]{{1,4,5,-5,-3,-2,1},{2,3,-4,-1,2}}
        \vpartition[type=2]{{-1,-4,-5,5,3,2,-1},{-2,-3,4,1,-2}}
	\fi
}

\newcommand{\figsix}{
	\centering
	\ifnum\aspdf=1$
        \vcdraw{\includegraphics{pics/line.pdf}}\!\cdots\!
        \vcdraw{\includegraphics{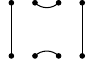}}\,\cdots\!
        \vcdraw{\includegraphics{pics/line.pdf}}
	$\else
        \vpartition[type=0]{{1,-1}}
        \vpartition[type=0]{{1,-1},{2,3},{-2,-3},{4,-4}}
	\fi
}

\newcommand{\figfiv}{
	\centering
	\ifnum\aspdf=1$
        \begin{array}{c}
            \includegraphics{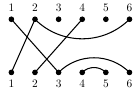}\\
            _{(\{1,9,12\},\{2,6,7\},\{3\},\{4,8\},\{5\},\{10,11\})}
        \end{array}
	$\else
        \vpartition[type=2]{{-1,2,6},{1,-3,-6},{4,-2},{-4,-5}}
	\fi
}

\newcommand{\figfou}{
	\centering
	\ifnum\aspdf=1$
        \vcdraw{\includegraphics{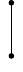}}\kern-0.25em\cdots\kern-0.25em
        \vcdraw{\includegraphics{pics/021.pdf}}\kern+0.12em
        \vcdraw{\includegraphics{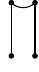}}\kern+0.38em
        \vcdraw{\includegraphics{pics/021.pdf}}\kern-0.25em\cdots\kern-0.25em
        \vcdraw{\includegraphics{pics/021.pdf}}
        \,\,=\,\,
        \vcdraw{\includegraphics{pics/021.pdf}}\kern-0.25em\cdots\kern-0.25em
        \vcdraw{\includegraphics{pics/021.pdf}}\kern+0.12em
        \vcdraw{\includegraphics{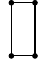}}\kern+0.38em
        \vcdraw{\includegraphics{pics/021.pdf}}\kern-0.25em\cdots\kern-0.25em
        \vcdraw{\includegraphics{pics/021.pdf}}
	$\else
        \vpartition[type=0]{{1,-1}}
        \vpartition[type=0]{{-1,1,2,-2}}
        \vpartition[type=0]{{-1,1,2,-2,-1}}
	\fi
}

\newcommand{\figthr}{
	\centering
	\ifnum\aspdf=1$
        \begin{array}{ccccccc}
            \vcdraw{\includegraphics{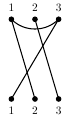}}&=&
            \vcdraw{\includegraphics{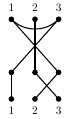}}&\stackrel{\eqref{PartyDual2}}{=}&
            \vcdraw{\includegraphics{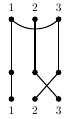}}&=&
            \vcdraw{\includegraphics{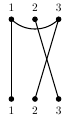}}\\[-0.1cm]
            \,\,\,_{f_{1,3}s_2s_1}&&\,\,_{f_{1,3}s_{1,3}s_2}&&\,\,_{f_{1,3}s_2}&&\,\,\,_{f_{1,3}s_2}
        \end{array}
	$\else$
        \begin{array}{ccccccc}
            \vcdraw{\vpartition[type=\type,height=1.35]{{-1,3,1,-2},{2,-3}}}&
            \to&
            \vcdraw{\begin{tikzpicture}
            \vpartition[type=1,tkzpic=0,floor=0.5]{{-1,3,1,-3},{2,-2}}
            \vpartition[type=-1,tkzpic=0,bulla=0,height=0.45]{{1,-1},{2,-3},{3,-2}}
            \end{tikzpicture}}&
            \to&
            \vcdraw{\begin{tikzpicture}
            \vpartition[type=1,tkzpic=0,floor=0.5]{{-1,1,3,-3},{2,-2}}
            \vpartition[type=-1,tkzpic=0,bulla=0,height=0.45]{{1,-1},{2,-3},{3,-2}}
            \end{tikzpicture}}&
            \to&
            \vcdraw{\vpartition[type=\type,height=1.35]{{-1,1,3,-2},{2,-3}}}\\[-0.1cm]
            \,\,\,_{f_{1,3}s_2s_1}&&\,\,_{f_{1,3}s_{1,3}s_2}&&\,\,_{f_{1,3}s_2}&&\,\,\,_{f_{1,3}s_2}
        \end{array}
	$\fi
}

\newcommand{\figtwo}{
	\centering
	\ifnum\aspdf=1$
        \begin{array}{cccccc}
            \includegraphics{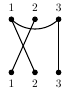}&
            \includegraphics{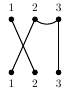}&
            \includegraphics{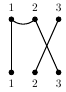}&
            \includegraphics{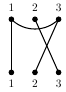}&
            \includegraphics{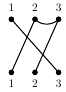}&
            \includegraphics{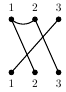}\\[-0.1cm]
            \,\,_{f_{1,3}s_1}&\,\,_{f_{2,3}s_1}&\,\,_{f_{1,2}s_2}&\,\,_{f_{1,3}s_2}&\,\,_{f_{2,3}s_1s_2}&\,\,_{f_{1,2}s_2s_1}
        \end{array}
	$\else$
        \begin{array}{cccccc}
            \vpartition[type=\type]{{-2,1,3,-3},{2,-1}}&
            \vpartition[type=\type]{{-1,2,3,-3},{1,-2}}&
            \vpartition[type=\type]{{-1,1,2,-3},{3,-2}}&
            \vpartition[type=\type]{{-1,1,3,-2},{2,-3}}&
            \vpartition[type=\type]{{1,-3},{-1,2,3,-2}}&
            \vpartition[type=\type]{{-2,1,2,-3},{3,-1}}\\[-0.1cm]
            \,\,_{f_{1,3}s_1}&\,\,_{f_{2,3}s_1}&\,\,_{f_{1,2}s_2}&\,\,_{f_{1,3}s_2}&\,\,_{f_{2,3}s_1s_2}&\,\,_{f_{1,2}s_2s_1}
        \end{array}
	$\fi
}

\newcommand{\figone}{
	\centering
	\ifnum\aspdf=1$
        \begin{array}{ccccc}
            \includegraphics{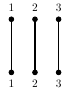}& 
            \includegraphics{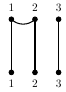}& 
            \includegraphics{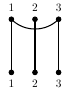}& 
            \includegraphics{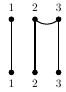}& 
            \includegraphics{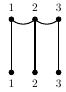}\\[-0.1cm] 
            \,\,_1&\,\,_{f_{1,2}}&\,\,_{f_{1,3}}&\,\,_{f_{2,3}}&\,\,_{f_{1,2}\,f_{2,3}}
        \end{array}
	$\else$
        \begin{array}{ccccc}
            \vpartition[type=\type]{{1,-1},{2,-2},{3,-3}}& 
            \vpartition[type=\type]{{-1,1,2,-2},{3,-3}}& 
            \vpartition[type=\type]{{2,-2},{-1,1,3,-3}}& 
            \vpartition[type=\type]{{1,-1},{-2,2,3,-3}}& 
            \vpartition[type=\type]{{-1,1,2,3,-3},{2,-2}}\\[-0.1cm] 
            \,\,_1&\,\,_{f_{1,2}}&\,\,_{f_{1,3}}&\,\,_{f_{2,3}}&\,\,_{f_{1,2}\,f_{2,3}}
        \end{array}
	$\fi\\[0.5cm]
}

\newcommand{\figzer}{
	\centering
	\ifnum\aspdf=1$
        \begin{array}{cccccc}
            \includegraphics{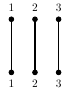}& 
            \includegraphics{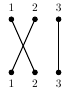}& 
            \includegraphics{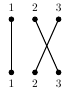}& 
            \includegraphics{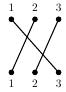}& 
            \includegraphics{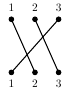}& 
            \includegraphics{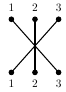}\\[-0.1cm] 
            \,\,_1&\,\,_{s_1}&\,\,_{s_2}&\,\,_{s_1s_2}&\,\,_{s_2s_1}&\,\,_{s_1s_2s_1}
        \end{array}
	$\else$
        \begin{array}{cccccc}
            \vpartition[type=\type]{{1,-1},{2,-2},{3,-3}}& 
            \vpartition[type=\type]{{1,-2},{2,-1},{3,-3}}& 
            \vpartition[type=\type]{{1,-1},{2,-3},{3,-2}}& 
            \vpartition[type=\type]{{1,-3},{2,-1},{3,-2}}& 
            \vpartition[type=\type]{{1,-2},{2,-3},{3,-1}}& 
            \vpartition[type=\type]{{1,-3},{2,-2},{3,-1}}\\[-0.1cm] 
            \,\,_1&\,\,_{s_1}&\,\,_{s_2}&\,\,_{s_1s_2}&\,\,_{s_2s_1}&\,\,_{s_1s_2s_1}
        \end{array}
	$\fi\\[0.5cm]
}